\documentclass[leqno,12pt]{amsart}

\newcommand{\barsymbolsetup}[1]{
	\newlength{\Xwidth}
	\settowidth{\Xwidth}{#1}
	\newlength{\Xheight}
	\settoheight{\Xheight}{#1}
}

\barsymbolsetup{$\mathcal{X}$}

\usepackage{enumerate}
\usepackage{amsmath}
\usepackage{mathrsfs}
\usepackage{color}
\usepackage{amssymb}

\usepackage{esint}

\usepackage[colorlinks=true,urlcolor=blue, citecolor=blue,linkcolor=blue,linktocpage,pdfpagelabels,bookmarksnumbered,bookmarksopen]{hyperref}
\usepackage{cancel} 

\numberwithin{equation}{section}

\newtheorem{theorem}{Theorem}[section]
\newtheorem{lemma}[theorem]{Lemma}
\newtheorem{corollary}[theorem]{Corollary}
\newtheorem{remark}[theorem]{Remark}
\newtheorem{definition}[theorem]{Definition}

\newtheorem{proposition}[theorem]{Proposition}

\newcommand{\R}{\mathbb{R}}

\begin{document}
	\title[Schatten--Lorentz properties of commutators with matrix weights]{Schatten--Lorentz properties of commutators with matrix weights}

\author[T. Hyt\"onen]{Tuomas Hyt\"onen}
\address{Aalto University, Department of Mathematics and Systems Analysis, P.O. Box 11100,
FI-00076 Aalto, Finland}
\email{tuomas.hytonen@aalto.fi}

\author[X. Quan]{Xingyan Quan}
\address{Aalto University, Department of Mathematics and Systems Analysis, P.O. Box 11100,
FI-00076 Aalto, Finland}
\email{quanxingyan@csu.edu.cn}

\thanks{T.H. was supported by the Research Council of Finland, grants 364208 and 371637. }

\subjclass[2020]{42B20, 42B35, 46E36, 47B10, 47B47.}

\keywords{Commutator, Schatten class, singular integral, matrix weight}

	\begin{abstract}
		We study the membership of commutators $[b,T]$ of Calder\'on--Zygmund
		operators in the Schatten--Lorentz classes in the matrix-weighted setting.
		Working on a space of homogeneous type $(X,\rho,\mu)$ with a matrix $A_2$
		weight $W$, we characterise, for $p\in(1,\infty)$ and $q\in[1,\infty)$, the
		Schatten--Lorentz norm $\|[b,T]\|_{S^{p,q}(L^2(W))}$ in terms of an
		oscillation (Besov-type) norm $\|b\|_{\mathrm{Osc}^{p,q}}$ of the symbol: for all $p\in(1,\infty)$, finiteness of $\|b\|_{\mathrm{Osc}^{p,q}}$
		is necessary; and for $p>p(\eta)$, where $p(\eta)$ depends on the regularity of the kernel of $T$ and a dimensional parameter of the space, the two-sided estimate
		$\|[b,T]\|_{S^{p,q}(L^2(W))}\approx\|b\|_{\mathrm{Osc}^{p,q}}$ holds.
		Extending a recent unified framework for unweighted and scalar-weighted versions of such results by one of us to the matrix-weighted setting, our approach combines the dyadic representation of singular integral operators with the nearly weakly orthogonal (NWO) sequences of Rochberg and
		Semmes.
	\end{abstract}

\maketitle

	\section{Introduction}

 Let $T$ be a singular integral operator and let $b$ be a locally integrable function. The commutator of $b$ with $T$ is defined by
 $$[b, T]f:=b\,T f-T(b f).$$
 The systematic study of such commutators originates in the seminal work of Coifman, Rochberg and Weiss \cite{Coifman1}, who showed that for the Riesz transforms $R$ on $\mathbb{R}^n$ the commutator $[b,R]$ is bounded on $L^p$ if and only if $b\in\mathrm{BMO}$. In the two-weight setting, Bloom \cite{bloom} characterised the boundedness of $[b,H]$, where $H$ denotes the Hilbert transform on $\mathbb{R}$, in terms of a weighted $\mathrm{BMO}$ space. Since then the weighted theory of commutators has developed extensively.

 A recent and active line of research seeks the matrix-weighted counterparts of these scalar results. A central issue is the correct formulation of the matrix $A_2$ condition. Fix $n,d\in\mathbb{N}$. A matrix weight on $\mathbb{R}^n$ is a locally integrable function $W:\mathbb{R}^n\to\mathbb{M}_d(\mathbb{C})$ whose values are self-adjoint and positive definite for almost every $x\in\mathbb{R}^n$. Following \cite{Treil1997}, $W$ is said to satisfy the matrix $A_2$ condition if
	\begin{equation}\label{matrix-A_2-intro}
		[W]_{A_2}:=\sup_{Q}\left\|\Big(\frac1{|Q|}\int_Q W\,dx\Big)^{\frac12}\Big(\frac1{|Q|}\int_Q W^{-1}\,dx\Big)^{\frac12}\right\|_\infty^2<\infty,
	\end{equation}
	the supremum being taken over all cubes $Q\subset\mathbb{R}^n$.
	Commutators in the matrix-weighted setting have also attracted considerable attention. Isralowitz, Kwon and Pott \cite{Isralowitz} extended Bloom's theorem to matrix weights, and Isralowitz, Pott and Treil \cite{Isralowitz2022} subsequently gave conditions that almost characterise the two-matrix-weight boundedness of commutators with Calder\'on--Zygmund operators for completely arbitrary matrix weights---a result that is new even in the fully scalar one-weight case.

 In this article we investigate a finer property of the commutator than its boundedness or compactness, namely its membership in the Schatten--Lorentz classes $S^{p,q}$ (defined in Subsection~\ref{Schatten}). These classes measure the rate of decay of the singular values of a compact operator and refine the Schatten ideals $S^p=S^{p,p}$. They play a significant role in operator theory, noncommutative analysis, and mathematical quantum theory, where the behaviour of compact operators on infinite-dimensional spaces is of central importance. In harmonic analysis, commutators of singular integral and multiplication operators may be regarded as generalisations of Hankel-type operators, whose Schatten-class membership is classical \cite{peller}. It is therefore natural to study  Schatten--Lorentz membership of $[b,T]$. This was started by Janson, Peetre and Wolff \cite{janson,janson2} and Rochberg
and Semmes \cite{Rochberg} in the 1980s. More recently, their results have been
extended to both weighted Euclidean \cite{GLW,Hyto,lacey,lacey3} and other
(e.g., \cite{fan1,Fan2,HW,Hyto,LXY,wei}) settings. Paper \cite{Hyto} appears on purpose in both lists: while addressing more general spaces, some of its results are already original in the weighted Euclidean space.

	Motivated by this, the aim of the present article is to establish a Schatten--Lorentz characterisation for commutators of Calder\'on--Zygmund operators with matrix-valued weights. Following the recent framework of \cite{Hyto}, we work on a space of homogeneous type $(X,\rho,\mu)$, but our results are already new for general Calder\'on--Zygmund operators $T$ in the Euclidean setting $X=\mathbb{R}^n$. While finishing this paper, we learned about a recent work of Li, Liang, Shen, and Wick \cite{LLSW},
where they characterize the Schatten--Lorentz membership
$[B,R_j]\in S^{p,q}(L^2(U),L^2(V))$ with two matrix weights $U,V$, where $B$ is a matrix-valued function and
$R_j$ is one of the classical Riesz transforms on $\R^n$. Our results and theirs intersect
when $X=\mathbb{R}^n$, $T=R_j$, $B=b$ is scalar-valued, and $U=V=W$. Outside this
common case, \cite{LLSW} deals with more general (matrix-valued) multipliers $B$
and pairs of weights $U,V$, while we deal with a broader class of operators $T$
and underlying spaces $X$. Perhaps oversimplifying a little, one may argue that a key point of \cite{LLSW} is identifying the new kinds of characterizing conditions that arise in their setting, while our emphasis is in showing that the exact same conditions that characterize the unweighted  $S^{p,q}$ norms of commutators in various recent contributions (placed under a common umbrella by the unified framework of \cite{Hyto}) also characterize their matrix-weighted $S^{p,q}$ norms in our setting with a  single matrix-weight $W$ and a scalar multiplier $b$.

Our main result is the following.
	
		\begin{theorem}\label{main}
Let $(X,\rho,\mu)$ be a space of homogeneous type with geometric dimension
$\Delta$ (Definition~\ref{dimension}), let $\mathscr D$ be a dyadic system on
$(X,\rho,\mu)$ (Definition~\ref{dyadiccube}), and let $W$ be a matrix $A_2$
weight. Let $T\in\mathscr L(L^2(\mu))$ be an operator with a non-degenerate
Calder\'on--Zygmund kernel with parameter $\eta\in(0,1]$, and
let $b\in L^1_{\mathrm{loc}}(\mu)$.

For every $q\in[1,\infty]$ the following estimates hold. First, for every
$p\in(1,\infty)$,
\[
    \|b\|_{\mathrm{Osc}^{p,q}}
    \lesssim
    \|[b,T]\|_{S^{p,q}(L^2(W))},
\]
where the norm $\|b\|_{\mathrm{Osc}^{p,q}}$ is defined in Definition \ref{def:Osc} and \eqref{osc-r}.
Conversely, if
\[
    p>p(\eta):=\max\Big\{1,\big(\frac{\eta}{\Delta}+\frac12\big)^{-1}\Big\},
\]
then
\[
    \|[b,T]\|_{S^{p,q}(L^2(W))}
    \lesssim
    \|b\|_{\mathrm{Osc}^{p,q}}.
\]
In particular, for $p>p(\eta)$ one has the two-sided estimate
\[
    \|[b,T]\|_{S^{p,q}(L^2(W))}
    \approx
    \|b\|_{\mathrm{Osc}^{p,q}}.
\]
\end{theorem}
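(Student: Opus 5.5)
The plan is to reduce the matrix-weighted problem to the unweighted-type framework of \cite{Hyto}. Conjugate by the isometry $f\mapsto W^{1/2}f$ from $L^2(W)$ onto $L^2(\mu;\mathbb{C}^d)$. Then $\|[b,T]\|_{S^{p,q}(L^2(W))}=\|[b,T_W]\|_{S^{p,q}(L^2(\mu;\mathbb C^d))}$, where $T_W:=W^{1/2}TW^{-1/2}$. Since $b$ is scalar, it commutes with $W^{\pm1/2}$, and so the commutator with $T_W$ is the conjugated commutator. Both directions will be proved with nearly weakly orthogonal (NWO) sequences in the sense of Rochberg--Semmes, adapted to $\mathbb C^d$-valued functions and to the reducing operators $\mathcal W_Q$ of the matrix $A_2$ weight. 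Throughout we use the standard facts $|\mathcal W_Q e|^2\approx \langle |W^{1/2}e|^2\rangle_Q$ and $\|\mathcal W_Q \mathcal W_Q'\|\lesssim[W]_{A_2}^{1/2}$, where $\mathcal W_Q'$ is the reducing operator for $W^{-1}$.

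\emph{Upper bound ($p>p(\eta)$).} We apply the dyadic representation theorem to write $T$ as an average over random dyadic systems of dyadic shifts $S^{i,j}$, with decay $2^{-\eta\max(i,j)/2}$ or similar, plus paraproducts. Each commutator $[b,S^{i,j}]$ then splits into a finite sum of operators of the form
\[
f\mapsto \sum_{K}\ \sum_{I,J\subset K} a_{IJK}\,(\text{oscillation of }b\text{ on }K)\,\langle f,h_I\rangle h_J,
\]
together with paraproduct-type terms $\sum_Q \langle b,h_Q\rangle \langle f\rangle_Q h_Q$. After conjugation by $W^{1/2}$, each such term factors as $\sum_Q \lambda_Q\, e_Q\otimes g_Q$ with $\lambda_Q\approx \ell(Q)$-scale oscillation of $b$ and with $e_Q=W^{1/2}\mathcal W_Q^{-1}h_Q$-type and $g_Q=W^{-1/2}\mathcal W_Q h_Q$-type vector functions. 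The key lemma is that these families are NWO. This should follow from the matrix Carleson embedding theorem, i.e.\ boundedness of the weighted maximal/averaging operators $f\mapsto \mathcal W_Q\langle W^{-1/2}f\rangle_Q$ on $L^2$, which is where $W\in A_2$ enters. The Rochberg--Semmes estimate then gives $S^{p,q}$ norm $\lesssim \|(\lambda_Q)\|_{\ell^{p,q}}$ for $p>1$. The dyadic quantities are controlled by $\|b\|_{\mathrm{Osc}^{p,q}}$ exactly as in \cite{Hyto}. Summing over shift complexity, the NWO constant grows polynomially in $(i,j)$ but the $\ell^{p,q}$ count of oscillations on $K$ with subcubes at depth $i$ costs about $2^{i\Delta(1/2-1/p)}$-type factors, so the series converges only when $\eta>\Delta(1/p-1/2)$, i.e.\ $p>p(\eta)$. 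For $q<1$ issues we use the quasi-triangle inequality with $q\ge1$, so $S^{p,q}$ is normable for $p>1$.

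\emph{Lower bound (all $p>1$).} We follow \cite{Hyto} via non-degeneracy. For each dyadic $Q$ we choose a companion ball $\tilde Q$ at distance $\approx\ell(Q)$ where $K(x,y)$ has a fixed sign and size $\gtrsim \mu(Q)^{-1}$. We also choose functions $\phi_Q,\psi_Q$ supported on $Q,\tilde Q$ such that $\langle [b,T]\phi_Q,\psi_Q\rangle$ dominates the local oscillation of $b$ on $Q$ (the median trick). For vector weights we take $\phi_Q= \mathcal W_Q^{-1}e\,1_Q\mu(Q)^{-1/2}$ in $L^2(W)$ and the dual functions $\psi_Q=\mathcal W_Q e\,1_{\tilde Q}\mu(Q)^{-1/2}$ paired in $L^2(W)$-duality, for fixed unit $e$. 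Since $\mathcal W_Q\approx\mathcal W_{\tilde Q}$ by doubling and $A_2$, these give $|\langle[b,T]\phi_Q,\psi_Q\rangle|\gtrsim \mathrm{osc}(b,Q)$. Both $(\phi_Q)$ and $(\psi_Q)$ are NWO in the weighted spaces, again by the matrix Carleson embedding. The Rochberg--Semmes lower bound $\|(\langle A\phi_Q,\psi_Q\rangle)\|_{\ell^{p,q}}\lesssim\|A\|_{S^{p,q}}$ then finishes, after splitting $\mathscr D$ into finitely many sparse subfamilies to ensure separation of the $\tilde Q$.

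\emph{Main obstacle.} I expect the main difficulty to be verifying the NWO property of the weighted, matrix-twisted families uniformly, i.e.\ proving the bound
\[
\sum_Q \sup_{Q}\bigl|\mathcal W_Q\langle W^{-1/2}f\rangle_Q\bigr|^2\mu(Q)\lesssim\|f\|_2^2
\]
and its analogue with shifted cubes, with polynomial dependence on complexity. I would first try to deduce it from the matrix-weighted Carleson embedding together with the sparse domination of the $W$-maximal operator (Christ--Goldberg).
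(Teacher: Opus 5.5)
Your lower bound and your treatment of the paraproducts agree in substance with the paper. Both use scalar test functions from the non-degeneracy construction, an NWO duality estimate, and paraproducts written as NWO sums. Putting $\mathcal W_Q^{\mp1}$ into the test functions is equivalent to the paper's insertion of $\langle W\rangle_Q^{\pm1/2}$ and $\langle W^{-1}\rangle_Q^{\pm1/2}$ into the bilinear maximal operator $M_{\mathcal E,W}$. What you flag as the main obstacle is in fact routine. The NWO machinery only needs a maximal-function bound, which follows from reverse H\"older plus the $L^2$-boundedness of $(M|g|^{\alpha})^{1/\alpha}$ for $\alpha<2$ (Lemma~\ref{bimaximal}). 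The estimate you display, read as a sum over all cubes, already fails for $W\equiv I$ (take $f$ the indicator of a cube in $\mathbb R^n$).

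The genuine gap is in the dyadic shifts. This is exactly where $p(\eta)$ is produced, and there your proposal only asserts the outcome. Two numerical slips show this: the exponent in ``$2^{i\Delta(1/2-1/p)}$'' has the wrong sign, and with the decay $2^{-\eta\max(i,j)/2}$ you quote, the series would need $\eta/2>\Delta(1/p-1/2)$; the representation used here has the factor $\omega(\delta^m)=\delta^{m\eta}$.

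More seriously, a block $A_{(R,k)}$ of $\mathbb S_{\langle b\rangle}$ is not one rank-one NWO term per cube. It has rank up to $C\delta^{-(i\wedge j)\Delta}$. Its coefficients $a_{PQR}(\langle b\rangle_P-\langle b\rangle_Q)$ do not factor as $a_{PQR}$ times an oscillation of $b$ on $R$, because averages over small subcubes are controlled only in the square-summed sense $\sum_P\frac{\mu(P)}{\mu(R)}|\langle b\rangle_P-\langle b\rangle_R|^2\le\fint_R|b-\langle b\rangle_R|^2$. Suppose you force a block into NWO form, e.g.\ $A_{(R,k)}=\sum_P h_P\otimes g_P$ over the $\sim\delta^{-(i\wedge j)\Delta}$ cubes on the smaller side, with $g_P$ spread over $R$. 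Even granting a common coefficient to all pieces, the $\ell^p$ norm of the coefficient sequence grows by $\approx\delta^{-(i\wedge j)\Delta/p}$ (e.g.\ in $\mathbb R^n$). Summing against $\delta^{m\eta}$ then yields only $p>\max\{1,\Delta/\eta\}$. That range is strictly smaller than $p>p(\eta)$ as soon as $\Delta>\eta$, and it misses some $p>2$ when $\Delta>2\eta$. NWO bounds see the sizes of the individual pieces, but not that the block as a whole is small in $S^2$.

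The missing idea is the paper's, in the following steps.
\begin{enumerate}
\item Decouple the blocks: $\|\mathbb S_a\|_{S^p(L^2(W))}^p\sim\sum_{(R,k)}\|A_{(R,k)}\|_{S^p(L^2(W))}^p$. In $L^2(W)$ this needs a matrix-weighted square-function estimate, because after conjugation by $W^{1/2}$ the blocks no longer have orthogonal ranges.
\item Bound each block in $S^2$ through its kernel (Lemma~\ref{kernel}). H\"older and reverse H\"older give $\|a'_{(R,k)}\|_{\ell^2_W}\lesssim(\fint_R|b-\langle b\rangle_R|^{2r'})^{1/(2r')}$.
\item For $p<2$, pass to $S^p$ via $\|A\|_{S^p}\le\|A\|_{S^2}(\operatorname{rank}A)^{1/p-1/2}$.
\item Interpolate in $q$, with an $\varepsilon$-loss at $p=2$, and account for the level multiplicity $(1+i\wedge j)^{1/p}$.
\item Handle $[\Pi_b,\mathbb S]$ and $[\Gamma_b^\alpha,\mathbb S]$ by the ideal property.
\end{enumerate}
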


The scalar counterpart of our results is from \cite{Hyto}, where the Schatten class $S^p$ (and the weak Schatten class $S^{d,\infty}$) membership of $[b,T]$ was characterized on spaces of homogeneous type, including the scalar weighted spaces $L^2(w)$ with a Muckenhoupt weight $w\in A_2$. We follow the dyadic-representation approach of \cite{Hyto} throughout, making explicit use of several of its structural results: the dyadic representation theorem,  the oscillation estimate.

Together with Proposition~1.29 in \cite{Hyto}, we obtain the following corollary.

\begin{corollary}\label{cor:classical}
Let $(X,\rho,\mu)$ and $T\in\mathscr L(L^2(\mu))$ be as in Theorem~\ref{main}.
Assume that $(X,\rho,\mu)$ has geometric dimension $\Delta$, lower dimension
$d$, and upper dimension $D$ in the sense of Definition~\ref{dimension}. Let
$b\in L^1_{\mathrm{loc}}(\mu)$. In items \textup{(ii)} and \textup{(iii)},
where $d>1$, we assume in addition that $(X,\rho)$ is a complete metric space
supporting a $(1,d)$-Poincar\'e inequality: for some constants $c_P>0$ and
$\lambda\in[1,\infty)$,
\begin{equation}\label{eq:poincare}
  \big\langle\big|f-\langle f\rangle_{B(x,r)}\big|\big\rangle_{B(x,r)}
  \leq
  c_P\,r\,\big\langle(\operatorname{lip} f)^d\big\rangle_{B(x,\lambda r)}^{1/d}
\end{equation}
for all $x\in X$, all $r>0$, and all Lipschitz functions $f$, where
$$
  \operatorname{lip} f(x)
  :=\liminf_{r\to0}\sup_{\rho(x,y)\leq r}\frac{|f(x)-f(y)|}{r}.
$$
\begin{enumerate}[\upshape(i)]
\item \textup{(Besov case.)}
For every $p\in(1,\infty)$, the finiteness of the homogeneous Besov norm
\[
 \|b\|_{\dot B^p(\mu)}
 :=\left(
 \iint_{X\times X}\frac{|b(x)-b(y)|^p}{V(x,y)^2}\,d\mu(x)\,d\mu(y)
 \right)^{1/p}
\]
is necessary for $[b,T]\in S^p(L^2(W))$. It is sufficient whenever
\[
 p>p(\eta):=\max\Big\{1,\big(\frac{\eta}{\Delta}+\frac12\big)^{-1}\Big\}.
\]
More precisely,
\[
 \|b\|_{\dot B^p(\mu)}\lesssim \|[b,T]\|_{S^p(L^2(W))},
 \qquad p\in(1,\infty),
\]
and
\[
 \|[b,T]\|_{S^p(L^2(W))}\lesssim \|b\|_{\dot B^p(\mu)},
 \qquad p>p(\eta).
\]

\item \textup{(Haj\l{}asz--Sobolev case.)}
If $d=D>1$ and $\eta>(1-d/2)_+$, then $[b,T]$ belongs to the weak Schatten class $S^{d,\infty}(L^2(W))$ if and
only if $b$ belongs to the homogeneous Haj\l asz--Sobolev space
$\dot M^{1,d}(\mu)$ defined by the norm
\begin{equation*}
\begin{split}
 \|b\|_{\dot M^{1,d}(\mu)}
 :=\inf\bigl\{\|h\|_{L^d(\mu)}:
 |b(x)-b(y)| \leq (h(x) &+h(y))\rho(x,y)\\ & \text{for a.e. }x,y\in X\bigr\}.
\end{split}
\end{equation*}
In this case,
\[
 \|[b,T]\|_{S^{d,\infty}(L^2(W))}
 \approx
 \|b\|_{\dot M^{1,d}(\mu)}.
\]

\item If $d>1$ and $p\in(0,d]$, then $[b,T]\in S^p(L^2(W))$ if and only if
$b$ is constant.
\end{enumerate}
\end{corollary}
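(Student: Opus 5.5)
The plan is to derive Corollary~\ref{cor:classical} from Theorem~\ref{main} together with the unweighted identifications of oscillation norms in \cite[Proposition~1.29]{Hyto}. That proposition, as I read it, does all the function-space work and never involves an operator or a weight:
\begin{enumerate}[\upshape(a)]
\item $\|b\|_{\mathrm{Osc}^{p,p}}\approx\|b\|_{\dot B^p(\mu)}$ for all $p\in(1,\infty)$;
\item under the stated Poincar\'e and completeness hypotheses with $d=D>1$, $\|b\|_{\mathrm{Osc}^{d,\infty}}\approx\|b\|_{\dot M^{1,d}(\mu)}$;
\item for $d>1$ and $p\le d$, $\|b\|_{\mathrm{Osc}^{p,p}}<\infty$ forces $b$ to be constant.
\end{enumerate}
Since $S^p=S^{p,p}$ and the oscillation norm of Definition~\ref{def:Osc} and \eqref{osc-r} is the same object as in \cite{Hyto}, each item should follow by combining Theorem~\ref{main} with the matching statement above.

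\textbf{Item (i).} Take $q=p$ in Theorem~\ref{main}. The necessity bound $\|b\|_{\mathrm{Osc}^{p,p}}\lesssim\|[b,T]\|_{S^p(L^2(W))}$ holds for every $p\in(1,\infty)$. The sufficiency bound holds for $p>p(\eta)$. Replacing $\|b\|_{\mathrm{Osc}^{p,p}}$ by $\|b\|_{\dot B^p(\mu)}$ via (a) gives both displayed estimates.

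\textbf{Item (ii).} Take $p=d$ and $q=\infty$; Theorem~\ref{main} allows $q\in[1,\infty]$.
\begin{itemize}
\item Necessity of $\|b\|_{\mathrm{Osc}^{d,\infty}}$ holds for every $d>1$.
\item For sufficiency I must check $d>p(\eta)$. Since $d=D$ and the geometric dimension satisfies $d\le\Delta\le D$ (I expect this from Definition~\ref{dimension}), we get $\Delta=d$.
\item The condition $d>(\eta/d+1/2)^{-1}$ is equivalent to $\eta+d/2>1$, i.e. $\eta>1-d/2$. Together with $d>1$, this is exactly the hypothesis $\eta>(1-d/2)_+$.
\item Identification (b) then converts the oscillation norm into $\|b\|_{\dot M^{1,d}(\mu)}$, giving the two-sided estimate.
\end{itemize}

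\textbf{Item (iii).} Here $p\in(0,d]$, and Theorem~\ref{main} only covers $p>1$.
\begin{itemize}
\item For $p\in(1,d]$: if $[b,T]\in S^p(L^2(W))$, the necessity part gives $\|b\|_{\mathrm{Osc}^{p,p}}<\infty$, and (c) makes $b$ constant.
\item For $p\le1$: use the inclusion $S^p\subset S^{p'}$ for some $p'\in(1,d]$, which exists since $d>1$. This reduces to the previous case.
\item Conversely, if $b$ is constant then $[b,T]=0$.
\end{itemize}

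\textbf{Main obstacle.} The genuinely delicate step is item (ii). It requires confirming that the dimension relations in Definition~\ref{dimension} give $\Delta=d$ when $d=D$, or at least $\Delta\le d$, so that $p(\eta)<d$. It also requires that \cite[Proposition~1.29]{Hyto} applies with exactly the Poincar\'e hypothesis \eqref{eq:poincare} as stated. If only $\Delta\le D$ were available, I would fall back on $\Delta\le D=d$. This still suffices, because $p(\eta)$ is increasing in $\Delta$.
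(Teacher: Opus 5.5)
Your proposal is correct and follows the paper's proof: items (i) and (ii) are Theorem~\ref{main} with $q=p$ and $q=\infty$, combined with the oscillation/function-space identifications. Your check that $\Delta=d$ and that $d>p(\eta)\iff\eta>(1-d/2)_+$ spells out a step the paper leaves implicit. Item (iii) is reduced by a Schatten inclusion to the constancy of the critical Besov space; the paper simply uses $S^p\subset S^d$ for all $p\in(0,d]$ instead of splitting at $p=1$.

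The one correction concerns sourcing. The paper obtains your (b) from \cite[Proposition~1.29]{Hyto} together with \cite[Theorem~1.1]{Hyto1}, and your (c) from $\dot B^d(\mu)=\{\textrm{constants}\}$ in \cite[Theorem~1.5]{Hyto1}, with the case $p<d$ following from $p=d$. This is exactly where completeness and the Poincar\'e inequality are indispensable, so (c) should not be stated without them: it fails, e.g., on a snowflaked line, where $d=D=2$ but $\dot B^2(\mu)$ contains nonconstant functions.
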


The Haj\l asz--Sobolev space $\dot M^{1,d}(\mu)$ was introduced in \cite{Hajlasz}. In order to connect Corollary \ref{cor:classical} to know results in $\R^d$, we note in particular that $\dot M^{1,d}(\mu)=\dot W^{1,d}(\R^d)$ if $\mu$ is the Lebesgue measure on $\R^d$; this is \cite[Theorem 1]{Hajlasz}. More information about $\dot M^{1,d}(\mu)$ relevant to the present context can be found in \cite{Hyto}. 

 \begin{proof}[Proof of Corollary \ref{cor:classical}]
The equivalence between the dyadic oscillation scale and the homogeneous Besov scale is \cite[Proposition~1.29]{Hyto}. This gives item \textup{(i)} from Theorem~\ref{main} by taking $q=p$. Under the additional hypotheses $d=D>1$ and $\eta>(1-d/2)_+$, and using the completeness of $(X,\rho)$ together with the
$(1,d)$-Poincar\'e inequality \eqref{eq:poincare}, the same proposition with  \cite[Theorem 1.1]{Hyto1} identifies the critical weak space $\mathrm{Osc}^{d,\infty}$ with the homogeneous Haj\l{}asz--Sobolev space $\dot M^{1,d}(\mu)$, giving item \textup{(ii)}. Finally, item \textup{(iii)} is obtained as follows: For $p\in(0,d]$, we $[b,T]\in S^p(L^2(W))\subset S^d(L^2(W))$, after which we obtain from case (i) that $b\in\dot B^d(\mu)=\{\textrm{constants}\}$, where the last identity is part of \cite[Theorem 1.5]{Hyto1}.
\end{proof}

The paper is organised as follows. In Section~\ref{prelim} we collect the
preliminaries on spaces of homogeneous type, dyadic cubes, the oscillation spaces
$\mathrm{Osc}^{p,q}$, the Schatten--Lorentz classes, and matrix $A_2$ weights.
Section~\ref{NWO} develops the matrix-weighted NWO (nearly weakly orthogonal sequences) techniques---the bilinear-sublinear
maximal operator and the Carleson-type bounds---which form the technical core of the
paper. In Section~\ref{lower} we prove the lower bound (necessity) in Theorem~\ref{main},
which relies on Section~\ref{NWO}. The upper bound is then assembled
through the dyadic representation theorem: Section~\ref{para} treats commutators of
dyadic paraproducts, Section~\ref{shift} treats commutators of dyadic shifts, and
Section~\ref{upper} combines these via the representation theorem to complete the
proof of Theorem~\ref{main}.

	\section{Preliminaries}\label{prelim}
	
	\subsection{Spaces of homogeneous type}\label{homospace}
	Our results are set on spaces of homogeneous type, a very general framework. We collect the necessary notation and definitions here.
	Throughout the paper, $C$ denotes a positive constant, independent of the main parameters, which may change from line to line. We write $A \lesssim B$ to mean $A \leq C B$ for some $C>0$, and $A \sim B$ to mean $A \lesssim B$ and $B \lesssim A$. For $1 \leq p \leq \infty$, we denote by $p^{\prime}$ the H\"older conjugate of $p$, satisfying $\frac{1}{p}+\frac{1}{p^{\prime}}=1$.
	We denote the average of a function $f$ over a ball $B$ by
	$$
	\langle f\rangle_{B}:=\fint_{B} f(x)\, d\mu:=\frac{1}{\mu(B)} \int_{B} f(x)\, d\mu.
	$$
 We recall the three dimension concepts that appear in Theorem~\ref{main} and Corollary~\ref{cor:classical}.

 \begin{definition}\label{dimension}
 We say that a quasi-metric measure space $(X,\rho,\mu)$ has
 \begin{enumerate}
 \item geometric dimension $\Delta$ if every $B(x,R)$ can contain at most $C(R/r)^{\Delta}$ mutually $r$-separated points;
 \item lower dimension $d$ if $$\Big(\frac R r\Big)^d\lesssim\frac{V(x,R)}{V(x,r)};$$
 \item upper dimension $D$ if $$\frac{V(x,R)}{V(x,r)}\lesssim\Big(\frac R r\Big)^D$$
 \end{enumerate}
 in each case uniformly in $x\in X$ and $0<r\leq R<\infty$.
 \end{definition}

\begin{remark}
The geometric dimension of Definition \ref{dimension} was called {\em separation dimension} in \cite[Definition 3.1]{Hyto} as a distinction from a seemingly different {\em covering dimension}, also in \cite[Definition 3.1]{Hyto}. However, \cite[Proposition 3.6]{Hyto} shows that these two dimension concepts are equal. The word ``geometric'' emphasises the fact that this dimension concept only depends on the metric space $(X,\rho)$, while lower and upper dimension also refer to the measure $\mu$.
\end{remark}

	The considerations in the rest of this paper will rely on the notion of dyadic cubes.

\begin{definition}\label{dyadiccube}
	A system of dyadic cubes $\mathscr{D}$, or just a dyadic system, on the space of homogeneous type $(X, \rho, \mu)$ is a collection
	$$
	\mathscr{D}=\bigcup_{k \in \mathbb{Z}} \mathscr{D}_{k},
	$$
	where
	\begin{enumerate}
		\item each $\mathscr{D}_{k}$ is a measurable partition of $X$;
		\item each $\mathscr{D}_{k+1}$ refines the previous $\mathscr{D}_{k}$;
	\item for parameters $\delta \in(0,1)$ and $0<c_{0} \leq C_{0}<\infty$, each $Q \in \mathscr{D}_{k}$ is essentially a ball of size $\delta^{k}$, in the sense that, for some ``centre'' $z_{Q} \in X$,
	\begin{equation*}
		B\left(z_{Q}, c_{0} \delta^k\right) \subseteq Q \subseteq B(z_Q,C_0\delta^k). 
	\end{equation*}
	\end{enumerate}

	The dyadic system is said to be connected if, in addition,
	
	(4) for any two $P, Q \in \mathscr{D}$, there is some $R \in \mathscr{D}$ such that $P \cup Q \subseteq R$.

\end{definition}

We write $\ell(Q):=\delta^k$ for $Q\in\mathscr{D}_k$ and call it
the \emph{side length} of $Q$, and we set
\begin{equation}\label{eq:BQ}
   B_Q:=B(z_Q,C_0\ell(Q)),
\end{equation}
so that
$Q\subseteq B_Q$ and $\mu(B_Q)\lesssim\mu(Q)$, with constants depending only on
the space.

Recall that the standard dyadic system $\mathscr{D}=\left\{2^{-k}\left([0,1)^{d}+m\right): k \in \mathbb{Z}, m \in \mathbb{Z}^{d}\right\}$ on $\mathbb{R}^{d}$. The standard dyadic system  is not connected, but a simple modification of the construction yields a connected system.
In general, the same underlying set \(Q\) may belong to \(\mathscr{D}_k\) for several distinct values of \(k\). It is therefore
convenient to keep the level in the notation and write the pair $(Q,k)$ whenever
the level is not determined by the set $Q$. 

For $Q\in\mathscr{D}_k$, its \emph{children} are
\[
\operatorname{ch}(Q):=\{Q'\in\mathscr{D}_{k+1}:Q'\subseteq Q\},
\qquad
\operatorname{ch}^{(j)}(Q):=\operatorname{ch}\bigl(\operatorname{ch}^{(j-1)}(Q)\bigr),
\]
and, when the level is left unspecified, its strict children are
\[
[\operatorname{ch}](Q):=\{Q'\in\mathscr{D}:Q'\subsetneq Q
\ \text{and}\ Q'\subsetneq R\subsetneq Q\ \text{for no } R\in\mathscr{D}\},
\]
the maximal members of $\mathscr{D}$ properly contained in $Q$.
	
We recall the $\operatorname{Osc}^{p,q}$ space first introduced in \cite{Rochberg} on $X=\R^d$ and in \cite{Hyto} on general $X$ as here.


 \begin{definition}\label{def:Osc}
 Let $p,r\in(0,\infty)$ and $q\in(0,\infty]$.
A measurable function $b\in L^1_{\mathrm{loc}}(\mu)$ belongs to $\operatorname{Osc}_r^{p, q}$, if
 $$
		\|b\|_{\operatorname{Osc}_r^{p, q}}:=\left\|\left\{\inf_c\Big(\fint_{B_Q}|b-c|^r\,d\mu\Big)^{\frac1r}\right\}_{Q \in \mathscr{D}}\right\|_{\ell^{p, q}}<\infty,
		$$
 where $B_Q$ is the ball associated with $Q$ as in \eqref{eq:BQ}
\end{definition}

By \cite[Proposition 11.3 and Lemma 11.4]{Hyto}, for all $p\in(0,\infty)$ and $q\in(0,\infty]$, changing the third index $r\in(0,\infty)$ yields an equivalent quantity; accordingly, we define
\begin{equation}\label{osc-r}
  \|b\|_{\operatorname{Osc}^{p, q}}
  :=
 \|b\|_{\operatorname{Osc}_1^{p, q}}\approx
 \|b\|_{\operatorname{Osc}_r^{p, q}}.
\end{equation}

\subsection{Singular integral operators}\label{Singularintegral}
	We list the notation and definitions that will be used throughout. For a bounded linear operator $T \in \mathscr{L}\left(L^{2}(\mu)\right)$ with off-support integral representation
$$
T f(x)=\int_{X} K(x, y) f(y)\, d\mu(y), \quad x \in X \backslash \operatorname{supp} f,
$$
given by a kernel $K \in C(X \times X \setminus\{(x, x): x \in X\})$, we impose the following conditions.

\begin{definition}[Calder\'on--Zygmund kernel]\label{def:CZK}\mbox{}
\begin{enumerate}
\item The kernel $K$ is called an \emph{$\omega$-Calder\'on--Zygmund kernel} if it satisfies the Calder\'on--Zygmund size estimate
\begin{equation}\label{eq:CZ}
  	|K(x,y)|\lesssim \frac{1}{V(x,y)},
\end{equation}
	and the modulus-of-continuity bound
\begin{equation}\label{eq:CZomega}
	|K(x,y)-K(x',y)|+|K(y,x)-K(y,x')|\leq \omega\!\left(\frac{\rho(x,x')}{\rho(x,y)}\right)\frac{1}{V(x,y)}
\end{equation}
	for all $x,x',y\in X$ with $\rho(x,x')\ll\rho(x,y)$, where
	$${V(x, y)}:={\mu(B(x, \rho(x, y)))}.$$

In particular, given $\eta\in (0,1]$, we say that the kernel $K$ satisfies the Calder\'on--Zygmund estimates with parameter $\eta$ if it is an $\omega$-Calder\'on--Zygmund kernel with $\omega(t)=t^\eta$.
	\item The kernel $K$ is said to be \emph{non-degenerate} if for every $x \in X$ and $r \in(0, \infty)$ there exists a point $y \in X$ with $\rho(x, y) \sim r$ and
	\begin{equation*}
		|K(x, y)|+|K(y, x)| \gtrsim \frac{1}{V(x, y)},
	\end{equation*}		
	uniformly in $x$ and $r$.
\end{enumerate}
\end{definition}
	
\subsection{Schatten--Lorentz classes}\label{Schatten}
Let $H$ be a separable complex Hilbert space.
Let $x$ be a compact operator on $H$ with adjoint $x^*$. Then $|x|=\sqrt{x^*x}$ is a compact, self-adjoint, non-negative operator on $H$. Denote by $\left\{\mu_k(x)\right\}_{k\geq 1}$ the singular values of $x$, arranged in non-increasing order
 $$\mu_1(x)\geq \mu_2(x)\geq \cdots\geq \mu_k(x)\geq \cdots\geq 0,$$
 and each value is repeated according to its multiplicity.
 For $p\in(0,\infty)$ and $q\in(0,\infty]$, $x$ belongs to the \emph{Schatten--Lorentz class} $S^{p,q}(H)$ if the sequence $\left\{\mu_k(x)\right\}$ is $\ell^{p,q}$-summable; the class is equipped with the quasi-norm
 $$\|x\|_{S^{p,q}(H)}:=\Big(\sum_{k\geq 1} \mu_k(x)^{q}\, k^{q/p-1}\Big)^{1/q}\quad(q<\infty),\qquad \|x\|_{S^{p,\infty}(H)}:=\sup_{k\geq 1} k^{1/p}\mu_k(x).$$
 We write $S^{p,p}(H)=S^p(H)$ for the Schatten $p$-class. The trace of a trace-class operator $x$ is denoted by $\tau(x)$; for $x\geq 0$ one has $\tau(x)=\|x\|_{S^1(H)}=\sum_{k\geq 1}\mu_k(x)$. It is well known that
 \begin{equation}\label{s2-s1}
 \|x\|_{S^2(H)}^2=\||x|^2\|_{S^1(H)}=\tau(|x|^2)
 \end{equation}
 for every $x\in S^2(H)$.

 For vectors $e,f\in H$, we identify the elementary tensor $e\otimes f$ with the rank-one operator $h\mapsto \langle h,f\rangle e$.
 By the Schmidt decomposition, for every compact operator $x$ on $H$ there exist orthonormal systems $(e_n), (f_n)\subset H$ such that
 $$x=\sum_{n\geq 1}\mu_{n}(x)\, e_n\otimes f_n.$$

	\subsection{Matrix weights}
	 Given $1\leq N<\infty$ and $1\leq p<\infty$, the space $L^p\left(X; \mathbb{C}^N\right)$ consists of all measurable functions $f: X \to \mathbb{C}^N$ such that
	 	$$\|f\|_{L^p\left(X;\mathbb{C}^N\right)}:=\left(\int_{X}\|f(t)\|_{\mathbb{C}^N}^p \,d\mu \right)^{1/p}<\infty.$$
	 A \emph{matrix weight} $W$ on $X$ is a locally integrable function $W:X\to \mathbb{M}_N(\mathbb{C})$ whose values are almost everywhere positive definite.

The weighted Lebesgue space $L^p(W)$ consists of all measurable functions $f:X \to \mathbb{C}^N$ for which
 	$$\|f\|_{L^p(W)}:=\Big(\int_X \|W^{\frac{1}{p}}(t)f(t)\|_{\mathbb{C}^N}^p \,d\mu \Big)^{1/p}<\infty.$$	
We say that $W\in A_2$ if
	\begin{equation}\label{matrix-A_2}
		[W]_{A_2}:=\sup_{Q}\left\|\Big(\frac{1}{\mu(Q)}\int_Q W\,dx\Big)^{\frac{1}{2}}\Big(\frac{1}{\mu(Q)}\int_Q W^{-1}\,dx\Big)^{\frac{1}{2}}\right\|_\infty^2<\infty,
	\end{equation}
	where we write $\|\cdot\|_\infty$ for its operator norm, i.e., for $M\
	\in\mathbb M_N(\mathbb C)$, $\|M\|_\infty:=\sup_{\|v\|_{\mathbb{C}^N}=1}\|Mv\|_{\mathbb{C}^N}$.
Recall the reverse H\"older inequality (see, e.g., \cite[Proposition 2.1]{Christ2001}): if $W\in A_2$, then there exists $s>2$ such that for every cube $Q$,
	\begin{equation}\label{reverseholder}
	\int_Q\big\|\langle W^{-1}\rangle_Q^{\frac{1}{2}}W^{\frac{1}{2}}\big\|_\infty^s\,d\mu \lesssim_{[W]_{A_2}}\mu(Q).
	\end{equation}

	\section{Nearly weakly orthogonal sequences}\label{NWO}
	The nearly weakly orthogonal (NWO) sequences introduced by Rochberg and Semmes \cite{Rochberg} play a crucial role in establishing Schatten--Lorentz membership for compact operators on a Hilbert space: they provide a flexible substitute for the Schmidt decomposition that is well adapted to a dyadic structure. This section develops the matrix-weighted version of this notion, which is used in both the lower bound of Section~\ref{lower} and the upper bounds of Sections~\ref{para}--\ref{upper}.

	A parallel extension of \cite{Rochberg1} is also contained in \cite{LLSW}, but there are some differences in the details. Notably, we work over a general space of homogeneous type $X$, and we use the bisublinear formalism introduced in \cite{Hyto}, as in \eqref{M_e}.

	We begin by recalling the definition of an NWO sequence, following \cite[Definition 13.1]{Hyto}; the notion originates in \cite[p.241]{Rochberg1} and \cite[Definition~7.1]{Rochberg}.
	\begin{definition}Let $B_{Q}^{*}=B(x_Q,c\,\ell(Q))$ be a concentric enlargement of $B_Q$ by a factor $c$ independent of $Q$. Fix $r\in(2,\infty)$. A sequence $(e_Q)_{Q\in \mathscr{D}}$ is called an \emph{$\mathrm{NWO}(r)$ sequence} if, for every $Q\in \mathscr{D}$, $e_Q$ is supported on $B_Q^*$ and
	$$\|e_Q\|_r\lesssim \mu(Q)^{\frac{1}{r}-\frac{1}{2}}.$$
We call $(e_Q)_{Q\in\mathscr D}$ an \emph{NWO sequence} if it is an $\mathrm{NWO}(r)$ sequence for every $r\in(2,\infty)$; in other words,
\[
\mathrm{NWO}:=\bigcap_{r\in(2,\infty)}\mathrm{NWO}(r).
\]
The implied constant in the norm estimate may depend on $r$, but is independent of~$Q$.

\end{definition}
Let $e_Q,h_Q:X\to\mathbb C$ be scalar-valued NWO functions, $f,g\in L^2(\mu;\mathbb C^N)$ be vector-valued functions.
	For a pair sequence $\mathcal{E}=(e_Q,h_Q)_{Q\in \mathscr{D}}$ and a matrix-weight $W$, we consider the bilinear-sublinear maximal operator
	\begin{equation}\label{M_e}
	 M_{\mathcal{E},W}(f,g)(x):=\sup_{Q\in\mathscr{D}}\chi_{Q}(x)\frac{|\langle W^{-\frac{1}{2}}e_Q,f\rangle \cdot \langle W^{\frac{1}{2}}h_Q,g\rangle|}{\mu(Q)},
	\end{equation}
where the bracket
$
\langle e,f\rangle=\int_X\overline{e(y)}f(y)\,d\mu(y)$
 and 
  the  pairing $u\cdot v:\sum_{j=1}^N u_j\overline{v_j}$. Note that two brackets above are vectors-valued and pairing  is scalar-valued. Consequently, $M_{\mathcal E,W}(f,g)$ is a non-negative scalar-valued function. 
	We have the following result.
	
		\begin{lemma}\label{bimaximal}Let $(X,\rho,\mu)$ be a space of homogeneous type and $W$ a matrix $A_2$ weight. Let $\left\{e_{Q}\right\}_{Q \in \mathscr{D}}$ and $\left\{h_{Q}\right\}_{Q \in \mathscr{D}}$ be NWO sequences. Then $M_{\mathcal{E},W}$, defined in \eqref{M_e}, is bounded from $L^2(\mu;\mathbb C^N)\times L^2(\mu;\mathbb C^N)$ to $L^1(\mu)$.
	\end{lemma}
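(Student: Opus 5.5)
Our plan is to reduce the matrix-weighted bilinear maximal operator to a product of two scalar sublinear maximal-type operators by inserting the reducing matrices $\langle W\rangle_{B_Q^*}^{1/2}$ and $\langle W^{-1}\rangle_{B_Q^*}^{1/2}$. We then bound each factor by a scalar maximal function of a ``weighted'' version of $f$ or $g$. For fixed $Q$ we write
\[
\langle W^{-\frac12}e_Q,f\rangle\cdot\langle W^{\frac12}h_Q,g\rangle
=\Big(\langle W\rangle_{B_Q^*}^{\frac12}\langle W^{-\frac12}e_Q,f\rangle\Big)\cdot\Big(\langle W\rangle_{B_Q^*}^{-\frac12}\langle W^{\frac12}h_Q,g\rangle\Big).
\]
This uses only self-adjointness of $\langle W\rangle_{B_Q^*}^{\pm1/2}$. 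Hence
\[
M_{\mathcal E,W}(f,g)(x)\le \sup_Q\chi_Q(x)\frac{|A_Q f|}{\mu(Q)^{1/2}}\cdot\sup_Q\chi_Q(x)\frac{|B_Q g|}{\mu(Q)^{1/2}}=:M_1f(x)\,M_2g(x),
\]
where
\[
A_Qf=\int \overline{e_Q}\,\langle W\rangle_{B_Q^*}^{1/2}W^{-1/2}f\,d\mu,\qquad
B_Qg=\int \overline{h_Q}\,\langle W\rangle_{B_Q^*}^{-1/2}W^{1/2}g\,d\mu .
\]
By Cauchy--Schwarz, the lemma then follows once $M_1$ and $M_2$ are each bounded on $L^2(\mu;\mathbb C^N)\to L^2(\mu)$.

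For $M_1$ we apply H\"older with exponents $r$ and $r'$, taking $r>2$ large so that $r'<2$ is close to $1$. This gives
\[
|A_Qf|\le \|e_Q\|_r\Big(\int_{B_Q^*}\|\langle W\rangle_{B_Q^*}^{1/2}W^{-1/2}\|_\infty^{r'}|f|^{r'}\Big)^{1/r'}.
\]
By the NWO bound $\|e_Q\|_r\lesssim\mu(Q)^{1/r-1/2}$, we get
\[
\mu(Q)^{-1/2}|A_Qf|\lesssim\Big(\fint_{B_Q^*}\|\langle W\rangle_{B_Q^*}^{1/2}W^{-1/2}\|^{r'}|f|^{r'}\Big)^{1/r'}.
\]
A second H\"older step with exponents $t$ and $t'$ bounds this by
\[
\Big(\fint_{B_Q^*}\|\langle W\rangle^{1/2}W^{-1/2}\|^{r't}\Big)^{1/(r't)}\Big(\fint_{B_Q^*}|f|^{r't'}\Big)^{1/(r't')}.
\]
We choose $t$ so that $r't\le s$, where $s>2$ is the reverse H\"older exponent from \eqref{reverseholder}. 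We choose $r$ close enough to $2$, or $t$ appropriately, so that $r't'<2$. For instance, we can take $r't=s$ and $r'$ slightly above $1$, which gives $r't'=r's/(s-r')$; this is below $2$ once $r'<2s/(s+2)$.

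The first factor is uniformly bounded by the reverse H\"older inequality \eqref{reverseholder}, applied in its dual form. Since $W\in A_2$ iff $W^{-1}\in A_2$, the estimate holds with the roles of $W$ and $W^{-1}$ swapped. It is applied on balls $B_Q^*$, which are comparable to dyadic cubes from a finite collection of adjacent dyadic systems, or on balls directly, since $A_2$ over balls and over cubes are equivalent in spaces of homogeneous type. Thus $M_1f\lesssim (M(|f|^{u}))^{1/u}$ with $u=r't'<2$, where $M$ is the Hardy--Littlewood maximal operator over enlarged balls. This is bounded on $L^2$ since $2/u>1$. The operator $M_2$ is handled identically, with $\|\langle W\rangle^{-1/2}W^{1/2}\|$ in place of the first factor. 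Its reverse H\"older bound follows from the $A_2$ condition, because $\|\langle W\rangle^{-1/2}W^{1/2}(x)\|$ is controlled by $\|\langle W^{-1}\rangle^{1/2}W^{1/2}(x)\|$ up to $[W]_{A_2}$. Indeed, $\langle W^{-1}\rangle^{-1}\le\langle W\rangle$ as operators, and more sharply $\langle W\rangle\lesssim[W]_{A_2}\langle W^{-1}\rangle^{-1}$. So \eqref{reverseholder} applies directly.

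The main obstacle is the exponent bookkeeping together with the precise form of the reverse H\"older inequality. We need the matrix factors in $L^{r't}$ with $r't>2$, uniformly, while keeping $r't'<2$. This is exactly why the self-improvement $s>2$ in \eqref{reverseholder}, and its analogue for $W^{-1}$, is essential, and why the NWO condition must hold for $r$ arbitrarily large (i.e.\ $r'$ near $1$). A secondary technical point is transferring \eqref{reverseholder} from cubes to the balls $B_Q^*$. We will handle this via doubling, by comparing $\langle W\rangle_{B_Q^*}$ with the average over a dyadic cube of comparable size containing $B_Q^*$ from an adjacent dyadic system.
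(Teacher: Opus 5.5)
Your proposal is correct and is essentially the paper's argument. You insert reducing matrices, apply H\"older with the NWO exponent $r>2s/(s-2)$ against the reverse H\"older exponent $s$ so that $f$ and $g$ enter only through $L^u$ averages with $u<2$, and conclude with the $L^2$-boundedness of $(M|f|^u)^{1/u}$ and Cauchy--Schwarz. The remaining differences are cosmetic. You pair $\langle W\rangle_{B_Q^*}^{1/2}$ with $\langle W\rangle_{B_Q^*}^{-1/2}$ and use Jensen's inequality $\langle W\rangle^{-1}\le\langle W^{-1}\rangle$ inside $M_2$, whereas the paper pairs $\langle W\rangle_Q^{1/2}$ with $\langle W^{-1}\rangle_Q^{1/2}$ and pays the bounded factor $\|\langle W\rangle_Q^{-1/2}\langle W^{-1}\rangle_Q^{-1/2}\|$. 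Your averaging over $B_Q^*$ rather than $Q$ is the more accurate bookkeeping, since $e_Q,h_Q$ are supported on $B_Q^*$. One slip: your remark about choosing $r$ ``close enough to $2$'' is backwards, because $r$ must be large ($r'$ near $1$), as you state correctly elsewhere.
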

	
	\begin{proof} Choose $r>\frac{2s}{s-2}>2$, where $s$ is the exponent in \eqref{reverseholder} (depending only on $W$), so that $\|e_Q\|_r\lesssim \mu(Q)^{\frac{1}{r}-\frac{1}{2}}$ and $\|h_Q\|_r\lesssim \mu(Q)^{\frac{1}{r}-\frac{1}{2}}$.
 Inserting factors $\langle W^{-1}\rangle_Q^{\pm\frac{1}{2}}$ and $\langle W\rangle_Q^{\pm\frac{1}{2}}$ and applying the Cauchy--Schwarz inequality gives
			\begin{align*}
&{| \langle W^{-\frac12}e_Q,f\rangle\cdot \langle W^{\frac12}h_Q,g\rangle|}
			\\&={| \langle W\rangle_Q^{-\frac12}\langle W\rangle_Q^{\frac12}\langle W^{-\frac12}e_Q,f\rangle\cdot \langle W^{-1}\rangle_Q^{-\frac12}\langle W^{-1}\rangle_Q^{\frac12}\langle W^{\frac12}h_Q,g\rangle|}\\
			&\leq\big\|\langle W\rangle_Q^{-\frac12}\langle W^{-1}\rangle_Q^{-\frac12}\big\|\,{|\langle e_Q,\langle W\rangle_Q^{\frac12}W^{-\frac12}f\rangle| |\langle h_Q,\langle W^{-1}\rangle_Q^{\frac12}W^{\frac12}g\rangle|}. 
 \end{align*}
		Hence, by \eqref{matrix-A_2} and the Cauchy--Schwarz inequality,
		\begin{align*}
			&\|M_{\mathcal{E},W}(f,g)\|_{L^1(\mu)}
			\\&\lesssim \left\|\sup_{Q\in\mathscr{D}}\chi_{Q}(x)\frac{|\langle e_Q,\langle W\rangle_Q^{\frac{1}{2}}W^{-\frac{1}{2}}f\rangle| }{\mu(Q)^{\frac{1}{2}}}\right\|_{L^2(\mu)}\left\|\sup_{Q\in\mathscr{D}}\chi_{Q}(x)\frac{ |\langle h_Q,\langle W^{-1}\rangle_Q^{\frac{1}{2}}W^{\frac{1}{2}}g\rangle|}{\mu(Q)^{\frac{1}{2}}}\right\|_{L^2(\mu)}\\
 &=: \mathrm{I}\cdot\mathrm{II}.
		\end{align*}	
We estimate $\mathrm{II}$. By H\"older's inequality with $\frac{1}{\alpha}=1-\frac{1}{r}-\frac{1}{s}>1-\frac{s-2}{2s}-\frac{1}{s}=\frac{1}{2}$,
		\begin{align*}\mathrm{II}&\leq\left\|\sup_{Q\in\mathscr{D}}\chi_Q\frac{\big(\int_Q|\langle W^{-1}\rangle_Q^{\frac{1}{2}}W^{\frac{1}{2}}|^s\,d\mu\big)^{\frac{1}{s}}\|g\chi_{B_Q^*}\|_{\alpha}\|h_Q\|_r}{\mu(Q)^{\frac{1}{2}}}\right\|_{L^2(\mu)}\\
 &\lesssim_{[W]_{A_2}}\left\|\sup_{Q\in\mathscr{D}}\chi_Q\frac{\|g\chi_{B_Q^*}\|_{\alpha}}{\mu(Q)^{\frac{1}{\alpha}}}\right\|_{L^2(\mu)}\approx\|(M|g|^{\alpha})^{\frac{1}{\alpha}}\|_{L^2(\mu)}\lesssim \|g\|_{L^2(\mu)},
		\end{align*}
		where $M$ denotes the Hardy--Littlewood maximal operator; the final estimate uses the $L^2$-boundedness of $(M|g|^{\alpha})^{1/\alpha}$, which holds when $\alpha<2$.

		 The term $\mathrm{I}$ is handled identically: since $W^{-1}\in A_2$ if and only if $W\in A_2$ (see \eqref{matrix-A_2}), the same argument applies with $W$ and $W^{-1}$ interchanged, giving $\mathrm{I}\lesssim\|f\|_{L^2(\mu)}$. The claimed bound follows.
	\end{proof}

For a pair sequence $\mathcal{E}=(e_Q,h_Q)_{Q\in \mathscr{D}}$ of NWO sequences and a scalar sequence $(\lambda_Q)_{Q\in\mathscr{D}}$, define the operator
	$$T_{\mathcal{E}}:=\sum_{Q\in \mathscr{D}}\lambda_Q\, e_Q \otimes h_Q.$$
	Following \cite{Rochberg1,Rochberg}, we define the \emph{Carleson averaging operator} by
	$$\operatorname{Car}\lambda(P):=\frac{1}{\mu(P)}\sum_{Q\subset P}|\lambda_Q|\,\mu(Q),$$
 and say that $\lambda$ is a \emph{Carleson sequence} if $\operatorname{Car}\lambda\in \ell^\infty$.

	\begin{proposition}\label{T}Let $W$ be a matrix $A_2$ weight, let $\left\{e_{Q}\right\}_{Q \in \mathscr{D}}$ and $\left\{h_{Q}\right\}_{Q \in \mathscr{D}}$ be NWO sequences, and let $\lambda=(\lambda_Q)_{Q\in \mathscr{D}}$ be a Carleson sequence. Then
		\begin{equation}
			\|T_{\mathcal{E}}f\|_{L^2(W)}\lesssim \|\operatorname{Car} \lambda\|_\infty\, \|f\|_{L^2(W)}.
		\end{equation} 
	\end{proposition}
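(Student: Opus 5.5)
We prove the estimate by duality, reducing it to the bilinear--sublinear maximal operator of Lemma~\ref{bimaximal} and a Carleson embedding. The operator $e_Q\otimes h_Q$ acts on vector-valued $f$ by $f\mapsto e_Q\langle h_Q,f\rangle$, with $\langle h_Q,f\rangle\in\mathbb C^N$.

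\textbf{Reduction.} The map $f\mapsto W^{1/2}f$ is an isometry from $L^2(W)$ onto $L^2(\mu;\mathbb C^N)$. So the claim is equivalent to the boundedness of $W^{1/2}T_{\mathcal E}W^{-1/2}$ on $L^2(\mu;\mathbb C^N)$, that is,
\[
\Big|\sum_{Q\in\mathscr D}\lambda_Q\,\langle W^{-\frac12}h_Q,f\rangle\cdot\langle W^{\frac12}e_Q,g\rangle\Big|\lesssim\|\operatorname{Car}\lambda\|_\infty\|f\|_{L^2(\mu)}\|g\|_{L^2(\mu)}
\]
for all $f,g\in L^2(\mu;\mathbb C^N)$. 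Since the $e_Q,h_Q$ are scalar, they commute with $W^{\pm1/2}$, so the pairing has exactly this form. Complex conjugates and the order of the two factors are harmless.

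\textbf{Main estimate.} Write $F_Q:=|\langle W^{-\frac12}h_Q,f\rangle\cdot\langle W^{\frac12}e_Q,g\rangle|$ and apply the triangle inequality. Each term is then
\[
|\lambda_Q|F_Q=|\lambda_Q|\,\mu(Q)\cdot\frac{F_Q}{\mu(Q)}\le\int_X\Big(\sum_Q|\lambda_Q|\chi_Q\Big)\cdot\frac{F_Q}{\mu(Q)}\chi_Q\,d\mu .
\]
This pointwise bound alone is too crude. Instead we use the standard Carleson embedding: for nonnegative $a_Q:=\mu(Q)^{-1}F_Q$,
\[
\sum_Q|\lambda_Q|\mu(Q)a_Q\le\int_0^\infty\sum_{Q:\,a_Q>t}|\lambda_Q|\mu(Q)\,dt .
\]
Every $Q$ with $a_Q>t$ is contained in $\{x:\sup_{Q\ni x}a_Q>t\}=\{M_{\mathcal E',W}(f,g)>t\}$, where $\mathcal E'=(h_Q,e_Q)$ is the reversed pair. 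Group these $Q$ under the maximal dyadic cubes $P$ of that set. The Carleson condition gives $\sum_{Q\subset P}|\lambda_Q|\mu(Q)\le\|\operatorname{Car}\lambda\|_\infty\mu(P)$, and the maximal $P$ are disjoint. Hence the inner sum is at most $\|\operatorname{Car}\lambda\|_\infty\,\mu(\{M_{\mathcal E',W}(f,g)>t\})$. Integrating in $t$ gives
\[
\|\operatorname{Car}\lambda\|_\infty\,\|M_{\mathcal E',W}(f,g)\|_{L^1(\mu)}\lesssim\|\operatorname{Car}\lambda\|_\infty\|f\|_2\|g\|_2
\]
by Lemma~\ref{bimaximal}. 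That lemma is stated for the pair $(e_Q,h_Q)$, but it applies equally to $(h_Q,e_Q)$ since both are NWO sequences.

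\textbf{Expected obstacle.} The delicate point is the maximal-cube decomposition. Maximal dyadic cubes need not exist when there are arbitrarily large cubes, or when the same set $Q$ appears at several levels. To handle this, first restrict to finitely many $Q$, or to cubes of bounded size, prove the estimate with constants independent of the truncation, and pass to the limit by monotone convergence. With finitely many cubes, maximal elements exist. A cube recorded at several levels should be counted as distinct pairs $(Q,k)$, both in $\operatorname{Car}\lambda$ and in the grouping. Choosing $P$ as maximal pairs keeps the disjointness and the Carleson bound valid. Finally, the convergence of the series defining $T_{\mathcal E}$ is justified by the same absolute bound, applied first to finite sums.
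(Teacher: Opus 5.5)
Your proposal is correct and follows essentially the same route as the paper: conjugate by $W^{1/2}$, dualise, dominate the resulting bilinear sum by $\|\operatorname{Car}\lambda\|_\infty\,\|M(f,g)\|_{L^1(\mu)}$ via a Carleson embedding, and conclude with Lemma~\ref{bimaximal}. The differences are minor: you prove the Carleson embedding directly by the layer-cake and maximal-cube argument, including the truncation and multi-level bookkeeping, whereas the paper simply cites \cite[Proposition~13.2]{Hyto}; and you correctly note that for $T_{\mathcal E}=\sum\lambda_Q e_Q\otimes h_Q$ the pairing actually produces the reversed pair $(h_Q,e_Q)$, which the paper glosses over but which is harmless since Lemma~\ref{bimaximal} treats both NWO sequences symmetrically.
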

\begin{proof}
	Setting $g=W^{\frac{1}{2}}f$, the asserted estimate is equivalent to
\begin{equation}
		\|W^{\frac{1}{2}}T_{\mathcal{E}}W^{-\frac{1}{2}}g\|_{L^2(\mu,\mathbb{C}^N)}\lesssim \|\operatorname{Car}\lambda\|_\infty\,\|g\|_{L^2(\mu,\mathbb{C}^N)}.
\end{equation}
By duality it suffices to estimate, for $f,g\in L^2$,
\begin{align*}
	\sum_{Q\in \mathscr{D}}\lambda_Q |\langle e_Q,W^{-\frac{1}{2}}f\rangle \langle h_Q,W^{\frac{1}{2}}g\rangle|&=	\sum_{Q\in \mathscr{D}}\lambda_Q |\langle W^{-\frac{1}{2}}e_Q,f\rangle \langle W^{\frac{1}{2}}h_Q,g\rangle|
 \\	&\leq \|\operatorname{Car}\lambda\|_\infty\|M_{\mathcal{E},W}(f,g)\|_{L^1(\mu)},
\end{align*}
where the inequality follows from the Carleson embedding bound of \cite[Proposition~13.2]{Hyto}. The proof is completed by Lemma~\ref{bimaximal}.
\end{proof}
Adapting the arguments of \cite[Propositions 13.2 and 13.4, Corollary 13.6]{Hyto} to the matrix-weighted setting, we obtain the following.
\begin{corollary}\label{cor1} Let $W$ be a matrix $A_2$ weight, and let $\left\{e_{Q}\right\}_{Q \in \mathscr{D}}$ and $\left\{h_{Q}\right\}_{Q \in \mathscr{D}}$ be NWO sequences. Then for $p\in(0,\infty)$ and $q\in(0,\infty]$,
	\begin{equation}
		\|\sum_{Q\in \mathscr D}\lambda_Qe_Q \otimes h_Q\|_{S^{p,q}(L^2(W))}\lesssim \|M_{\mathcal E,W}\|_{L^2(\mu)\times L^2(\mu)\to L^1(\mu)}\|\lambda\|_{\ell^{p,q}}.
	\end{equation}
\end{corollary}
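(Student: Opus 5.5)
The plan is to reduce Corollary~\ref{cor1} to a scalar-index statement about sequences and operators, following the Rochberg--Semmes scheme as organised in \cite[Propositions 13.2, 13.4, Corollary 13.6]{Hyto}. The matrix weight enters only through the bilinear maximal operator $M_{\mathcal E,W}$.

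First, conjugate as in the proof of Proposition~\ref{T}. The map $f\mapsto W^{1/2}f$ is a unitary $L^2(W)\to L^2(\mu;\mathbb C^N)$, so
\[
\Big\|\sum_Q\lambda_Q e_Q\otimes h_Q\Big\|_{S^{p,q}(L^2(W))}=\|A\|_{S^{p,q}(L^2(\mu;\mathbb C^N))}.
\]
Here $A:=W^{1/2}T_{\mathcal E}W^{-1/2}$, and its bilinear form is
\[
\langle Ag,f\rangle=\sum_Q\lambda_Q\,\langle W^{-1/2}e_Q,f\rangle\cdot\langle W^{1/2}h_Q,g\rangle .
\]
The pairing conventions and conjugations need to be matched with \eqref{M_e}.

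Next, I would establish the $p=\infty$ endpoint in the form $\|A\|_{\mathscr L}\lesssim \|M_{\mathcal E,W}\|\,\|\operatorname{Car}\lambda\|_\infty$. The proof of Proposition~\ref{T} already gives this, once one uses the stated constant $\|M_{\mathcal E,W}\|$ in place of Lemma~\ref{bimaximal}.

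Then I would obtain the $S^{p,q}$ bound by decomposing $\lambda$ dyadically in size, as in \cite{Rochberg}. Fix $p$ and assume without loss of generality that $\lambda$ has finitely many nonzero terms, arranged in decreasing modulus $\lambda^*_k$. Let $\Lambda_j:=\{Q: 2^{-j-1}<|\lambda_Q|\le 2^{-j}\}$ and $A_j:=\sum_{Q\in\Lambda_j}\lambda_Q\,W^{1/2}(e_Q\otimes h_Q)W^{-1/2}$.

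For each $j$, I aim for two estimates. The operator norm bound is
\[
\|A_j\|\lesssim \|M_{\mathcal E,W}\|\,2^{-j}\sup_P\frac{1}{\mu(P)}\sum_{Q\subset P,\,Q\in\Lambda_j}\mu(Q),
\]
which is at most $\|M\|\,2^{-j}\,\#(\text{levels})$. This is not good enough by itself. The rank-type bound comes from the Rochberg--Semmes argument. One splits $\Lambda_j$ into a ``sparse'' part, where the Carleson constant is bounded, and a part controlled through stopping cubes. The number of maximal stopping cubes is at most $\#\Lambda_j$, and the operator on each stopping tree is approximated by finite rank with error controlled by the Carleson constant.

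Concretely, I would use the estimate \cite[Proposition 13.4]{Hyto}: the approximation numbers satisfy $a_{n}(A)\lesssim \|M_{\mathcal E,W}\|\,\lambda^*_{cn}$ with the tail truncated. That proposition is proved purely from the Carleson embedding \cite[Proposition 13.2]{Hyto} applied to the sequence $\lambda\mathbf 1_{\{|\lambda|\le \lambda^*_n\}}$, after removing the top $n$ terms. The removed terms contribute rank at most $n$. The remainder $\lambda'$ satisfies $|\lambda'_Q|\le\lambda_n^*$, and a stopping-time regrouping gives a sequence with Carleson norm $\lesssim\lambda^*_{cn}$ modulo a further rank $\lesssim n$ piece. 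Combining with the endpoint bound gives
\[
\mu_{Cn}(A)\lesssim\|M_{\mathcal E,W}\|\,\lambda^*_n .
\]
Taking the $\ell^{p,q}$ quasinorm over $n$ then yields $\|A\|_{S^{p,q}}\lesssim\|M_{\mathcal E,W}\|\,\|\lambda\|_{\ell^{p,q}}$, since dilating the index by $C$ is bounded on $\ell^{p,q}$.

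The main obstacle is this singular-value estimate $\mu_{Cn}(A)\lesssim \|M\|\lambda^*_n$, that is, verifying that the argument of \cite{Hyto} uses only the Carleson-embedding inequality
\[
\sum_Q|\lambda_Q|\,|\langle W^{-1/2}e_Q,f\rangle\cdot\langle W^{1/2}h_Q,g\rangle|\le\|\operatorname{Car}\lambda\|_\infty\|M_{\mathcal E,W}(f,g)\|_1
\]
and finite-rank truncations, and never uses scalar structure of the functions. I would check this step by step. The only change is that the rank-one pieces become $W^{1/2}e_Q\otimes W^{-1/2}h_Q\otimes I_N$-type operators of rank at most $N$, which only affects constants.
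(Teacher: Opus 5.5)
\emph{Gap in the key step.} Your conjugation to $A=W^{1/2}T_{\mathcal E}W^{-1/2}$ is fine. So is the endpoint $\|A\|\lesssim\|M_{\mathcal E,W}\|\,\|\operatorname{Car}\lambda\|_\infty$, and so is the remark that the weight enters only through the scalar Carleson embedding. The overall plan of adapting \cite[Propositions 13.2, 13.4, Corollary 13.6]{Hyto} is also the route the paper indicates. The gap is the step you call the main obstacle. The bound $\mu_{Cn}(A)\lesssim\|M_{\mathcal E,W}\|\,\lambda^*_n$ is false, so it is not what the Rochberg--Semmes argument delivers. The regrouping claim behind it is false too: after deleting $O(n)$ terms, a sequence bounded by $\lambda^*_n$ need not have Carleson norm $\lesssim\lambda^*_{cn}$.

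Here is a counterexample. Take $X=\R$, $W\equiv 1$, and $e_Q=h_Q=\mathbf 1_Q|Q|^{-1/2}$; this is an NWO pair with $M_{\mathcal E,W}(f,g)\le Mf\cdot Mg$. Set $\lambda_Q=1$ on the $2^{L+1}-1$ dyadic intervals $Q\subseteq[0,1)$ with $|Q|\ge 2^{-L}$, and $\lambda_Q=0$ otherwise, so $\lambda^*_n\le 1$. Test $A$ on the orthonormal functions $f_P=\mathbf 1_P|P|^{-1/2}$ with $|P|=2^{-m}$. In the matrix $(\langle Af_P,f_{P'}\rangle)_{P,P'}$, the intervals with $|Q|\ge 2^{-m}$ contribute a positive semidefinite matrix. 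The remaining intervals contribute only to the diagonal, namely $\sum_{Q\subsetneq P,\,|Q|\ge 2^{-L}}|Q|/|P|=L-m$. Hence $\mu_{2^m}(A)\ge L-m$ by min--max. Fixing $2^m\ge C$ and letting $L\to\infty$ contradicts $\mu_C(A)\lesssim\lambda^*_1=1$. Likewise, deleting any $Cn$ intervals leaves an untouched full subtree of depth about $L-\log_2(Cn)$, whose Carleson average is that large. None of this contradicts the corollary, since $\|\lambda\|_{\ell^{p,q}}\approx 2^{L/p}$. What $\lambda^*_n$ cannot see is how many cubes carry coefficients of comparable size.

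\emph{How to repair it.} You need the sizes of all the level sets, which the $\ell^{p,\infty}$ quasinorm controls. First prove $\|A\|_{S^{p,\infty}}\lesssim\|M_{\mathcal E,W}\|\,\|\lambda\|_{\ell^{p,\infty}}$ for every $p\in(0,\infty)$. Normalise $\|\lambda\|_{\ell^{p,\infty}}=1$ and let $t=n^{-1/p}$. Put the fewer than $n$ cubes with $|\lambda_Q|>t$ into the finite-rank part. Split the rest into $\Lambda_j=\{Q:2^{-j-1}t<|\lambda_Q|\le 2^{-j}t\}$, so that $\#\Lambda_j<2^{(j+1)p}n$.

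Fix $c$ with $\mu(R)\le(1-c)\mu(Q)$ for $R\subsetneq Q$, and take $\eta<c$. Call a member of a finite family of $m$ cubes $\eta$-sparse if its proper subcubes from the family cover at most a fraction $1-\eta$ of its measure. More than $m/2$ members are $\eta$-sparse, because a non-sparse member must have at least two maximal proper subcubes in the family. The sparse members have packing constant $\le\eta^{-1}$. Iterating this selection, you can discard at most $m/K$ cubes and leave packing $\lesssim 1+\log K$.

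Discard fewer than $2^{-j}n$ cubes from each $\Lambda_j$. What remains is a Carleson sequence with norm $\lesssim\sum_j(1+j)2^{-j}t\lesssim t$. All discarded terms together have rank $\le 3Nn$. Hence $\mu_{3Nn+1}(A)\lesssim\|M_{\mathcal E,W}\|\,n^{-1/p}$.

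The $S^{p,q}$ statement then follows by real interpolation between two such weak-type bounds at $p_0<p<p_1$. This uses the linearity of $\lambda\mapsto A$ together with $(\ell^{p_0,\infty},\ell^{p_1,\infty})_{\theta,q}=\ell^{p,q}$ and $(S^{p_0,\infty},S^{p_1,\infty})_{\theta,q}=S^{p,q}$, as in the interpolation step of Proposition~\ref{square}. Summing a pointwise bound in terms of $\lambda^*_n$ cannot replace this step.
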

By duality between the $S^{p,q}$ and $\ell^{p,q}$ spaces, Corollary~\ref{cor1} yields the following dual estimate.
\begin{lemma}\label{cor14.3}
	Let $p\in(1,\infty)$, $q\in[1,\infty]$, and let $W$ be a matrix $A_2$ weight. Let $\left\{e_{Q}\right\}_{Q \in \mathscr{D}}$ and $\left\{h_{Q}\right\}_{Q \in \mathscr{D}}$ be NWO sequences. Then
	$$\|\{\langle Ae_Q,h_Q\rangle\}_{Q\in\mathscr{D}}\|_{\ell^{p,q}(\mathscr{D})}\lesssim\|M_{\mathcal{E},W}\|_{L^2(\mu)\times L^2(\mu)\to L^1(\mu)} \|A\|_{S^{p,q}(L^2(W))}.$$
\end{lemma}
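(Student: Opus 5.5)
The plan is to obtain Lemma~\ref{cor14.3} from Corollary~\ref{cor1} by duality, using that for $p\in(1,\infty)$, $q\in[1,\infty]$ the Lorentz sequence space $\ell^{p,q}(\mathscr D)$ is normable and its norm is equivalent to
\[
\sup\Big\{\Big|\sum_Q \lambda_Q a_Q\Big| : \|\lambda\|_{\ell^{p',q'}}\le 1,\ \lambda \text{ finitely supported}\Big\}.
\]
For $q=\infty$ we take the supremum over finitely supported $\lambda$ in $\ell^{p',1}$, which still norms $\ell^{p,\infty}$. Similarly, the trace pairing $(A,B)\mapsto\tau(AB)$ satisfies $|\tau(AB)|\lesssim \|A\|_{S^{p,q}}\|B\|_{S^{p',q'}}$ on $H=L^2(W)$, by the Hölder inequality for singular values combined with the Lorentz Hölder inequality. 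By monotone convergence it then suffices to bound $|\sum_{Q\in\mathscr F}\lambda_Q\langle Ae_Q,h_Q\rangle|$ uniformly over finite families $\mathscr F\subset\mathscr D$ and $\|\lambda\|_{\ell^{p',q'}}\le1$.

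Next, interpret $\langle Ae_Q,h_Q\rangle$ as a trace against a rank-one operator. Here the pairing is the one in which $e_Q,h_Q$ are used in $T_{\mathcal E}$, namely the $L^2(W)$ inner product of vector-valued functions. The $e_Q,h_Q$ should be read as vector-valued objects, or coordinatewise $e_Q v_j$ with $v_j$ basis vectors, so that $\langle Ae_Q,h_Q\rangle=\tau(A\,(e_Q\otimes h_Q))$ up to conjugation and ordering conventions. Then
\[
\sum_{Q\in\mathscr F}\lambda_Q\langle Ae_Q,h_Q\rangle=\tau\Big(A\sum_{Q\in\mathscr F}\lambda_Q\, e_Q\otimes h_Q\Big)=\tau(A\,T_{\mathcal E,\lambda}),
\]
where $T_{\mathcal E,\lambda}$ is finite rank, so the trace makes sense. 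Hence
\[
\Big|\sum_{Q\in\mathscr F}\lambda_Q\langle Ae_Q,h_Q\rangle\Big|\lesssim\|A\|_{S^{p,q}(L^2(W))}\,\|T_{\mathcal E,\lambda}\|_{S^{p',q'}(L^2(W))}.
\]
Applying Corollary~\ref{cor1} with exponents $(p',q')$ gives $\|T_{\mathcal E,\lambda}\|_{S^{p',q'}}\lesssim\|M_{\mathcal E,W}\|\,\|\lambda\|_{\ell^{p',q'}}$. Here $p'\in(1,\infty)$ and $q'\in[1,\infty]$, which lies in the admissible range of that corollary. Taking the supremum over $\lambda$ yields the claim.

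The step I expect to need the most care is the bookkeeping of the pairing. The rank-one operators in Corollary~\ref{cor1} must be exactly the duals of the coefficient functionals $A\mapsto\langle Ae_Q,h_Q\rangle$ with respect to the trace on $L^2(W)$, with the weight placed consistently: $W^{\mp1/2}$ attached to $e_Q,h_Q$ as in \eqref{M_e}, since the rank-one operator is taken relative to the $L^2(W)$ inner product. If a mismatch occurs, I would conjugate by the unitary $f\mapsto W^{1/2}f$ from $L^2(W)$ to $L^2(\mu;\mathbb C^N)$. This unitary preserves singular values. It turns $e_Q\otimes h_Q$ into an operator on the unweighted space whose matrix coefficients are exactly the brackets $\langle W^{-1/2}e_Q,\cdot\rangle$ and $\langle W^{1/2}h_Q,\cdot\rangle$ controlled by $M_{\mathcal E,W}$. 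The dualities themselves ($\ell^{p,q}$ and $S^{p,q}$ norming by their Köthe duals for $p>1$) are standard, and constitute the only reason for the restriction $p\in(1,\infty)$, $q\ge1$.
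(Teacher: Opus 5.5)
Your argument is the paper's: trace duality between $S^{p,q}$ and $S^{p',q'}$ applied to the finite-rank operator $\sum_Q\lambda_Q e_Q\otimes h_Q$, then Corollary~\ref{cor1} at the dual exponents $(p',q')$, then the supremum over finitely supported $\lambda$ with $\|\lambda\|_{\ell^{p',q'}}\le 1$ via Lorentz-space duality. One bookkeeping correction: the bracket $\langle Ae_Q,h_Q\rangle$ and the tensors $e_Q\otimes h_Q$ must use the unweighted $L^2(\mu)$ pairing (as in the proof of Proposition~\ref{T}), not the $L^2(W)$ inner product. Only with the unweighted pairing does conjugation by $W^{1/2}$ put $W^{1/2}$ on one factor and $W^{-1/2}$ on the other, as in $M_{\mathcal E,W}$. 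With the $L^2(W)$ inner product both factors would carry $W^{1/2}$, and the left-hand side would then scale linearly under $W\mapsto cW$ while the right-hand side stays invariant.
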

\begin{proof}Let $\lambda=(\lambda_Q)_{Q\in \mathscr{D}}$ be a finitely supported sequence. By trace duality between $S^{p,q}$ and $S^{p',q'}$ followed by Corollary~\ref{cor1},
\begin{align*}
		\Big|\sum_{Q\in \mathscr{D}}\lambda_Q\langle Ae_Q,h_Q\rangle\Big|&=\Big|\tau\Big(W^{\frac{1}{2}}AW^{-\frac{1}{2}}\,W^{\frac{1}{2}}\sum_{Q\in \mathscr{D}}\lambda_Qe_Q\otimes h_Q\,W^{-\frac{1}{2}}\Big)\Big|\\
		&\leq\|W^{\frac{1}{2}}AW^{-\frac{1}{2}}\|_{S^{p,q}}\,\Big\|W^{\frac{1}{2}}\sum_{Q\in \mathscr{D}}\lambda_Qe_Q\otimes h_Q\,W^{-\frac{1}{2}}\Big\|_{S^{p',q'}}\\
		&\lesssim \|A\|_{S^{p,q}(L^2(W))}\,\|M_{\mathcal{E},W}\|_{L^2(\mu)\times L^2(\mu)\to L^1(\mu)}\,\|\lambda\|_{\ell^{p',q'}}.
	\end{align*}
	Taking the supremum over all finitely supported $\lambda$ with $\|\lambda\|_{\ell^{p',q'}}\leq 1$ and using  the duality of $\ell^{p,q}$ spaces, the claim follows.
\end{proof}

\section{The lower bound for Calder\'on--Zygmund commutators}\label{lower}
In this section we establish the lower bound in Theorem~\ref{main} using the NWO estimates from Section~\ref{NWO}. For each dyadic cube $Q$, we bound the mean oscillation $m_b(B_Q)$ by testing the commutator against suitable localised functions. The maximal-operator bound in Lemma~\ref{bimaximal} and the duality estimate in Lemma~\ref{cor14.3} then control the $\ell^{p,q}$ norm of these oscillations by the Schatten--Lorentz norm of the commutator.

\begin{proposition}\label{lowerbound}
Let $b \in L_{\mathrm{loc}}^{1}(\mu)$, and let $K$ be an $\omega$-Calder\'on--Zygmund kernel satisfying
\begin{equation*}
	\lim_{t \to 0} \omega(t)=0.
\end{equation*}
Let $p \in(1, \infty)$, $q \in[1, \infty]$, and $W \in A_{2}$. If $[b, T]$ is an operator with kernel $(b(x)-b(y)) K(x, y)$ and $[b, T] \in S^{p, q}\left(L^{2}(W)\right)$, then
$$
\|b\|_{\mathrm{Osc}^{p, q}} \lesssim\|[b, T]\|_{S^{p, q}\left(L^{2}(W)\right)}.
$$
\end{proposition}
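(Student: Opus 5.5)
The plan is to follow the scalar argument of \cite{Hyto}: for each $Q\in\mathscr D$, produce NWO functions $e_Q,h_Q$ with $\langle [b,T]e_Q,h_Q\rangle$ essentially dominating the mean oscillation $m_b(B_Q):=\inf_c\fint_{B_Q}|b-c|\,d\mu$. Then Lemma~\ref{cor14.3} and Lemma~\ref{bimaximal} give $\|\{m_b(B_Q)\}\|_{\ell^{p,q}}\lesssim\|[b,T]\|_{S^{p,q}(L^2(W))}$. The catch is that the commutator acts on $\mathbb C^N$-valued functions while $e_Q,h_Q$ are scalar. So I would test on $e_Q v$ and $h_Q u$ for unit vectors $u,v\in\mathbb C^N$, chosen so that the weights only contribute bounded factors. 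The pairing in Lemma~\ref{cor14.3} is then read as $\langle W^{1/2}AW^{-1/2}(e_Q v),h_Q u\rangle$ in the unweighted inner product, matching the trace identity in its proof.

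\textbf{Step 1 (local geometry).} Use $\lim_{t\to0}\omega(t)=0$ and the non-degeneracy of $K$ as in \cite{Hyto}. For $Q\in\mathscr D$ this gives a ball $\tilde B_Q$ of comparable radius at distance $\sim A\ell(Q)$ from $B_Q$ (with $A$ large and fixed), on which $K(x,y)$ for $x\in B_Q$, $y\in\tilde B_Q$ is close to a constant $K_Q$ with $|K_Q|\gtrsim 1/\mu(Q)$, up to a relative error $\omega(1/A)\ll1$. The same construction works with the roles of $x$ and $y$ swapped. Following \cite{Hyto}, one then has $m_b(B_Q)\lesssim \mu(Q)^{-1}|\langle [b,T](\phi_Q),\psi_Q\rangle|$ plus an error. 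Here $\phi_Q,\psi_Q$ are bounded by $\mu(Q)^{-1/2}$ and supported on $B_Q$ and $\tilde B_Q$ (with signs/phases given by $\operatorname{sgn}(b-c)$ on the median splitting), so they are NWO after rescaling. The error is absorbed either by choosing $A$ large or by a small multiple of $\|\{m_b(B_{Q'})\}\|_{\ell^{p,q}}$ over nearby cubes; I will use the oscillation estimate of \cite{Hyto} verbatim here.

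\textbf{Step 2 (vectorisation).} For a scalar $b$ we have $[b,T](\phi v)=([b,T]\phi)v$, hence
\[
\langle [b,T](\phi_Q v),\psi_Q v\rangle_{L^2(\mu;\mathbb C^N)}=\langle [b,T]\phi_Q,\psi_Q\rangle
\]
for any unit vector $v$. I will rewrite this as $\langle W^{1/2}[b,T]W^{-1/2}(W^{1/2}\phi_Q v),W^{-1/2}\psi_Q v\rangle$. This does not directly match the form $\langle W^{-1/2}e_Q,\cdot\rangle$ appearing in $M_{\mathcal E,W}$. Instead, I would apply Lemma~\ref{cor14.3}'s duality argument with the operators $\sum_Q\lambda_Q (\phi_Q v)\otimes(\psi_Q v)$, i.e. constant vectors. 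Corollary~\ref{cor1} bounds these in $S^{p',q'}(L^2(W))$ via $M_{\mathcal E,W}$, which is bounded by Lemma~\ref{bimaximal} since the functions are scalar NWO. The vector $v$ factors out through the pairing $u\cdot v$ in \eqref{M_e}. Thus the bound $\sum_Q\lambda_Q\langle[b,T]\phi_Q v,\psi_Qv\rangle\lesssim\|[b,T]\|_{S^{p,q}(L^2(W))}\|\lambda\|_{\ell^{p',q'}}$ holds. Since the left-hand side equals $\sum_Q\lambda_Q\langle[b,T]\phi_Q,\psi_Q\rangle$, duality yields $\|\{\langle[b,T]\phi_Q,\psi_Q\rangle\}\|_{\ell^{p,q}}\lesssim\|[b,T]\|_{S^{p,q}(L^2(W))}$.

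\textbf{Step 3 (conclusion and main obstacle).} Combining Steps 1 and 2 gives the $\ell^{p,q}$ bound for $m_b(B_Q)$, i.e. $\|b\|_{\mathrm{Osc}^{p,q}}$ by \eqref{osc-r}. Two points need care. First, the a priori finiteness needed to absorb errors: truncate to finitely many cubes, or use that the error is a genuinely small factor. Second, making sure the trace-duality pairing in Lemma~\ref{cor14.3} is exactly the unweighted bilinear form used above; I expect this bookkeeping of $W^{\pm1/2}$ to be the main obstacle. If it fails, I would instead pick $v$ depending on $Q$ as an eigenvector of $\langle W\rangle_{B_Q^*}$, so that the inserted weight factors are controlled by \eqref{matrix-A_2} and \eqref{reverseholder} exactly as in the proof of Lemma~\ref{bimaximal}.
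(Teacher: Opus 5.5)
Your architecture matches the paper's. Step 1 is \cite[Proposition 15.5]{Hyto}, applied with $E_1=B_1=B_Q$. It gives the clean bound $m_b(B_Q)\lesssim\sum_{j=1}^2|\langle[b,T]f_Q^j,g_Q^j\rangle|$ with $|f_Q^j|+|g_Q^j|\lesssim\mathbf 1_{B_Q^*}\mu(Q)^{-1/2}$. So there is no error term to absorb and no a priori finiteness issue. With test functions of size $\mu(Q)^{-1/2}$ there is also no extra factor $\mu(Q)^{-1}$. Lemma~\ref{bimaximal} and Lemma~\ref{cor14.3} then finish the proof.

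The genuine gap is in your primary Step 2. The test operator $(\phi_Qv)\otimes(\psi_Qv)=\phi_Q\otimes\psi_Q\otimes vv^*$ is not of the type controlled by Corollary~\ref{cor1}, and $v$ does not factor out of \eqref{M_e}. Set $a_Q=\int\overline{\psi_Q}W^{-1/2}f\,d\mu$ and $b_Q=\int\overline{\phi_Q}W^{1/2}g\,d\mu$. Then the conjugated pairing $\langle W^{1/2}(\phi_Q\otimes\psi_Q\otimes vv^*)W^{-1/2}f,g\rangle$ equals $(a_Q\cdot v)(v\cdot b_Q)$, not the dot product $a_Q\cdot b_Q$ that appears in \eqref{M_e}. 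Inserting averages as in the proof of Lemma~\ref{bimaximal} now costs the factor $|\langle W\rangle_Q^{-1/2}v|\,|\langle W^{-1}\rangle_Q^{-1/2}v|\approx|\langle W\rangle_Q^{-1/2}v|\,|\langle W\rangle_Q^{1/2}v|$. This is a condition-number quantity, not controlled by $[W]_{A_2}$.

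For a concrete example, take the constant weight $W=\operatorname{diag}(M,M^{-1})$, which has $[W]_{A_2}=1$, and $v=(1,1)/\sqrt2$. The conjugated operator is $\big(\sum_Q\lambda_Q\phi_Q\otimes\psi_Q\big)\otimes(W^{1/2}v)(W^{-1/2}v)^*$. Its $S^{p',q'}$ norm is $\frac{M+M^{-1}}{2}$ times the scalar one, so the claimed bound fails. The trace bookkeeping you worried about is harmless, since the trace is invariant under conjugation by $W^{\pm1/2}$; the real issue is the norm of the test operators.

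Your fallback does repair this. If $v_Q$ is an eigenvector of $\langle W\rangle_Q$, then $\langle W^{-1}\rangle_Q^{-1}\le\langle W\rangle_Q$ makes the factor above at most $1$, and the rest of Lemma~\ref{bimaximal} goes through.

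A simpler route, and the one the paper's lemmas implicitly use, is to test with the diagonal operator $\phi_Q\otimes\psi_Q\otimes I_N$. This is how the scalar tensors in Corollary~\ref{cor1} and Lemma~\ref{cor14.3} act on $L^2(W)$. Its conjugated pairing is exactly $a_Q\cdot b_Q$, which is controlled through $\|\langle W\rangle_Q^{-1/2}\langle W^{-1}\rangle_Q^{-1/2}\|\le1$ (operator convexity of $t\mapsto t^{-1}$). Moreover $\tau\big([b,T](\phi_Q\otimes\psi_Q\otimes I_N)\big)=N\langle[b,T]\phi_Q,\psi_Q\rangle$. So Lemma~\ref{cor14.3} applies directly, with no choice of vectors.
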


\begin{proof} For each $Q \in \mathscr{D}$, we apply \cite[Proposition 15.5]{Hyto} with $E_{1}=B_{1}=B_{Q}$ to produce functions $f_{Q}^{j}$ and $g_{Q}^{j}$ such that
\begin{equation*}
	m_{b}\left(B_{Q}\right)=\frac{1}{\mu\left(B_{Q}\right)} \int_{B_{Q}}\left|b-\langle b\rangle_{Q}\right|\, d\mu \lesssim \sum_{j=1}^{2}\left|\left\langle[b, T] f_{Q}^{j}, g_{Q}^{j}\right\rangle\right|,
\end{equation*}
and
\begin{equation*}
	\left|f_{Q}^{j}\right|+\left|g_{Q}^{j}\right| \lesssim \frac{\mathbf{1}_{B_{Q}^{*}}}{\mu(Q)^{\frac{1}{2}}},
\end{equation*}
where $B_{Q}^{*}=c\cdot B_{Q}$ is a concentric dilation by a factor $c$ depending only on the implied constants in \cite[Proposition 15.5]{Hyto}, so that $B_{Q}^{*}$ contains both $E_{1}=B_{1}=B_{Q}$ and $E_{2}\subseteq B_{2}$. Summing over $Q\in\mathscr{D}$ and applying Lemma~\ref{bimaximal} and Lemma~\ref{cor14.3}, we obtain
\begin{align*}
\left\|\left\{m_{b}\left(B_{Q}\right)\right\}_{Q \in \mathscr{D}}\right\|_{\ell^{p, q}} &\lesssim \sum_{j=1}^{2}\|M_{\mathcal{E},W}\|_{L^2(\mu)\times L^2(\mu)\to L^1(\mu)} \|[b, T]\|_{S^{p, q}\left(L^{2}(W)\right)}
	\\
	&\lesssim\|[b, T]\|_{S^{p, q}\left(L^{2}(W)\right)},
\end{align*}
which is the desired bound.
\end{proof}

\section{Commutators of dyadic paraproducts}\label{para}
This section establishes the upper bound for the Schatten--Lorentz norm of commutators of dyadic paraproducts, which will be needed in the analysis of general Calder\'on--Zygmund operators in Section~\ref{upper}.

	For each dyadic cube $Q \in \mathscr{D}$, we define the martingale difference operators
$$
\mathbb{D}_{Q}:=\sum_{Q^{\prime} \in \operatorname{ch}(Q)} \mathbb{E}_{Q^{\prime}}-\mathbb{E}_{Q}, \quad [\mathbb{D}]_{Q}:=\sum_{Q^{\prime} \in [\operatorname{ch}](Q)} \mathbb{E}_{Q^{\prime}}-\mathbb{E}_{Q},
$$
where the first form implicitly uses the level information $Q=(Q,k)$, while the second depends only on $Q$ as a set.
For $b \in \operatorname{BMO}(\mathscr{D})$, the associated dyadic paraproduct is
$$
\Pi_{b}:=\sum_{k \in \mathbb{Z}} \sum_{Q \in \mathscr{D}_{k}} \mathbb{D}_{Q} b \otimes \frac{\chi_Q}{\mu(Q)}=\sum_{Q \in \mathscr{D}}[\mathbb{D}]_{Q}b \otimes \frac{\chi_Q}{\mu(Q)}
$$
 and the operators $\Gamma_b^\alpha$ are intermediate
paraproduct-type operators defined as 
$$\Gamma_b^\alpha:=\sum_{P\in\mathscr D}(\mathbb D_Pb)h_P^\alpha\otimes h_P^\alpha.$$
For $r\in (1,\infty)$ and $f\in L^r(X,\rho,\mu)$, we set
$$\|f\|_{\text{\L}^{r}(Q)}:=\Big(\frac{1}{\mu(Q)}\int_Q|f|^r\,d\mu\Big)^{\frac{1}{r}}.$$

The summands of $\Pi_b$ are rank-one tensors built from NWO sequences, so the Carleson-type estimate of Corollary~\ref{cor1} applies directly; the oscillation norm enters through the pointwise bound $\|[\mathbb{D}]_Q b\|_\infty\lesssim\|b-\langle b\rangle_Q\|_{\text{\L}^{1}(Q)}$.

\begin{proposition}\label{4.1}
Let $W\in A_2$, let $b\in L^1_{\mathrm{loc}}(\mu)$, and let
$\beta\in\mathrm{BMO}(\mathscr D)$. For all $p\in(0,\infty)$ and
$q\in(0,\infty]$,
\[
\begin{aligned}
 \|\Pi_b\|_{S^{p,q}(L^2(W))}
 &\lesssim \|b\|_{\mathrm{Osc}^{p,q}(\mathscr D)}, \\
 \|[b,\Pi_\beta]\|_{S^{p,q}(L^2(W))}
 &\lesssim \|b\|_{\mathrm{Osc}^{p,q}(\mathscr D)}\,\|\beta\|_{\mathrm{BMO}(\mathscr D)}, \\
 \|\Gamma_b^\alpha\|_{S^{p,q}(L^2(W))}
 &\lesssim \|b\|_{\mathrm{Osc}^{p,q}(\mathscr D)}, \\
 \|[b,\Gamma_\beta^\alpha]\|_{S^{p,q}(L^2(W))}
 &\lesssim \|b\|_{\mathrm{Osc}^{p,q}(\mathscr D)}\,\|\beta\|_{\mathrm{BMO}(\mathscr D)} .
\end{aligned}
\]
\end{proposition}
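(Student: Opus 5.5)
The plan is to write each of the four operators as a sum $\sum_{Q}\lambda_Q\, e_Q\otimes h_Q$ with $(e_Q)$, $(h_Q)$ NWO sequences and coefficients satisfying $|\lambda_Q|\lesssim \|b-\langle b\rangle_Q\|_{\text{\L}^1(Q)}$ (times $\|\beta\|_{\mathrm{BMO}(\mathscr D)}$ in the commutator cases). Then I would invoke Corollary~\ref{cor1} together with Lemma~\ref{bimaximal}, which gives the bound $\|M_{\mathcal E,W}\|\lesssim 1$. Since $Q\subseteq B_Q$ and $\mu(B_Q)\lesssim\mu(Q)$, we have $\inf_c\|b-c\|_{\text{\L}^1(Q)}\lesssim \inf_c\langle|b-c|\rangle_{B_Q}$. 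The resulting $\ell^{p,q}$ norm of $(\lambda_Q)$ is therefore dominated by $\|b\|_{\mathrm{Osc}^{p,q}(\mathscr D)}$. Note that $\inf_c$ and the choice $c=\langle b\rangle_Q$ are comparable up to a factor $2$.

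For $\Pi_b$, I take $\lambda_Q=\|[\mathbb D]_Qb\|_\infty$, $e_Q=[\mathbb D]_Qb/(\lambda_Q\mu(Q)^{1/2})$ and $h_Q=\chi_Q/\mu(Q)^{1/2}$. Both are supported in $Q\subseteq B_Q^*$, and both satisfy $\|\cdot\|_r\lesssim\mu(Q)^{1/r-1/2}$, because $[\mathbb D]_Qb$ is constant on children and bounded by $\lambda_Q$. The pointwise bound $\lambda_Q\lesssim\|b-\langle b\rangle_Q\|_{\text{\L}^1(Q)}$ holds because children have measure comparable to $Q$. For $\Gamma^\alpha_b$ the argument is identical: the Haar functions satisfy $|h_P^\alpha|\lesssim \mu(P)^{-1/2}\chi_P$, so $h^\alpha_P$ is NWO, and $\lambda_P=\mathbb D_Pb$, read as the relevant Haar coefficient scaled by $\mu(P)^{-1/2}$, is again bounded by the local oscillation.

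For the commutators I would expand
\[
[b,\Pi_\beta]=\sum_Q\Big((b-\langle b\rangle_Q)[\mathbb D]_Q\beta\otimes\frac{\chi_Q}{\mu(Q)}-[\mathbb D]_Q\beta\otimes\frac{(b-\langle b\rangle_Q)\chi_Q}{\mu(Q)}\Big),
\]
subtracting the constant $\langle b\rangle_Q$ in each term, which is legitimate since the two pieces cancel. The factor $b-\langle b\rangle_Q$ is not bounded, so a single-scale NWO normalisation fails. I would decompose $(b-\langle b\rangle_Q)\chi_Q=\sum_{R\subseteq Q}[\mathbb D]_Rb\,\chi_R$, a martingale expansion. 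Each piece $[\mathbb D]_Rb\chi_R$ paired with the scale-$Q$ factor gives a rank-one term indexed by $R$, with coefficient $\|[\mathbb D]_Rb\|_\infty$. Regrouping by $R$ yields
\[
\sum_R\|[\mathbb D]_Rb\|_\infty\, \tilde e_R\otimes\tilde h_R,\qquad \tilde h_R:=\mu(R)^{1/2}\sum_{Q\supseteq R}\frac{[\mathbb D]_Q\beta\,\chi_Q}{\mu(Q)},
\]
together with the analogous term for the other half. In the first half the roles are reversed: $\tilde e_R$ carries $\chi_R$ and $\tilde h_R$ carries the sum over ancestors.

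The main obstacle is showing that $\tilde h_R$, restricted to $R$, is NWO with constants $\lesssim\|\beta\|_{\mathrm{BMO}}$. On $R$ this sum equals $\mu(R)^{1/2}$ times a truncated martingale of $\beta$ divided by the measures of the ancestors, and it is not simply bounded. I would instead use the telescoping identity $\sum_{Q\supsetneq R}[\mathbb D]_Q\beta\,\chi_R=\langle\beta\rangle_R-\langle\beta\rangle_{Q_0}$ and rewrite the commutator more cleverly, following the scalar argument of \cite{Hyto}. The expansion gives $[b,\Pi_\beta]=\Pi_\beta$-type terms $+\sum_R [\mathbb D]_R b\,(\ldots)$ in which the coefficient of $\beta$ appearing at scale $R$ is the oscillation $\beta-\langle\beta\rangle_R$ on $R$. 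By John--Nirenberg, $\|(\beta-\langle\beta\rangle_R)\chi_R\|_r\lesssim\|\beta\|_{\mathrm{BMO}}\mu(R)^{1/r}$ for every $r<\infty$, which is exactly the NWO condition. This also explains why NWO is required for all $r$: Lemma~\ref{bimaximal} needs $r>2s/(s-2)$. If the regrouping produces several such sums, with para-products of both orders, each is treated in the same way. The quasi-triangle inequality in $S^{p,q}$ handles the finite number of pieces, and the same reasoning applies verbatim to $[b,\Gamma^\alpha_\beta]$.
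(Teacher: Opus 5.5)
Your treatment of $\Pi_b$ and $\Gamma^\alpha_b$ is fine and matches the paper. The paper reduces $\Gamma^\alpha_b$ to operators of the form of $\Pi_b$ via \cite[Lemma~14.12]{Hyto}; your direct argument with $\lambda_P=\|\mathbb D_Pb\|_\infty$ and $e_P=(\mathbb D_Pb)h^\alpha_P/\lambda_P$ also works.

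\textbf{The gap is in the commutator part, and it starts from a false obstacle.} NWO$(r)$ is an $L^r$ condition, not an $L^\infty$ one. Moreover, the proof of Lemma~\ref{bimaximal}, and hence Corollary~\ref{cor1}, uses only one exponent $r>2s/(s-2)$. So the single-scale decomposition you wrote down already works, provided you normalise the unbounded factor in $L^r$. Take $\lambda_Q:=\|b-\langle b\rangle_Q\|_{\text{\L}^r(Q)}\|[\mathbb D]_Q\beta\|_\infty$. Then the functions
\[
\frac{(b-\langle b\rangle_Q)[\mathbb D]_Q\beta}{\lambda_Q\,\mu(Q)^{1/2}},\qquad \frac{\chi_Q}{\mu(Q)^{1/2}},\qquad \frac{[\mathbb D]_Q\beta}{\|[\mathbb D]_Q\beta\|_\infty\,\mu(Q)^{1/2}},\qquad \frac{(b-\langle b\rangle_Q)\chi_Q}{\|b-\langle b\rangle_Q\|_{\text{\L}^r(Q)}\,\mu(Q)^{1/2}}
\]
are supported in $Q$ and have $L^r$ norm at most $\mu(Q)^{1/r-1/2}$. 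Corollary~\ref{cor1} then bounds $\|[b,\Pi_\beta]\|_{S^{p,q}(L^2(W))}$ by
\[
\bigl\|\{\|b-\langle b\rangle_Q\|_{\text{\L}^r(Q)}\}_Q\bigr\|_{\ell^{p,q}}\,\sup_Q\|[\mathbb D]_Q\beta\|_\infty\lesssim\|b\|_{\mathrm{Osc}_r^{p,q}}\|\beta\|_{\mathrm{BMO}(\mathscr D)}.
\]
The passage from exponent $r$ back to exponent $1$ is absorbed by the equivalence \eqref{osc-r}. This is exactly the paper's proof. The ingredient your proposal lacks is the combination of $L^r$ normalisation with \eqref{osc-r}. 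Note also that NWO for every $r$ is not what is needed: a single large $r$, depending on $W$ through $s$, suffices, and that is what makes the normalisation legitimate.

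\textbf{The route you propose instead does not work as stated.} In your martingale regrouping, $\tilde h_R$ lives on the union of all ancestors of $R$, not in $B_R^*$. So it is not an NWO function, and it cannot simply be ``restricted to $R$''.

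The fallback (``rewrite the commutator more cleverly'') supplies no identity, and the structure you predict does not occur. Take a single piece $[\mathbb D]_Rb$. It is constant on every $Q\subsetneq R$, so all terms of $[[\mathbb D]_Rb,\Pi_\beta]$ indexed by $Q\subsetneq R$ cancel exactly. Only $Q\supseteq R$ contribute, and the oscillation $\beta-\langle\beta\rangle_R$ never arises. If you resum these ancestor terms at a fixed $Q$, you recover $[\mathbb D]_Q\beta\,(b-\langle b\rangle_{Q'})$ on the children $Q'$ of $Q$. That is, you are back to the unbounded factor at scale $Q$, which can only be handled by the $L^r$ normalisation above. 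The same remarks apply verbatim to $[b,\Gamma^\alpha_\beta]$.
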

\begin{proof}
By \cite[Lemma~14.12]{Hyto}, each $\Gamma_b^\alpha$ is a finite linear
combination of paraproduct-type operators of the same form as $\Pi_b$. It is
therefore enough to prove the estimates for $\Pi_b$ and $[b,\Pi_\beta]$; the
proof for the corresponding $\Gamma_b^\alpha$ is identical.

		By definition of the weighted Schatten--Lorentz norm, $$	\left\|\Pi_{b}\right\|_{S^{p, q}\left(L^{2}(W)\right)}=	\left\|W^{\frac12}\Pi_{b}W^{-\frac12}\right\|_{S^{p,q}\left(L^2(\mu,\mathbb C^N)\right)}.$$
	We write
	$$
		W^{\frac12}\Pi_{b}W^{-\frac12}=W^{\frac12}\sum_{Q \in \mathscr{D}}[\mathbb{D}]_{Q} b \otimes \frac{\chi_Q}{\mu(Q)}W^{-\frac12}=\sum_{Q \in \mathscr{D}}\left\|[\mathbb{D}]_{Q} b\right\|_{\infty} W^{\frac12}e_{Q} \otimes h_{Q}W^{-\frac12},
		$$
	where
	$
	e_{Q}:=\frac{[\mathbb{D}]_{Q} b}{\left\|[\mathbb{D}]_{Q} b\right\|_{\infty} \mu^{\frac12}(Q)}
	$ and $
	h_{Q}:=\frac{\chi_Q }{ \mu(Q)^{\frac12}}
	$.
	For every $r\in(2,\infty)$,
	$$\|e_Q\|_r=\left\|\frac{[\mathbb{D}]_{Q} b}{\left\|[\mathbb{D}]_{Q} b\right\|_{\infty} \mu^{\frac12}(Q)}\right\|_r=\frac{\left\|[\mathbb{D}]_{Q} b\right\|_r}{\left\|[\mathbb{D}]_{Q} b\right\|_{\infty} \mu^{\frac12}(Q)}\leq \mu(Q)^{\frac1r-\frac12} $$
	and 	$$\|h_Q\|_r=\left\|\frac{\chi_Q }{ \mu(Q)^{\frac12}}\right\|_r= \mu(Q)^{\frac1r-\frac12},$$
	so that $(e_Q)_{Q\in\mathscr D}$ and $(h_Q)_{Q\in\mathscr D}$ are NWO sequences. Hence, by Corollary \ref{cor1} and Lemma~\ref{bimaximal},
	$$	\left\|\Pi_{b}\right\|_{S^{p, q}\left(L^{2}(W)\right)} \lesssim\left\|\left\{\left\|[\mathbb{D}]_{Q} b\right\|_{\infty}\right\}_{Q \in \mathscr{D}}\right\|_{\ell^{p, q}}.$$
		Moreover, by \cite[Lemma 19.2]{Hyto},
				$$
		\left\|[\mathbb{D}]_{Q} b\right\|_{\infty} \lesssim\left\|b-\langle b\rangle_{Q}\right\|_{\text{\L}^{1}(Q)}:=\fint_{Q}\left|b-\langle b\rangle_{Q}\right| \mathrm{d} \mu,
		$$
	and therefore
	$$
	\begin{aligned}
	\left\|\Pi_{b}\right\|_{S^{p, q}\left(L^{2}(W)\right)} 
		& \lesssim\left\|\left\{\left\|b-\langle b\rangle_{Q}\right\|_{\text{\L}^{1}(Q)}\right\}_{Q \in \mathscr{D}}\right\|_{\ell^{p, q}}=\|b\|_{\mathrm{Osc}^{p, q}(\mathscr{D})} .
	\end{aligned}
	$$
	This proves the first estimate.

	We turn to the commutator. Fix $r$ sufficiently large, as in Corollary~\ref{cor1}, and set
		$$
	e_{Q}^1:=\frac{\left(b-\langle b\rangle_{Q}\right)[\mathbb{D}]_{Q} \beta}{\left\|b-\langle b\rangle_{Q}\right\|_{\text{\L}^r(Q)}\|[\mathbb{D}]_{Q}\beta\|_\infty \mu(Q)^{\frac12}}, \quad h_{Q}^1:=\frac{\chi_Q}{\mu(Q)^{\frac12}}, 
	$$
	$$
	e_{Q}^2:=\frac{[\mathbb{D}]_{Q} \beta}{\|[\mathbb{D}]_{Q}\beta\|_\infty
		\mu(Q)^{\frac12}}, \quad h_{Q}^2:=\frac{\left(b-\langle b\rangle_{Q}\right) \chi_Q}{\left\|b-\langle b\rangle_{Q}\right\|_{\text{\L}^r(Q)}\mu(Q)^{\frac12} }, 
	$$
	$$
	\lambda^1_Q=\lambda^2_Q= \left\|b-\langle b\rangle_{Q}\right\|_{\text{\L}^{r}(Q)}\|[\mathbb{D}]_{Q}\beta\|_\infty. 
	$$
	The $r$-norms of $e_Q^{j}$ and $h_Q^{j}$ ($j=1,2$) are bounded by $\mu(Q)^{\frac1r-\frac12}$; hence each of these is an $\mathrm{NWO}(r)$ sequence.
	With this notation,
	\begin{align*}
		W^{\frac12}{\left[b, \Pi_{\beta}\right] } 	W^{-\frac12}& =\sum_{Q \in \mathscr{D}}	W^{\frac12}\left(b[\mathbb{D}]_{Q} \beta \otimes \frac{\chi_Q}{\mu(Q)}-[\mathbb{D}]_{Q} \beta \otimes \frac{\chi_Q b}{\mu(Q)}\right)	W^{-\frac12} \\
		& =\sum_{Q \in \mathscr{D}}	W^{\frac12}\left((b-\langle b\rangle_Q)[\mathbb{D}]_{Q} \beta \otimes \frac{\chi_Q}{\mu(Q)}-[\mathbb{D}]_{Q} \beta \otimes \frac{\chi_Q(b-\langle b\rangle_Q)}{\mu(Q)}\right)	W^{-\frac12} \\
		& =\sum_{Q \in \mathscr{D}}	W^{\frac12}(\lambda^1_Qe^1_Q\otimes h^1_Q-\lambda^2_Qe^2_Q\otimes h^2_Q)	W^{-\frac12}.
	\end{align*}
	Consequently, by Corollary~\ref{cor1},
	\begin{align*}
		\left\|\left[b, \Pi_{\beta}\right]\right\|_{S^{p, q}\left(L^{2}(W)\right)}&=\left\|W^{\frac12}\left[b, \Pi_{\beta}\right]W^{-\frac12}\right\|_{S^{p, q}\left(L^2(\mu,\mathbb C^N)\right)}\\
			& \lesssim\left\|\left\{\left\|b-\langle b\rangle_{Q}\right\|_{\text{\L}^{r}(Q)}\|[\mathbb{D}]_{Q}\beta\|_\infty\right\}_{Q \in \mathscr{D}}\right\|_{\ell^{p, q}} \\
				& \lesssim\left\|\left\{\left\|b-\langle b\rangle_{Q}\right\|_{\text{\L}^{r}(Q)}\right\}_{Q \in \mathscr{D}}\right\|_{\ell^{p, q}}\sup_{Q \in \mathscr{D}}\|[\mathbb{D}]_{Q}\beta\|_\infty \\
		& \lesssim\left\|\left\{\left\|b-\langle b\rangle_{Q}\right\|_{\text{\L}^{r}(Q)}\right\}_{Q \in \mathscr{D}}\right\|_{\ell^{p, q}}\|\beta\|_{\mathrm{BMO}} \\
		&=\|b\|_{\mathrm{Osc}_{r}^{p, q}(\mathscr{D})}\|\beta\|_{\mathrm{BMO}} \approx\|b\|_{\mathrm{Osc}^{p, q}(\mathscr{D})}\|\beta\|_{\mathrm{BMO}} ,
	\end{align*}
where the final equivalence is \eqref{osc-r}.

\end{proof}
	
	\section{Commutators for dyadic shifts}\label{shift}
	In this section we establish the upper bound for the Schatten--Lorentz norm of commutators of dyadic shifts. As with the paraproducts, this estimate feeds into the analysis of general Calder\'on--Zygmund operators in Section~\ref{upper}; the new feature here is that the bound depends quantitatively on the complexity $(i,j)$ of the shift through the geometric dimension $\Delta$, which is what ultimately produces the threshold $p(\eta)$. We first recall the relevant definitions.
	\begin{definition}\label{lab-S}
			A dyadic shift of complexity $(i, j)$ is an operator of the form
		\begin{equation*}
			\mathbb{S}=\sum_{k \in \mathbb{Z}} \sum_{R \in \mathscr{D}_{k}} A_{(R, k)}, \quad A_{(R, k)}=\sum_{\substack{P \in \operatorname{ch}^{i}(R, k) \\ Q \in \operatorname{ch}^{j}(R, k)}} a_{P Q R} h_{P} \otimes h_{Q} 
		\end{equation*}
		where $h_{P}, h_{Q}$ are cancellative Haar functions on their respective cubes $P, Q$, and $a_{P Q R}$ are coefficients. 
		The shift is said to be normalised if
	$$
	\left|a_{P Q R}\right| \lesssim \frac{\sqrt{\mu(P) \mu(Q)}}{\mu(R)}.
	$$
	\end{definition}

The main result of this section is the following commutator bound, whose dependence on the complexity $(i,j)$ is the source of the threshold $p(\eta)$ in Theorem~\ref{main}. Its proof, given at the end of the section, reduces---via the commutator identity for shifts---to a paraproduct estimate together with the weighted Schatten bound for a single shift (Proposition~\ref{square}).
\begin{theorem}\label{5.2}Let $(X, \rho, \mu)$ be a space of homogeneous type of geometric dimension $\Delta$, and let $W \in A_{2}$. Let $\mathbb{S}$ be a normalised dyadic shift of complexity $(i, j)$ on $L^{2}(\mu)$, and let $b \in L_{\text {loc }}^{1}(\mu)$. Then for all $p \in(0, \infty)$ and $q \in(0, \infty]$, we have
	$$
	\|[b, \mathbb{S}]\|_{S^{p, q}\left(L^{2}(W)\right)} \lesssim \delta^{-(i \wedge j) \Delta\left(\frac{1}{p}-\frac{1}{2}\right)_{++}}(1+i \wedge j)^{\frac{1}{p}}\|b\|_{\mathrm{Osc}^{p, q}},
	$$
	where $\delta \in(0,1)$ is the parameter from the dyadic system $\mathscr{D}$ and $$(\frac1p-\frac12)_{++}:=\begin{cases}1/p-1/2, &\text{ if } p<2,\\
 0, &\text{ if } p>2,\\
 \text {any  }\varepsilon>0, &\text{  if } p=2.
 \end{cases}$$ The factor $(1+i \wedge j)^{\frac{1}{p}}$ can be omitted, if $(X, \rho, \mu)$ has positive lower dimension $d>0$.

\end{theorem}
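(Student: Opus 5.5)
The plan follows the scalar argument of \cite{Hyto}, with the matrix-weighted NWO machinery of Section~\ref{NWO} in place of the unweighted one. Fix $R\in\mathscr D_k$ and write $A_R:=A_{(R,k)}$. The commutator splits as
\[
[b,A_R]=[b-\langle b\rangle_R,A_R]
=(b-\langle b\rangle_R)A_R-A_R\big((b-\langle b\rangle_R)\,\cdot\big).
\]
Inside this expression we decompose $(b-\langle b\rangle_R)\chi_R$ by martingale differences:
\[
(b-\langle b\rangle_R)\chi_R=\sum_{S\subseteq R,\ \ell(S)>\delta^{i\wedge j}\ell(R)}\mathbb D_S b+\sum_{S\in\operatorname{ch}^{i\wedge j}(R)}(b-\langle b\rangle_S)\chi_S .
\]
Recall that $h_P$ is constant on the children of $P$, that $P\in\operatorname{ch}^i(R)$ and $Q\in\operatorname{ch}^j(R)$, and that each $\mathbb D_Sb$ is constant on the children of $S$. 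Hence the terms with $S$ strictly above the level of $P$ (resp.\ $Q$) multiply $h_P$ (resp.\ $h_Q$) by a constant. Summed over $R$, these terms recombine into finitely many paraproduct-type operators of the forms $\Pi$ and $\Gamma^\alpha$. This is the commutator identity for shifts from \cite{Hyto}, which we quote. There are at most $1+i\wedge j$ intermediate levels. Each level yields an operator controlled by Proposition~\ref{4.1}, with bound $\|b\|_{\mathrm{Osc}^{p,q}}$, and the normalisation of $\mathbb S$ plays the role of $\|\beta\|_{\mathrm{BMO}}\lesssim1$. Summing the $1+i\wedge j$ contributions in the $p$-convexified quasi-norm gives the factor $(1+i\wedge j)^{1/p}$. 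When $d>0$, the level-$m$ contributions come with an extra geometric factor $\delta^{cm}$, so this sum is bounded and the factor can be dropped.

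The remaining terms are the local pieces
\[
\sum_R\sum_{S\in\operatorname{ch}^{i\wedge j}(R)}\big[(b-\langle b\rangle_S)\chi_S,\ \text{part of }A_R\big].
\]
For these we need Proposition~\ref{square}, the weighted Schatten bound for a single shift of the form $\sum_R A_R$ with coefficient sequence $\lambda_R$. We verify it by writing each block $A_R$, multiplied by the localised oscillation, as a rank-at-most-$\#\operatorname{ch}^{i\wedge j}(R)$ sum of NWO tensors $\lambda_S e_S\otimes h_S$ indexed by $S\in\operatorname{ch}^{i\wedge j}(R)$. Here $\lambda_S=\|b-\langle b\rangle_S\|_{\text{\L}^r(S)}$, normalised as in the proof of Proposition~\ref{4.1}. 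We then apply Corollary~\ref{cor1} together with Lemma~\ref{bimaximal}. Because we index by the small cubes $S$, the NWO normalisations are correct, and each $S$ is used once.

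The loss $\delta^{-(i\wedge j)\Delta(1/p-1/2)_{++}}$ appears when passing from the $R$-indexed block to the $S$-indexed tensors. By the geometric dimension, $R$ contains $N\lesssim\delta^{-(i\wedge j)\Delta}$ such $S$. For $p\ge2$ we group the $N$ pieces at each $R$ and use $\ell^2$-orthogonality of the Haar functions: the block norm is then controlled by the $\ell^2$ sum of the pieces, which is dominated by their $\ell^p$ sum with no loss (up to $\varepsilon$ at $p=2$). For $p<2$ we apply H\"older from $\ell^2$ to $\ell^p$ over $N$ terms, which costs $N^{1/p-1/2}$. We then interpolate between these cases in the Lorentz scale via the real method to handle $q$.

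The main obstacle is the last step in the matrix-weighted setting. Orthogonality in $L^2(W)$ fails, so the grouping over $S\subseteq R$ cannot use orthogonality directly. I would try to perform it inside the NWO framework instead: define a single NWO pair for each $R$ by normalising the combined functions $e_R=N^{-1/2}\sum_S c_S e_S$. Then check, using Lemma~\ref{bimaximal} with $r$ large, that the $r$-norm bounds hold uniformly, so that Corollary~\ref{cor1} applies with the $R$-indexed sequence $\big(\sum_S\lambda_S^2\big)^{1/2}$. The $\ell^{p,q}$ bound of this sequence by $N^{(1/p-1/2)_+}\|\lambda\|_{\ell^{p,q}}$ is then purely combinatorial.
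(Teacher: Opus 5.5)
There is a genuine gap: your decomposition sends the two kinds of terms to the wrong estimates. You quote the commutator identity of \cite{Hyto}, but you describe its terms incorrectly.

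Take the coarse part of $b$, namely the $\mathbb D_Sb$ with $S$ strictly above the levels of $P$ and $Q$. It acts on $h_P$ and $h_Q$ as constants $c_P,c_Q$, so the commutator produces $\sum a_{PQR}(c_P-c_Q)\,h_P\otimes h_Q$. This is a dyadic \emph{shift} of complexity $(i,j)$ whose coefficients are $a_{PQR}$ times differences of averages of $b$. It is (part of) the term $\mathbb S_{\langle b\rangle}$, with $a'_{PQR}=a_{PQR}(\langle b\rangle_P-\langle b\rangle_Q)$, in \cite[Lemma~14.9]{Hyto}. It is not a paraproduct, and there is no BMO symbol $\beta$ for the normalisation of $\mathbb S$ to stand in for.

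This shift term is exactly the one that carries the rank loss. The paper bounds it by Proposition~\ref{square}, after showing $\|a'_{(R,k)}\|_{\ell^2_W}\lesssim(\mu(R)^{-1}\int_R|b-\langle b\rangle_R|^{2r'}d\mu)^{1/(2r')}$ via H\"older and reverse H\"older (Corollary~\ref{5.6}).

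Conversely, the fine-scale products $(b-\langle b\rangle_P)h_P=(\Pi_b+\sum_\alpha\Gamma_b^\alpha)h_P$ assemble into $[\Pi_b,\mathbb S]+\sum_\alpha[\Gamma_b^\alpha,\mathbb S]$, and these cost no rank loss. The ideal property suffices, together with $\|\Pi_b\|_{S^{p,q}(L^2(W))}\lesssim\|b\|_{\mathrm{Osc}^{p,q}}$ (the first estimate of Proposition~\ref{4.1}) and boundedness of $\mathbb S$ on $L^2(W)$.

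Your ``local pieces'', e.g.\ $(b-\langle b\rangle_S)h_S\otimes\sum_Q a_{SQR}h_Q$ when $i\le j$, are not shifts in the sense of Definition~\ref{lab-S}, so Proposition~\ref{square} does not apply to them. Nor are they NWO tensors indexed by $S$, because the second factor is spread over all of $R$.

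Your proposed remedy for the missing orthogonality in $L^2(W)$ also fails. A block $\sum_S\lambda_S e_S\otimes h_S$ has rank up to $N$ and cannot be rewritten as a single tensor $\lambda_Re_R\otimes h_R$. So Corollary~\ref{cor1}, which allows one rank-one term per cube, does not apply with the sequence $(\sum_S\lambda_S^2)^{1/2}$. Splitting into $N$ NWO sequences and summing costs a positive power of $N$ when $p>1$, so this route cannot give the loss-free bound for $p>2$.

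The paper proceeds differently:
\begin{enumerate}
\item It gets the almost-orthogonality $\|\mathbb S_a\|_{S^p(L^2(W))}^p\sim\sum_{(R,k)}\|A_{(R,k)}\|_{S^p(L^2(W))}^p$ from Treil's matrix-weighted square function estimate \cite{square2}.
\item It bounds each block by its Hilbert--Schmidt norm through the matrix kernel (Lemma~\ref{kernel}).
\item It uses the rank bound $\lesssim\delta^{-(i\wedge j)\Delta}$ only for $p<2$.
\item It interpolates in $q$.
\end{enumerate}

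Finally, your source for the factor $(1+i\wedge j)^{1/p}$ is not valid. Adding $1+i\wedge j$ operators in $S^{p,q}$ with $p>1$ only yields a factor of order $1+i\wedge j$, unless $p$-th powers add. That would require a disjointness you have not established. In the paper, this factor is the multiplicity $\#\mathcal K_{i,j}(R)\le 1+i\wedge j$ with which a single cube $R$ occurs in $\mathscr D^*_{i,j}$, fed into a repetition estimate for $\ell^{p,q}$. It disappears for $d>0$ because each cube then lies in a single generation, not because of the decay $\delta^{cm}$ you assert without justification.
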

The estimate for a single shift will be obtained by realizing its components as integral operators and passing to their kernels. The following identity, relating the Hilbert--Schmidt norm of an integral operator to the $L^2$ norm of its kernel, is classical; we present a proof for completeness.
\begin{lemma}\label{kernel}Let $T\in S^{2}(L^{2}(\mu,\mathbb C^N))$ be a Hilbert--Schmidt operator with $\mathbb M_N(\mathbb C)$ matrix kernel $k(\cdot,\cdot)$, that is,
	$$Tf=\int k(\cdot,y)f(y)\,d\mu(y).$$
 Then $$\|k(x,y)\|_{L^{2}\left( \mu\times \mu, S^2(\mathbb C^N)\right)}^2=\|T\|_{S^{2}\left(L^{2}(\mu,\mathbb C^N)\right)}^2.$$
	\end{lemma}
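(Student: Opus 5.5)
We reduce the statement to the scalar Hilbert--Schmidt identity by working with an orthonormal basis of $L^2(\mu,\mathbb C^N)$ adapted to the product structure $L^2(\mu)\otimes\mathbb C^N$. Fix an orthonormal basis $(\varphi_m)_m$ of the separable space $L^2(\mu)$ and the standard basis $(\epsilon_a)_{a=1}^N$ of $\mathbb C^N$. Then $\{\varphi_m\epsilon_a\}_{m,a}$ is an orthonormal basis of $L^2(\mu,\mathbb C^N)$, and, writing $k=(k_{ab})_{a,b=1}^N$, the product functions $\Phi_{mn}(x,y):=\varphi_m(x)\overline{\varphi_n(y)}$ form an orthonormal basis of $L^2(\mu\times\mu)$. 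This is the standard tensor-product fact, which uses only separability.

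Next we compute the Hilbert--Schmidt norm through this basis. By definition and \eqref{s2-s1},
\[
\|T\|_{S^2}^2=\sum_{n,b}\|T(\varphi_n\epsilon_b)\|_{L^2(\mu,\mathbb C^N)}^2=\sum_{n,b}\sum_{m,a}\big|\langle T(\varphi_n\epsilon_b),\varphi_m\epsilon_a\rangle\big|^2 .
\]
Unwinding the kernel representation gives
\[
\langle T(\varphi_n\epsilon_b),\varphi_m\epsilon_a\rangle=\iint k_{ab}(x,y)\varphi_n(y)\overline{\varphi_m(x)}\,d\mu(y)\,d\mu(x)=\langle k_{ab},\Phi_{mn}\rangle_{L^2(\mu\times\mu)}
\]
(up to conjugation convention). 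Summing over $m,n$ for fixed $a,b$, Parseval in $L^2(\mu\times\mu)$ yields $\|k_{ab}\|_{L^2(\mu\times\mu)}^2$. Summing over $a,b$ then gives $\iint\sum_{a,b}|k_{ab}(x,y)|^2\,d\mu\,d\mu=\iint\|k(x,y)\|_{S^2(\mathbb C^N)}^2\,d\mu\,d\mu$, which is the claimed identity.

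The only delicate point is justifying the first Fubini interchange. Before we know that $k_{ab}\in L^2(\mu\times\mu)$, the double integral defining $\langle k_{ab},\Phi_{mn}\rangle$ needs absolute integrability. We handle this by reading the hypothesis in the usual sense: $T$ is given by a kernel with $\int k(\cdot,y)f(y)\,d\mu(y)$ absolutely convergent for $f$ in a dense class. We then choose the $\varphi_n$ in the span of indicators of finite-measure sets, for example by Gram--Schmidt applied to a countable dense family. Alternatively, one can observe that the map $k\mapsto T_k$ sends $L^2(\mu\times\mu;S^2(\mathbb C^N))$ isometrically into $S^2$ by the computation above (valid for $k$ in $L^2$) and onto $S^2$, since rank-one operators $f\otimes g$ have kernels $f(x)\overline{g(y)}^{T}$. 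In that case uniqueness of the kernel identifies $k$ with the $L^2$ kernel of $T$. We expect this identification of the kernel to be the main, though routine, obstacle.
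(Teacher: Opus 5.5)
Your argument is correct, but it is organised differently from the paper's. The paper starts from the Schmidt decomposition $T=\sum_n\sigma_n e_n\otimes f_n$ and writes $k(x,y)=\sum_n\sigma_n e_n(x)f_n^*(y)$. It then expands $\|k(x,y)\|_{S^2(\mathbb C^N)}^2$ as a double sum over $n,m$ and integrates, so that orthonormality of $(e_n)$ and $(f_n)$ removes the off-diagonal terms and leaves $\sum_n\sigma_n^2$. In effect it uses that the rank-one kernels $e_n(x)f_n(y)^*$ are orthonormal in $L^2(\mu\times\mu;S^2(\mathbb C^N))$, which is short and coordinate-free. You instead compute $\|T\|_{S^2}^2$ in a fixed product basis $\varphi_m\epsilon_a$. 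You recognise the matrix entries of $T$ as the coefficients of $k_{ab}$ in the basis $\varphi_m(x)\overline{\varphi_n(y)}$ of $L^2(\mu\times\mu)$, and reduce to the scalar Parseval identity entry by entry. This needs no Schmidt decomposition, only the tensor-product basis of $L^2(\mu\times\mu)$; that fact rests on $\sigma$-finiteness of $\mu$, which is automatic in a space of homogeneous type, rather than on separability. What your write-up buys is that it isolates the one non-formal step common to both proofs: that the given $k$ is the $L^2$ kernel of $T$. The paper assumes this silently when it equates $k$ with $\sum_n\sigma_n e_nf_n^*$ and interchanges sum and integral. Your first fix does not by itself close this point. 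Fubini makes the pairing of $k_{ab}$ with $\Phi_{mn}$ meaningful, but Parseval still needs $k_{ab}\in L^2(\mu\times\mu)$, or an a.e.\ identification of $k_{ab}$ with $\sum_{m,n}c_{mn}\Phi_{mn}$. It is your second argument that does the job: $k\mapsto T_k$ is an isometry on $L^2$ kernels, it is surjective because its range is closed and contains the dense finite-rank operators, and uniqueness of kernels then identifies $k$. For the kernels actually fed into this lemma later, finite sums of $a_{PQR}W^{\frac12}(x)h_P(x)h_Q(y)W^{-\frac12}(y)$, membership in $L^2(\mu\times\mu;S^2(\mathbb C^N))$ follows at once from local integrability of $W$ and $W^{-1}$. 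So in the paper's application either proof applies directly.
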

\begin{proof}
	Let
	$$T=\sum_{n}\sigma_n e_n\otimes f_n$$
	 be the singular value decomposition of $T$, where $e_n(x),f_n(y)\in \mathbb C^N$.
	 Then $$k(x,y)=\sum_{n}\sigma_ne_n(x){f_n^*(y)},$$
	 where $^*$ denotes conjugate transpose.
	 Consequently,
	 \begin{align*}
	 \|k(x,y)\|_{S^2(\mathbb C^N)}^2&=\sum_{i,j}|\sum_{n}\sigma_ne_{n,i}(x){f_{n,j}}^*(y)|^2\\
	 &=\sum_{i,j}\sum_{n,m}\sigma_n\sigma_me_{m,i}(x){f_{m,j}}^*(y)\cdot{f_{n,j}}(y){e_{n,i}}^* (x)
	 ,
	 \end{align*}
 and therefore, by the orthonormality of $(e_n)$ and $(f_n)$,
 \begin{align*}
 	\|k(x,y)\|_{L^{2}\left(\mu\times \mu, S^2(\mathbb C^N)\right)}^2&=\sum_{m,n}\sigma_n\sigma_m\int e_m(x)\cdot{e_n^*(x)}dx\int {f_m(y)}^*\cdot f_n(y)dy\\&=\sum_{n}\sigma^2_n=\|T\|_{S^{2}\left(L^{2}(\mu,\mathbb C^N)\right)}^2.
 \end{align*}
\end{proof}
By Lemma~\ref{kernel}, the norm $S^{2}(L^{2}(\mu,\mathbb C^N))$  of an integral operator equals the norm $L^{2}\left(\mu\times \mu, S^2(\mathbb C^N)\right)$ of its kernel. This yields the following estimate for the components of a dyadic shift.
\begin{lemma}	For a matrix weight $W$, a component $A_{(R, k)}$ of a dyadic shift satisfies
		\begin{align*}
			\left\|A_{(R, k)}\right\|_{S^{2}\left(L^{2}(\mu,\mathbb C^N)\right)} &\lesssim\Big(\sum_{\substack{P \in \operatorname{ch}^{i}(R, k) \\ Q \in \operatorname{ch}^{j}(R, k)}}|a_{P Q R}|^2\frac1{\mu(P)}\int_P\tau(W^{\frac12}(x) \langle W^{-1}\rangle_QW^{\frac12}(x))d\mu(x) \Big)^{\frac{1}{2}}\\
			&=:\left\|a_{(R, k)}\right\|_{\ell^{2}_W} .
	\end{align*}
	\end{lemma}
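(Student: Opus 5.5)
The plan is to read the left-hand side as the Hilbert--Schmidt norm of the weighted conjugate $W^{\frac12}A_{(R,k)}W^{-\frac12}$ acting on $L^2(\mu,\mathbb C^N)$. This is the quantity that enters $\|A_{(R,k)}\|_{S^2(L^2(W))}$, and it is the only reading under which the right-hand side, which involves $W$, makes sense. I will then compute this norm exactly through its kernel using Lemma~\ref{kernel}. Since $A_{(R,k)}=\sum_{P,Q}a_{PQR}h_P\otimes h_Q$ acts componentwise on vector-valued functions, the conjugated operator is an integral operator with the $\mathbb M_N(\mathbb C)$-valued kernel
\[
k(x,y)=c(x,y)\,W^{\frac12}(x)W^{-\frac12}(y),\qquad c(x,y):=\sum_{\substack{P \in \operatorname{ch}^{i}(R, k) \\ Q \in \operatorname{ch}^{j}(R, k)}}a_{PQR}\,h_P(x)\overline{h_Q(y)} .
\]
Here $c$ is scalar-valued. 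It follows that
\[
\|k(x,y)\|_{S^2(\mathbb C^N)}^2=|c(x,y)|^2\,\tau\big(W^{\frac12}(x)W^{-1}(y)W^{\frac12}(x)\big),
\]
because $\tau(MM^*)$ with $M=W^{\frac12}(x)W^{-\frac12}(y)$ equals $\tau(W^{\frac12}(x)W^{-1}(y)W^{\frac12}(x))$, using the self-adjointness of $W^{\pm\frac12}$.

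The next step uses the fact that the cubes in $\operatorname{ch}^{i}(R,k)$ are pairwise disjoint, as are those in $\operatorname{ch}^{j}(R,k)$. Consequently, for each pair $(x,y)$ at most one term in the sum defining $c(x,y)$ is nonzero, namely the one with $x\in P$ and $y\in Q$. Hence
\[
|c(x,y)|^2=\sum_{P,Q}|a_{PQR}|^2|h_P(x)|^2|h_Q(y)|^2,
\]
with no cross terms. Integrating and applying Lemma~\ref{kernel} then gives
\[
\big\|W^{\frac12}A_{(R,k)}W^{-\frac12}\big\|_{S^2}^2=\sum_{P,Q}|a_{PQR}|^2\int_P\int_Q|h_P(x)|^2|h_Q(y)|^2\,\tau\big(W^{\frac12}(x)W^{-1}(y)W^{\frac12}(x)\big)\,d\mu(y)\,d\mu(x).
\]

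To finish, I insert the standard sup bound for cancellative Haar functions on spaces of homogeneous type, $\|h_P\|_\infty\lesssim\mu(P)^{-\frac12}$. This bound holds because the children of a dyadic cube have measure comparable to that of the parent, by the doubling property and the ball inclusions in Definition~\ref{dyadiccube}. With it, $|h_P(x)|^2|h_Q(y)|^2\lesssim \mu(P)^{-1}\mu(Q)^{-1}$. For the $y$-integral, trace and multiplication by the fixed matrices $W^{\frac12}(x)$ are linear, so
\[
\frac{1}{\mu(Q)}\int_Q\tau\big(W^{\frac12}(x)W^{-1}(y)W^{\frac12}(x)\big)\,d\mu(y)=\tau\big(W^{\frac12}(x)\langle W^{-1}\rangle_QW^{\frac12}(x)\big).
\]
Substituting this yields exactly $\|a_{(R,k)}\|_{\ell^2_W}^2$, and taking square roots completes the argument.

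The only genuinely non-formal input is the Haar sup bound, which I expect to be the mild obstacle. It requires recalling the construction of $h_P$ in the homogeneous setting, as in \cite{Hyto}, and the comparability $\mu(P')\sim\mu(P)$ for $P'\in\operatorname{ch}(P)$.
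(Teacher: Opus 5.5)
Your proposal is correct and follows the paper's proof essentially step by step. Both arguments conjugate by $W^{\pm\frac12}$, compute the Hilbert--Schmidt norm through the matrix kernel via Lemma~\ref{kernel}, use the disjointness of the cubes in $\operatorname{ch}^{i}(R,k)$ and in $\operatorname{ch}^{j}(R,k)$ to eliminate cross terms, insert $|h_P|\lesssim\mu(P)^{-\frac12}\mathbf 1_P$, and average $W^{-1}$ over $Q$ by linearity of the trace. Your reading of the left-hand side as $\|W^{\frac12}A_{(R,k)}W^{-\frac12}\|_{S^2(L^2(\mu,\mathbb C^N))}=\|A_{(R,k)}\|_{S^2(L^2(W))}$ is also exactly the quantity the paper's proof bounds.
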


\begin{proof} By definition of the weighted Schatten norm,
$$
\|T\|_{S^{2}\left(L^2(W)\right)}=\left\|W^{\frac{1}{2}} T W^{-\frac{1}{2}}\right\|_{S^{2}\left(L^2(\mu,\mathbb C^N)\right)}.
$$
 Hence, by Lemma~\ref{kernel},
$$
\begin{aligned}
	\left\|A_{(R, k)}\right\|_{S^{2}\left(L^{2}(W)\right)}^{2} & =\left\|W^{\frac{1}{2}} A_{(R, k)} W^{-\frac{1}{2}}\right\|_{S^{2}\left(L^{2}(\mu,\mathbb C^N)\right)}^{2} \\
	& =\iint_{R \times R}\Big\|\sum_{\substack{P \in \operatorname{ch}^{i}(R, k) \\
			Q \in \operatorname{ch}^{j}(R, k)}} a_{P Q R} W^{\frac{1}{2}} h_{P}(x) W^{-\frac{1}{2}} h_{Q}(y)\Big\|_{S^2}^{2} \mathrm{~d} \mu(x) \mathrm{~d} \mu(y).
\end{aligned}
$$
Since the cubes $P \in \operatorname{ch}^{i}(R, k)$ are pairwise disjoint, and likewise the cubes $Q$, combining this with the identity \eqref{s2-s1}, we obtain
\begin{align*}
&\Big\|\sum_{\substack{P \in \operatorname{ch}^{i}(R, k) \\
			Q \in \operatorname{ch}^{j}(R, k)}} a_{P Q R} W^{\frac{1}{2}} h_{P}(x) W^{-\frac{1}{2}} h_{Q}(y)\Big\|_{S^2(\mathbb C^N)}^2 \\
			&=\sum_{\substack{P \in \operatorname{ch}^{i}(R, k) \\
			Q \in \operatorname{ch}^{j}(R, k)}}\Big\| a_{P Q R} W^{\frac{1}{2}} h_{P}(x) W^{-\frac{1}{2}} h_{Q}(y)\Big\|_{S^2(\mathbb C^N)}^2
	\\
	&=\sum_{\substack{P \in \operatorname{ch}^{i}(R, k) \\
			Q \in \operatorname{ch}^{j}(R, k)}} |a_{P Q R}|^2 |h_{P}(x)h_{Q}(y) |^2
		\|W^{\frac12}(x) W^{-\frac12}(y)	\|_{S^2(\mathbb C^N)}^2 .
	\end{align*}
Since $\left|h_{P}\right| \lesssim \mu(P)^{-\frac{1}{2}} 1_{P}$, and similarly for the cubes $Q$, it follows that
\begin{align*}&\left\|A_{(R, k)}\right\|_{S^{2}\left(L^{2}(W)\right)}^{2} 
\\&=\sum_{\substack{P \in \operatorname{ch}^{i}(R, k) \\			Q \in \operatorname{ch}^{j}(R, k)}} |a_{P Q R}|^2\iint_{R \times R} h_{P}^2(x)\tau\left(		W^{\frac12}(x) W^{-1}(y)	W^{\frac12}(x) \right) h_{Q}^2(y) \mathrm{~d} \mu \mathrm{~d} \mu	\\&\lesssim\sum_{\substack{P \in \operatorname{ch}^{i}(R, k) \\
			Q \in \operatorname{ch}^{j}(R, k)}} |a_{P Q R}|^2 \iint_{R \times R} \frac{1_{P}(x)}{\mu(P)} \tau\left(
		W^{\frac12}(x) W^{-1}(y)	W^{\frac12}(x) \right) \frac{1_{Q}(y)}{\mu(Q)} \mathrm{~d} \mu \mathrm{~d} \mu \\
		& =\sum_{\substack{P \in \operatorname{ch}^{i}(R, k) \\
			Q \in \operatorname{ch}^{j}(R, k)}} |a_{P Q R}|^2\frac1{\mu(P)}\int_P\tau(W^{\frac12}(x) \langle W^{-1}\rangle_QW^{\frac12}(x))d\mu(x) .
\end{align*}
\end{proof}
For fixed complexity parameters $(i,j)$, we view the shift $\mathbb S=\mathbb S_a$ as a linear function of the parameter vector
\[
 a=\{a_{PQR}: R\in\mathscr D,\ k\in\mathcal K_{i,j}(R),\
 P\in\operatorname{ch}^i(R,k),\ Q\in\operatorname{ch}^j(R,k)\},
\]
where $\mathcal K_{i,j}(R)\subset\mathbb Z$ is a set of at most $1+\min\{i,j\}$ generations $k$ at which the cube $R$ can contribute a non-zero coefficient block $a_{(R,k)}$ (see \cite[Lemma~14.6]{Hyto}), that is,
\[
 \mathcal K_{i,j}(R)
 :=[k_{\min}(R)\vee(k_{\max}(R)-i\wedge j),\ k_{\max}(R)]\cap\mathbb Z,
\]
where
\[
\begin{aligned}
 k_{\min}(R)&:=\inf\{k\in\mathbb Z:R\in\mathscr D_k\},\\
 k_{\max}(R)&:=\sup\{k\in\mathbb Z:R\in\mathscr D_k\}.
\end{aligned}
\]

We also set
\[
 \mathscr D^*_{i,j}:=\{(R,k): R\in\mathscr D,\ k\in\mathcal K_{i,j}(R)\}.
\]
Adjusting the argument in \cite[Proposition~15.4]{Hyto} to the matrix-weighted setting, we obtain the following bound for a single shift, which combines a square-function estimate with the rank count governed by the geometric dimension $\Delta$. We also take the opportunity to fix a detail in \cite[Proposition 14.15 and 15.4]{Hyto}, where the claims concerning $p=2$ may have been slightly overoptimistic. 
\begin{proposition}\label{square}
Let $(X,\rho,\mu)$ be a space of homogeneous type of geometric dimension
$\Delta$. Let $\mathbb S_a$ be a dyadic shift of complexity $(i,j)$ on
$L^2(\mu)$. For all $p\in(0,\infty)$, $q\in(0,\infty]$, and $W\in A_2$,
$$
 \|\mathbb S_a\|_{S^{p,q}(L^2(W))}
 \lesssim
 \left(C\delta^{-(i\wedge j)\Delta}\right)^{(\frac1p-\frac12)_{++}}
 \|a\|_{\ell^{p,q}(\ell^2_W)},
$$
where
$$
 \|a\|_{\ell^{p,q}(\ell^2_W)}
 :=
 \Big\|\{\|a_{(R,k)}\|_{\ell^2_W}\}_{(R,k)\in\mathscr D^*_{i,j}}\Big\|_{\ell^{p,q}}.
$$

\end{proposition}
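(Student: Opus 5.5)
We prove Proposition~\ref{square} by interpolating between two endpoint estimates for the decomposition $\mathbb S_a=\sum_{(R,k)\in\mathscr D^*_{i,j}}A_{(R,k)}$, following the scheme of \cite[Proposition~15.4]{Hyto}. Throughout we work with the conjugated operators $W^{\frac12}A_{(R,k)}W^{-\frac12}$ on $L^2(\mu,\mathbb C^N)$. Each such operator has $S^2$ norm at most $\|a_{(R,k)}\|_{\ell^2_W}$ by the preceding lemma. It also has rank at most $N\cdot\min\{\#\operatorname{ch}^i(R,k),\#\operatorname{ch}^j(R,k)\}\lesssim C\delta^{-(i\wedge j)\Delta}$. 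This rank bound holds because its range lies in $W^{\frac12}\operatorname{span}\{h_P\}\otimes\mathbb C^N$ (and dually for $h_Q$), and because the number of descendants of $R$ at depth $m$ is $\lesssim \delta^{-m\Delta}$ by the geometric dimension. The factor $N$ is a fixed constant.

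\emph{Case $p\ge2$ (more precisely $p>2$).} Here the exponent $(\frac1p-\frac12)_{++}$ vanishes, so we aim for $\|\mathbb S_a\|_{S^{p,q}}\lesssim\|a\|_{\ell^{p,q}(\ell^2_W)}$ with no rank loss. We use the NWO machinery. For each $(R,k)$ we write $W^{\frac12}A_{(R,k)}W^{-\frac12}$ via its own Schmidt decomposition, or more concretely as $\|a_{(R,k)}\|_{\ell^2_W}$ times a normalised block. We then argue as in Corollary~\ref{cor1}, but with the rank-one tensors $e_Q\otimes h_Q$ replaced by these normalised blocks. These blocks are supported in $R\times R$ and have Hilbert--Schmidt norm one, hence operator norm at most one.

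We obtain the $S^\infty$-type estimate $\|\sum\lambda_{(R,k)}B_{(R,k)}\|_{S^\infty}\lesssim\|\lambda\|_\infty$ from the matrix Carleson bound of Proposition~\ref{T}, via a bilinear maximal function adapted to blocks. This uses Lemma~\ref{bimaximal} with the reverse H\"older inequality \eqref{reverseholder}, applied to the kernels $W^{\frac12}(x)h_P(x)$ and $W^{-\frac12}(y)h_Q(y)$. We obtain the $S^2$-estimate $\|\sum\lambda B\|_{S^2}\lesssim\|\lambda\|_{\ell^2}$ from near-orthogonality of the blocks. Real interpolation between these two gives the $\ell^{p,q}$ bound for $2<p<\infty$.

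\emph{Case $p<2$.} We use the $p$-triangle inequality in $S^{p,q}$, or for $p\le1$ the quasi-Banach version via an Aoki--Rolewicz renorming with exponent $\min\{p,q,1\}$ and a standard $\ell^{p,q}$ summation argument. The key step is the finite-rank estimate
\[
\|A\|_{S^{p,q}}\le (\operatorname{rank}A)^{\frac1p-\frac12}\|A\|_{S^2},
\]
which we apply to each block. This yields blocks with norms $\lesssim (C\delta^{-(i\wedge j)\Delta})^{\frac1p-\frac12}\|a_{(R,k)}\|_{\ell^2_W}$. We then need to sum these in $\ell^{p,q}$ rather than merely in $\ell^p$. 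To do so we decompose the index set into level sets of $\|a_{(R,k)}\|_{\ell^2_W}$, i.e.\ dyadic layers $2^{-m}$, apply the above on each layer, and reassemble. We expect this reassembly to cost only constants, provided we interpolate between $p_0<p<p_1<2$.

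\emph{Endpoint $p=2$ and main obstacle.} For $p=2$ and $q\neq2$ neither argument applies directly. We therefore interpolate between $p_0=2-\epsilon$ and $p_1=2+\epsilon$, which produces the arbitrary loss $\varepsilon>0$ built into the definition of $(\cdot)_{++}$; this is the correction to \cite{Hyto} mentioned before the statement.

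The main obstacle is the $p>2$ case in the matrix setting. There we must verify that the blocks $W^{\frac12}A_{(R,k)}W^{-\frac12}$ behave like a matrix-weighted NWO family, with normalisation in $\ell^2_W$ rather than by $|a_{PQR}|$. Concretely, we must show that the associated bilinear maximal function is bounded $L^2\times L^2\to L^1$. We plan to do this by expanding each block into its at most $C\delta^{-(i\wedge j)\Delta}$ Haar pieces, grouping them by subcube, and using the $A_2$ insertion trick from the proof of Lemma~\ref{bimaximal} with $\langle W\rangle_Q^{\pm\frac12}$ and $\langle W^{-1}\rangle_Q^{\pm\frac12}$. We then check that the resulting constant is uniform in $(i,j)$, since the $\ell^2_W$ norm absorbs the weight exactly through $\tau(W^{\frac12}\langle W^{-1}\rangle_QW^{\frac12})$.
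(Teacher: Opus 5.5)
Your per-block ingredients match the paper: the Hilbert--Schmidt bound by $\|a_{(R,k)}\|_{\ell^2_W}$, the rank bound $\lesssim\delta^{-(i\wedge j)\Delta}$, real interpolation, and the $\varepsilon$-loss at $p=2$. What is missing is the step that glues the blocks together. The paper's proof rests on the almost-orthogonality identity $\|\mathbb S_a\|_{S^p(L^2(W))}^p\sim\sum_{(R,k)}\|A_{(R,k)}\|_{S^p(L^2(W))}^p$, valid for every $p\in(0,\infty)$, which comes from the matrix-weighted square function estimate \cite{square2}. None of your three substitutes for it works.

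\emph{First}, for $p>2$ the $S^\infty$ endpoint cannot come from Proposition~\ref{T}. That bound is in terms of $\|\operatorname{Car}\lambda\|_\infty$, and bounded sequences are not Carleson: for $\lambda\equiv1$, every generation below $P$ contributes $\mu(P)$ to $\sum_{Q\subset P}\mu(Q)$, so $\operatorname{Car}\lambda\equiv\infty$. The operator $M_{\mathcal E,W}$ is positive and cannot see the cancellation of Haar functions across scales, which is the only reason $\sup|\lambda_{(R,k)}|$ suffices here. Moreover, splitting a block into pieces $h_P\otimes\sum_Q a_{PQR}h_Q$ pairs a function at scale $\ell(P)$ with one at scale $\ell(R)=\delta^{-i}\ell(P)$. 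The NWO support condition then holds only with enlargement $\sim\delta^{-i}$, so nothing is uniform in $(i,j)$.

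\emph{Second}, the $S^2$ endpoint via ``near-orthogonality'' is exactly the nontrivial point. By Lemma~\ref{kernel}, the Hilbert--Schmidt pairing of two conjugated blocks with scalar kernels $K,K'$ is $\iint\overline{K(x,y)}K'(x,y)\,\tau(W(x)W^{-1}(y))\,d\mu(x)\,d\mu(y)$. This does not vanish for blocks with nested supports, because $\tau(W(x)W^{-1}(y))$ is not constant.

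\emph{Third}, for $1<p<2$ the $p$-triangle inequality $\|\sum_k A_k\|_{S^p}^p\le\sum_k\|A_k\|_{S^p}^p$ is false: $n$ copies of one operator give $n^p$ against $n$. It holds only for $p\le1$, so your interpolation between $p_0<p<p_1<2$ is unavailable once $p_1>1$.

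To close the gap, use the two-sided square function estimate $\|f\|_{L^2(W)}^2\sim\sum_P\|\langle W\rangle_P^{1/2}\langle f,h_P\rangle\|^2$. Conjugating by $f\mapsto(\langle W\rangle_P^{1/2}\langle f,h_P\rangle)_P$ makes $\mathbb S_a$ similar, with constants depending only on $[W]_{A_2}$, to a block-diagonal operator on $\ell^2(\mathbb C^N)$. Its blocks $[\langle W\rangle_P^{1/2}a_{PQR}\langle W\rangle_Q^{-1/2}]_{P,Q}$ have Hilbert--Schmidt norm at most $\|a_{(R,k)}\|_{\ell^2_W}$, since $\langle W\rangle_Q^{-1}\le\langle W^{-1}\rangle_Q$, and they satisfy the same rank bound. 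Block-diagonality gives the $\ell^p$-sum identity for all $p$. After that, your per-block estimates and interpolation finish the proof, and no maximal-function argument is needed.

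Your $p$-triangle argument is a valid alternative for $p\le1$. Interpolating it against a $p_1=2$ endpoint would give exactly the exponent $\frac1p-\frac12$ for $1<p<2$, but that endpoint again requires the square function.
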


\begin{proof}

	The following identity, taken from \cite{Hyto}, rests on a square-function estimate of \cite{square2} that also holds for matrix $A_2$ weights. 
By an argument analogous to \cite[Proposition 14.15]{Hyto},
$$
\left\|\mathbb{S}_{a}\right\|_{S^{p}(L^2(W))}^{p}\sim
\sum_{(R, k) \in \mathscr{D}_{i, j}^{*}}\left\|A_{(R, k)}\right\|_{S^{p}(L^2(W))}^{p} .
$$
When $p \geq 2$, we dominate each component by its Hilbert--Schmidt norm:
$$
\left\|A_{(R, k)}\right\|_{S^{p}(L^2(W))} \leq\left\|A_{(R, k)}\right\|_{S^{2}(L^2(W))}\lesssim\left\|a_{(R, k)}\right\|_{\ell^{2}_W}, \quad p \geq 2.
$$
When $p<2$, by \cite[Lemma 20.14]{Hyto} the operator $A_{(R, k)}$ has rank at most $C \delta^{-(i \wedge j) \Delta}$, whence
$$
\begin{aligned}
	\left\|A_{(R, k)}\right\|_{S^{p}(L^2(W))} 
	& \leq \left\|W^{\frac12}A_{(R, k)}W^{-\frac12}\right\|_{S^{2}}\left(\operatorname{rank}W^{\frac12}A_{(R, k)}W^{-\frac12}\right)^{\frac{1}{p}-\frac{1}{2}}\\
		& \lesssim_d \left\|W^{\frac12}A_{(R, k)}W^{-\frac12}\right\|_{S^{2}}\left(\operatorname{rank}A_{(R, k)}\right)^{\frac{1}{p}-\frac{1}{2}}\\
	& \lesssim_d\left\|a_{(R, k)}\right\|_{\ell^{2}_W}\left(C \delta^{-(i \wedge j) \Delta}\right)^{(\frac{1}{p}-\frac{1}{2})_+}, \quad p<2.
\end{aligned}
$$
Combining the two cases gives
$$
\left\|A_{(R, k)}\right\|_{S^{p}(L^2(W))} \lesssim_d\left\|a_{(R, k)}\right\|_{\ell^{2}_W}\left(C \delta^{-(i \wedge j) \Delta}\right)^{\left(\frac{1}{p}-\frac{1}{2}\right)_{+}},
$$
and raising to the $p$-th power and summing over $(R, k) \in \mathscr{D}_{i, j}^{*}$ yields
\begin{equation*}
	\left\|\mathbb{S}_{a}\right\|_{S^{p}(L^2(W))} \lesssim_d\|a\|_{\ell^{p}\left(\ell^{2}_W\right)}\left(C \delta^{-(i \wedge j) \Delta}\right)^{\left(\frac{1}{p}-\frac{1}{2}\right)_{+}}=: C_{p}\|a\|_{\ell^{p}\left(\ell^{2}_W\right)} .
\end{equation*}
The general case $q\in(0,\infty]$ now follows by interpolation.
Choose
\[
0<p_0<p<p_1<\infty,\qquad
\frac1p=\frac{1-\theta}{p_0}+\frac{\theta}{p_1},
\]
and let $(\cdot,\cdot)_{\theta,q}$ denote the real interpolation functor.
By
\[
S^{p,q}=(S^{p_0},S^{p_1})_{\theta,q}
\qquad\text{\cite[Th\'eor\`eme~1]{merucci}},
\]
and
\[
\ell^{p,q}(\ell_W^2)
=
(\ell^{p_0}(\ell_W^2),\ell^{p_1}(\ell_W^2))_{\theta,q}
\qquad\text{\cite[Proposition~9]{kree1967}},
\]
together with the exactness of real interpolation
\cite[Theorem~3.11.2]{BL1976},
we obtain
\[
\|\mathbb S_a\|_{S^{p,q}}
\le
C_{p_0}^{\,1-\theta}
C_{p_1}^{\,\theta}
\|a\|_{\ell^{p,q}(\ell_W^2)}.
\]

If $p>2$, choose $2<p_0<p<p_1$, so that
$C_{p_0}=C_{p_1}=C$, giving
\[
\|\mathbb S_a\|_{S^{p,q}}
\lesssim
\|a\|_{\ell^{p,q}(\ell_W^2)}.
\]

If $0<p<2$, choose $p_0,p_1<2$. Since
\[
\left(\frac1{p_0}-\frac12\right)(1-\theta)
+
\left(\frac1{p_1}-\frac12\right)\theta
=
\frac1p-\frac12,
\]
we conclude
\[
\|\mathbb S_a\|_{S^{p,q}}
\lesssim
\bigl(C\delta^{-(i\wedge j)\Delta}\bigr)^{\frac1p-\frac12}
\|a\|_{\ell^{p,q}(\ell_W^2)}.
\]

Finally, when $p=2$, interpolate between some
$p_0<2$ and $p_1>2$. Writing
$1/p_0=1/2+\varepsilon$,
the previous estimate yields
\begin{align}\label{eq:eps-power}
\|\mathbb S_a\|_{S^{2,q}}
\lesssim
\delta^{-(i\wedge j)\Delta\varepsilon}
\|a\|_{\ell^{2,q}(\ell_W^2)},
\end{align}where $\varepsilon$ can be taken as small as we like.
This completes the proof.

\end{proof}

\begin{remark}  At the end of the proof of \cite[Proposition 14.15]{Hyto}, the limit $\varepsilon\to 0$ was taken. This may not be allowed, since the implicit constant in \eqref{eq:eps-power} may depend on $\varepsilon$ and may blow up as $\varepsilon \to 0$.
\end{remark}
 
Let $\mathbb{S}_{\langle b\rangle}$ be the new shift associated with the commutator $[b, \mathbb{S}]$ of $\mathbb{S}$ with $b\in L_{\mathrm{loc}}^{1}(\mu)$ (see \cite[Lemma 14.9]{Hyto});
 its components take the form
\begin{equation}\label{apqr}
	a_{P Q R}^{\prime}:=a_{P Q R}\left(\langle b\rangle_{P}-\langle b\rangle_{Q}\right), \quad\left|a_{P Q R}\right| \lesssim \frac{\sqrt{\mu(P) \mu(Q)}}{\mu(R)} . 
\end{equation}
The following result is the matrix-weighted analogue of \cite[Corollary 14.21]{Hyto}.
\begin{corollary}\label{5.6}
	Let $(X, \rho, \mu)$ be a space of homogeneous type with geometric dimension $\Delta$, and let $W \in A_{2}$. Let $\mathbb{S}$ be a normalised dyadic shift on $L^{2}(\mu)$, and let $\mathbb{S}_{\langle b\rangle}$ be the shift associated with coefficients $a'_{PQR}$ as in \eqref{apqr}. Then for all $p \in(0, \infty)$ and $q \in(0, \infty]$,
	$$
	\left\|\mathbb{S}_{\langle b\rangle}\right\|_{S^{p, q}\left(L^{2}(W)\right)} \lesssim\left(C \delta^{-(i \wedge j) \Delta}\right)^{\left(\frac{1}{p}-\frac{1}{2}\right)_{++}}(1+i \wedge j)^{\frac{1}{p}}\|b\|_{\mathrm{Osc}^{p, q}(\mathscr{D})}.
	$$
		The factor $(1+i \wedge j)^{\frac{1}{p}}$ can be omitted if $(X, \rho, \mu)$ has positive lower dimension.
\end{corollary}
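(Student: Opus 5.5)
The plan is to apply Proposition~\ref{square} to the shift $\mathbb S_{\langle b\rangle}$, whose coefficients are $a'_{PQR}$. This reduces the corollary to the sequence estimate
\[
\|a'\|_{\ell^{p,q}(\ell^2_W)}\lesssim (1+i\wedge j)^{\frac1p}\|b\|_{\mathrm{Osc}^{p,q}(\mathscr D)}.
\]
The prefactor $(C\delta^{-(i\wedge j)\Delta})^{(\frac1p-\frac12)_{++}}$ then comes directly from Proposition~\ref{square}.

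\textbf{Pointwise bound on each block.} Fix $(R,k)\in\mathscr D^*_{i,j}$. By the definition of $\ell^2_W$,
\[
\|a'_{(R,k)}\|_{\ell^2_W}^2=\sum_{P,Q}|a_{PQR}|^2|\langle b\rangle_P-\langle b\rangle_Q|^2\frac1{\mu(P)}\int_P\tau\big(W^{\frac12}\langle W^{-1}\rangle_QW^{\frac12}\big)\,d\mu .
\]
The weight factor is handled in three steps.
\begin{enumerate}
\item Since $Q\subseteq R$ and $\mu(Q)\gtrsim \delta^{jD}\mu(R)$ is \emph{not} uniform in $j$, I avoid using that bound. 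Instead I use $\langle W^{-1}\rangle_Q\le \frac{\mu(R)}{\mu(Q)}\langle W^{-1}\rangle_R$.
\item The trace then becomes $\frac{\mu(R)}{\mu(Q)}\tau(W^{\frac12}\langle W^{-1}\rangle_RW^{\frac12})$.
\item Integrating over $P$ gives at most $\frac{\mu(R)}{\mu(Q)\mu(P)}\int_P\|\langle W^{-1}\rangle_R^{\frac12}W^{\frac12}\|^2\,d\mu$.
\end{enumerate}
Combining this with normalisation $|a_{PQR}|^2\lesssim \mu(P)\mu(Q)/\mu(R)^2$ and summing over $Q$ (disjoint, so $\sum_Q 1$ is bounded by the number of children of $R$, which I also want to avoid), I get a cleaner bound. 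Using $|\langle b\rangle_P-\langle b\rangle_Q|\le |\langle b\rangle_P-\langle b\rangle_R|+|\langle b\rangle_Q-\langle b\rangle_R|$ and $\mu(Q)\le\mu(R)$, the block is dominated by a weighted average over $R$:
\[
\|a'_{(R,k)}\|_{\ell^2_W}^2\lesssim \frac1{\mu(R)}\sum_P\int_P\|\langle W^{-1}\rangle_R^{\frac12}W^{\frac12}\|^2\,d\mu\,|\langle b\rangle_P-\langle b\rangle_R|^2+\sum_Q\frac{\mu(Q)}{\mu(R)}|\langle b\rangle_Q-\langle b\rangle_R|^2\,\langle \|\langle W^{-1}\rangle_R^{\frac12}W^{\frac12}\|^2\rangle_R .
\]
The last average is $\lesssim_{[W]_{A_2}}1$ by \eqref{matrix-A_2} (or by the reverse H\"older inequality \eqref{reverseholder}). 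For the first term, I would apply H\"older with the exponent $s$ of \eqref{reverseholder}, bounding $|\langle b\rangle_P-\langle b\rangle_R|$ by the function $M_R(b-\langle b\rangle_R)$, a localized dyadic maximal function. The $L^{t}(R)$ bounds for the maximal function then yield
\[
\|a'_{(R,k)}\|_{\ell^2_W}\lesssim \|b-\langle b\rangle_R\|_{\text{\L}^r(R)},
\]
with $r=2s/(s-2)$ large. This is the same quantity, up to replacing $R$ by $B_R$, that appears in $\mathrm{Osc}_r$.

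\textbf{Summation over generations.} Next I pass from the index set $\mathscr D^*_{i,j}$ to $\mathscr D$. Each set $R$ appears with at most $1+i\wedge j$ levels $k$, and the bound above depends only on $R$ as a set. Hence the $\ell^{p,q}$ norm over $\mathscr D^*_{i,j}$ of a sequence repeated at most $m=1+i\wedge j$ times is at most $m^{1/p}$ times the $\ell^{p,q}$ norm over distinct cubes. This follows because the distribution function is multiplied by at most $m$. With \eqref{osc-r}, this gives $(1+i\wedge j)^{\frac1p}\|b\|_{\mathrm{Osc}^{p,q}}$.

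If the lower dimension $d>0$, a cube cannot persist as the same set over many levels uniformly. More precisely, $k_{\max}(R)-k_{\min}(R)$ is bounded, since $\mu(B(z,C_0\delta^k))/\mu(B(z,c_0\delta^{k'}))\gtrsim\delta^{-(k'-k)d}$ is incompatible with equality of the sets. So $m\lesssim1$ and the factor disappears.

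\textbf{Main obstacle.} I expect the main obstacle to be the weight factor in the first step. The matrix $\langle W^{-1}\rangle_Q$ lives on a small child $Q$ while the integral is over a different child $P$. I must avoid losing powers of $\mu(R)/\mu(Q)$, which would depend on the complexity. The remedy I would try first is to exploit the normalisation factor $\mu(Q)$ to cancel $\mu(R)/\mu(Q)$ exactly, as sketched above. If that fails, I would instead insert $\langle W\rangle_R^{\pm\frac12}$ and use the matrix $A_2$ condition on $R$, as in the proof of Lemma~\ref{bimaximal}.
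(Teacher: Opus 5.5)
\textbf{Verdict: genuine gap in the block estimate.} Your overall architecture matches the paper: Proposition~\ref{square} applied to $\mathbb S_{\langle b\rangle}$, a block bound $\|a'_{(R,k)}\|_{\ell^2_W}\lesssim(\fint_R|b-\langle b\rangle_R|^t\,d\mu)^{1/t}$ for large $t$, then the repetition factor $(1+i\wedge j)^{1/p}$. The repetition factor and your bounded-multiplicity argument under positive lower dimension are fine. The block estimate itself is where it breaks.

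\textbf{Where the derivation fails.} Step~1, $\langle W^{-1}\rangle_Q\le\frac{\mu(R)}{\mu(Q)}\langle W^{-1}\rangle_R$, costs exactly the factor $\mu(R)/\mu(Q)$ that the normalisation's $\mu(Q)$ then absorbs. Each $(P,Q)$-term becomes $\frac1{\mu(R)}\int_P\|\langle W^{-1}\rangle_R^{1/2}W^{1/2}\|_\infty^2\,d\mu\,|\langle b\rangle_P-\langle b\rangle_Q|^2$, with no weight left in $Q$. Summing, the $P$-part picks up a factor $\#\operatorname{ch}^j(R,k)$. The $Q$-part becomes $\langle\|\langle W^{-1}\rangle_R^{1/2}W^{1/2}\|_\infty^2\rangle_R\sum_Q|\langle b\rangle_Q-\langle b\rangle_R|^2$, with no factor $\mu(Q)/\mu(R)$. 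You cannot insert that factor ``using $\mu(Q)\le\mu(R)$''; that inequality points the wrong way. Both losses are exponential in $j$, which is exactly what must be avoided to keep the threshold $p(\eta)$.

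Your first displayed term is nonetheless correct, for a different reason. After the triangle inequality its $b$-factor is independent of $Q$, so you can sum over $Q$ exactly using $\sum_Q\mu(Q)\langle W^{-1}\rangle_Q=\mu(R)\langle W^{-1}\rangle_R$, with no loss. Your H\"older/maximal-function treatment of that term is then fine; it is a variant of the paper's H\"older in the $P$-sum plus Jensen.

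\textbf{The second displayed term is false.} Summing the $Q$-part exactly over $P$ gives
\[
\sum_Q\frac{\mu(Q)}{\mu(R)}\,|\langle b\rangle_Q-\langle b\rangle_R|^2\,\tau\big(\langle W\rangle_R\langle W^{-1}\rangle_Q\big).
\]
The mixed factor $\tau(\langle W\rangle_R\langle W^{-1}\rangle_Q)$ is not controlled by $[W]_{A_2}$. For instance, take $i=0$, $N=1$, $w(x)=|x|^\alpha$ with $0<\alpha<1$, $R=[0,1)$, $Q_0=[0,2^{-j})$, $b=1_{Q_0}$ and $|a_{RQR}|^2=\mu(Q)/\mu(R)$. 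Then $\langle w\rangle_R\langle w^{-1}\rangle_{Q_0}\sim2^{j\alpha}$, and the left side of your display exceeds the right side by a factor $\gtrsim2^{j\alpha}$. Your fallback of inserting $\langle W\rangle_R^{\pm1/2}$ leads to this same mixed factor, so it does not close the gap.

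\textbf{The missing idea.} You need to couple the weight with the oscillation of $b$:
\begin{enumerate}
\item bound $\tau(\langle W\rangle_R\langle W^{-1}\rangle_Q)\lesssim\fint_Q\|\langle W\rangle_R^{1/2}W^{-1/2}\|_\infty^2\,d\mu$;
\item apply H\"older in the $Q$-sum with weights $\mu(Q)/\mu(R)$;
\item apply Jensen on each $Q$;
\item apply the reverse H\"older inequality for $W^{-1}$ on $R$.
\end{enumerate}
This is the mirror image of your treatment of the first term, with $W$ and $W^{-1}$ interchanged, and it is what the paper does. The same example shows why only a large exponent works. There $\|a'_{(R,k)}\|_{\ell^2_W}\approx2^{-j(1-\alpha)/2}$, which is not $\lesssim(\fint_R|b-\langle b\rangle_R|^2\,d\mu)^{1/2}\approx2^{-j/2}$. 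It is, however, $\lesssim(\fint_R|b-\langle b\rangle_R|^t\,d\mu)^{1/t}$ once $t\ge2/(1-\alpha)$.
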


 \begin{proof}
As in Proposition~\ref{square}, we find
$$
\begin{aligned}
	\left\|a_{(R, k)}^{\prime}\right\|_{\ell_W^2 } & \lesssim \left(\sum_{\substack{P \in \operatorname{ch}^{i}(R, k) \\
			Q \in \operatorname{ch}^{j}(R, k)}} \tau(
			\langle W\rangle_P \langle W^{-1}\rangle_Q)\frac{\mu(P)\mu(Q)}{\mu(R)^2}\left|\langle b\rangle_{P}-\langle b\rangle_{R}\right|^{2}\right)^{\frac12} \\
	+&\left(\sum_{\substack{P \in \operatorname{ch}^{i}(R, k) \\
			Q \in \operatorname{ch}^{j}(R, k)}} \tau(
			\langle W\rangle_P \langle W^{-1}\rangle_Q)\frac{\mu(P)\mu(Q)}{\mu(R)^2}\left|\langle b\rangle_{Q}-\langle b\rangle_{R}\right|^{2}\right)^{\frac12}\\
		&=: \text{I}+\text{I}\text{I} .
\end{aligned}
$$
For term I, we have
\begin{align*}
	\text{I}^2&=\sum_{\substack{P \in \operatorname{ch}^{i}(R, k) \\
			Q \in \operatorname{ch}^{j}(R, k)}} \int_P\int_Q\tau(W^{\frac12}(x) W^{-1}(y)W^{\frac12}(x))d\mu(x)d\mu(y)\frac{1}{\mu(R)^2}\left|\langle b\rangle_{P}-\langle b\rangle_{R}\right|^{2}\\
				&=\sum_{\substack{P \in \operatorname{ch}^{i}(R, k) }} \int_P\tau( W^{\frac12}(x)\langle W^{-1}\rangle_R W^{\frac12}(x))d\mu(x)\frac{1}{\mu(R)}\left|\langle b\rangle_{P}-\langle b\rangle_{R}\right|^{2}.
		\end{align*}
	Since $\tau( W^{\frac12}(x)\langle W^{-1}\rangle_R W^{\frac12}(x))=\tau( |W^{\frac12}(x)\langle W^{-1}\rangle_R^{\frac12}|^2)\lesssim_d\|W^{\frac12}(x)\langle W^{-1}\rangle_R^{\frac12}\|^2_{\infty},$ it follows that 
	\begin{align*}\text{I}^2
		&\lesssim_d\sum_{\substack{P \in \operatorname{ch}^{i}(R, k) }} \int_P \|W^{\frac12}(x)\langle W^{-1}\rangle_R^{\frac12}\|^2_{\infty}d\mu(x)\frac{1}{\mu(R)}\left|\langle b\rangle_{P}-\langle b\rangle_{R}\right|^{2}\\
		&=\sum_{\substack{P \in \operatorname{ch}^{i}(R, k) }} \frac1{\mu(P)}\int_P \|W^{\frac12}(x)\langle W^{-1}\rangle_R^{\frac12}\|^2_{\infty}d\mu(x)\frac{\mu(P)}{\mu(R)}\left|\langle b\rangle_{P}-\langle b\rangle_{R}\right|^{2}.
\end{align*}
Note that \begin{align}\label{bP}\mu(P)\Big(\frac1{\mu(P)}\int_P \|W^{\frac12}(x)\langle W^{-1}\rangle_R^{\frac12}\|^{2}_{\infty}d\mu(x)\Big)^r&\leq\mu(P)\frac1{\mu(P)}\int_P \|W^{\frac12}(x)\langle W^{-1}\rangle_R^{\frac12}\|^{2r}_{\infty}d\mu(x)\nonumber\\
	&= \int_P \|W^{\frac12}(x)\langle W^{-1}\rangle_R^{\frac12}\|^{2r}_{\infty}d\mu(x)
	\end{align}
and similarly 
\begin{equation}\label{br}\mu(P)\left|\langle b\rangle_{P}-\langle b\rangle_{R}\right|^{2 r^{\prime}}\leq \int_P|b-\langle b\rangle_{R}|^{2 r^{\prime}}. 
	\end{equation}
For $r\in(1,\infty)$, H\"older's inequality gives
\[
\begin{aligned}
\mathrm I^2
&\lesssim_d
\left(\sum_{P\in\operatorname{ch}^i(R,k)}
 \frac{\mu(P)}{\mu(R)}
 |\langle b\rangle_P-\langle b\rangle_R|^{2r'}\right)^{1/r'}\\
&\qquad{}\times
\left(\sum_{P\in\operatorname{ch}^i(R,k)}
 \frac{\mu(P)}{\mu(R)}
 \left(\fint_P
 \|W^{1/2}(x)\langle W^{-1}\rangle_R^{1/2}\|_\infty^2
 \,d\mu(x)\right)^r\right)^{1/r}.
\end{aligned}
\]
Applying \eqref{br} to the first factor and \eqref{bP} to the second, and using that the cubes in $\operatorname{ch}^i(R,k)$ partition $R$ up to a null set, we obtain
\[
\begin{aligned}
\mathrm I^2
&\lesssim_d
 \left(\fint_R |b-\langle b\rangle_R|^{2r'}\,d\mu\right)^{1/r'}
 \left(\fint_R
 \|W^{1/2}(x)\langle W^{-1}\rangle_R^{1/2}\|_\infty^{2r}
 \,d\mu(x)\right)^{1/r}.
\end{aligned}
\]
Choose $2r=s>2$ so that the reverse H\"older inequality \eqref{reverseholder} applies to $W^{-1}$.  Hence
\[
\mathrm I^2\lesssim_{[W]_{A_2}}
 \left(\fint_R |b-\langle b\rangle_R|^{2r'}\,d\mu\right)^{1/r'}.
\]

Since the same argument applies to the term II with the roles of $W$ and $W^{-1}$ reversed, we obtain the claimed
$$
 \mathrm{II}^2 \lesssim_{[W]_{A_2}} \left(\frac1{\mu(R)}\int_R|b-\langle b\rangle_R|^{2r'}\,d\mu\right)^{\!1/r'}.
$$
Combining the bounds on $\mathrm{I}$ and $\mathrm{II}$, we arrive at
\[
 \|a'_{(R,k)}\|_{\ell^2_W}
 \leq \mathrm{I}+\mathrm{II}
 \lesssim_{[W]_{A_2}}
 \left(\fint_R |b-\langle b\rangle_R|^{2r'}\,d\mu\right)^{\!1/(2r')}
 =: c_R.
\]
Now we apply Proposition~\ref{square} to the shift $\mathbb{S}_{\langle b\rangle}=\mathbb{S}_{a'}$:
\[
 \|\mathbb{S}_{\langle b\rangle}\|_{S^{p,q}(L^2(W))}
 \leq \left(C\delta^{-(i\wedge j)\Delta}\right)^{(1/p-1/2)_{++}}
 \|\{c_R\}_{(R,k)\in\mathscr{D}^*_{i,j}}\|_{\ell^{p,q}}.
\]
Each value $c_R$ appears in $\|\cdot\|_{\ell^{p,q}}$ with multiplicity $\#\mathcal{K}_{i,j}(R)\leq \min(i,j)+1$ (cf.\ the definition of $\mathscr{D}^*_{i,j}$). By a standard repetition estimate for Lorentz norms \cite[Lemma F.3.4]{HvNVW2}, this multiplicity contributes a factor of $(\min(i,j)+1)^{1/p}$, so that
\[
 \|\{c_R\}_{(R,k)\in\mathscr{D}^*_{i,j}}\|_{\ell^{p,q}}
 \leq (\min(i,j)+1)^{1/p}
 \|\{c_R\}_{R\in\mathscr{D}}\|_{\ell^{p,q}}.
\]
Since $\|\{c_R\}_{R\in\mathscr{D}}\|_{\ell^{p,q}}=\|b\|_{\mathrm{Osc}^{p,q}_{2r'}(\mathscr{D})}\approx\|b\|_{\mathrm{Osc}^{p,q}(\mathscr{D})}$ by \eqref{osc-r}, we conclude that
\[
 \|\mathbb{S}_{\langle b\rangle}\|_{S^{p,q}(L^2(W))}
 \lesssim \left(C\delta^{-(i\wedge j)\Delta}\right)^{(1/p-1/2)_{++}}(1+i\wedge j)^{1/p}
 \|b\|_{\mathrm{Osc}^{p,q}(\mathscr{D})},
\]
as claimed. If $(X,\rho,\mu)$ has positive lower dimension $d>0$, then by Lemma~\ref{dyadiccube} each $Q\in\mathscr{D}$ appears in exactly one level $\mathscr{D}_k$, so the repetition count is $1$ and the factor $(1+i\wedge j)^{1/p}$ is absent.
\end{proof}

\begin{proof}[Proof of Theorem~\ref{5.2}]
By \cite[Lemma~14.9]{Hyto}, the commutator of a dyadic shift $\mathbb S$ with
the pointwise multiplier $b$ admits the decomposition
\[
 [b,\mathbb S]
 =
 [\Pi_b,\mathbb S]
 +
 \sum_{1\leq \alpha<M}[\Gamma_b^\alpha,\mathbb S]
 +
 \mathbb S_{\langle b\rangle},
\]
where $\mathbb S_{\langle b\rangle}$ is the shift with coefficients
\[
 a'_{PQR}=a_{PQR}(\langle b\rangle_P-\langle b\rangle_Q).
\]

We first estimate the commutators on the right-hand side. Since $S^{p,q}$ is a two-sided
operator ideal and $\mathbb S$ is bounded on $L^2(W)$ and  the number of indices $\alpha$ is finite and depends only on the structural
constants, Proposition~\ref{4.1}
gives
$$
 \|[\Pi_b,\mathbb S]\|_{S^{p,q}(L^2(W))}+\sum_{1\leq\alpha<M}
 \|[\Gamma_b^\alpha,\mathbb S]\|_{S^{p,q}(L^2(W))}
  \lesssim
 \|b\|_{\mathrm{Osc}^{p,q}(\mathscr D)}.
$$

Finally, Corollary~\ref{5.6} yields
\[
 \|\mathbb S_{\langle b\rangle}\|_{S^{p,q}(L^2(W))}
 \lesssim
 \delta^{-(i\wedge j)\Delta(\frac1p-\frac12)_{++}}
 (1+i\wedge j)^{1/p}
 \|b\|_{\mathrm{Osc}^{p,q}(\mathscr D)}.
\]
Combining the preceding estimates and absorbing the paraproduct-type terms into
the right-hand side gives the claimed estimate, since
\[
 \delta^{-(i\wedge j)\Delta(\frac1p-\frac12)_{++}}(1+i\wedge j)^{1/p}\geq 1.
\]
If $(X,\rho,\mu)$ has positive lower dimension, the factor
$(1+i\wedge j)^{1/p}$ is omitted exactly as in Corollary~\ref{5.6}.
\end{proof}

\section{The upper bound for Calder\'on--Zygmund commutators}\label{upper}
In this section we establish the upper bound for the Schatten--Lorentz norm of a general Calder\'on--Zygmund commutator. The idea is to decompose $T$ by means of the dyadic representation theorem and to apply the paraproduct and shift estimates obtained in Sections \ref{para} and \ref{shift}.
To begin with, we recall the dyadic representation theorem from \cite{HytonenA2} (see also \cite[\S16]{Hyto} for the formulation used here).
\begin{theorem}[Dyadic representation; \cite{HytonenA2,Hyto}]\label{representation}
 Let $(X, \rho, \mu)$ be a space of homogeneous type, and let $T \in \mathscr{L}\left(L^{2}(\mu)\right)$ have an $\omega$-Calder\'on--Zygmund kernel. Then $T$ has a representation, in the sense of bilinear forms on $L^{2}(\mu)$,
 \begin{align*}
		T=T_{1} & +\mathbb{E}\left(\Pi_{T 1}+\Pi_{T^{*} 1}^{*}\right) \\
		& +\mathbb{E} \sum_{m=0}^{\infty} \omega\left(\delta^{m}\right)\left\{\mathbb{S}_{m, m}+\sum_{i=0}^{m-1}\left(\mathbb{S}_{m, i}+\mathbb{S}_{i, m}\right)+\sum_{j=1}^{m}\left(\mathbb{S}_{0, j}+\mathbb{S}_{j, 0}\right)\right\},
 \end{align*}
 	where
 	\begin{enumerate}
 		\item $T_{1}$ is an operator of rank at most one given by	$$	T_{1}= \begin{cases}\mu(X)^{-2}\langle T1,1\rangle 1 \otimes 1, & \text { if } \mu(X)<\infty \\ 0, & \text { if } \mu(X)=\infty.\end{cases}	$$
 		\item Each $\mathbb{S}_{m, n}$ is a (sum of $O(1)$ many) normalised dyadic shift(s) of complexity $(m, n)$ in the sense of Definition~\ref{lab-S}.
 		\item Each $\Pi_{b}$ is a dyadic paraproduct with symbol $b\in\{T1,T^*1\}$, i.e.,	$$	\Pi_{b}=\sum_{Q \in \mathscr{D}} \mathbb{D}_{Q} b \otimes \frac{\chi_Q}{\mu(Q)}.	$$
 	\end{enumerate}
\end{theorem}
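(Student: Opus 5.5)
The plan is to follow the random dyadic approach of \cite{HytonenA2}, in the formulation of \cite[\S16]{Hyto}, since the theorem is only a structural statement about $T$ and involves no weight. First I fix a probability space of random dyadic systems $\mathscr D=\mathscr D(\omega)$ on $(X,\rho,\mu)$, in the sense of Hyt\"onen--Kairema, with parameters $\delta,c_0,C_0$. These have the property that, for a fixed point and scale, the probability that the point lies close to the boundary of a cube of that scale is small. Take $f,g\in L^2(\mu)$ and expand them in the (possibly vector-valued) Haar system $\{h_Q^\alpha\}$ of $\mathscr D$:
\[
f=\mathbb E_{X}f+\sum_{Q}\sum_\alpha\langle f,h_Q^\alpha\rangle h_Q^\alpha .
\]
The term $\mathbb E_X f$ is present only if $\mu(X)<\infty$, and it is what produces the rank-one operator $T_1$. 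Then
\[
\langle Tf,g\rangle=\langle T_1f,g\rangle+\sum_{P,Q}\langle Th_P,h_Q\rangle\langle f,h_P\rangle\overline{\langle g,h_Q\rangle},
\]
and I average this identity over $\omega$.

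Next I split the double sum by relative size, treating $\ell(P)\le\ell(Q)$ and the symmetric case alike. Following the good/bad cube technique, I use the randomness to insert the restriction that the smaller cube is \emph{good*}: relative to every dyadic cube $R$ of much larger scale, it stays at distance $\gtrsim\ell(P)^{\gamma}\ell(R)^{1-\gamma}$ from $\partial R$. Because goodness is independent of the position of the cube, this costs only a constant factor in expectation. The pairs $(P,Q)$ then fall into three groups.
\begin{enumerate}[(a)]
\item \emph{Separated pairs.} The kernel smoothness \eqref{eq:CZomega} and the cancellation of $h_P$ give $|\langle Th_P,h_Q\rangle|\lesssim \omega(\delta^m)\sqrt{\mu(P)\mu(Q)}/\mu(R)$, where $R$ is the minimal common ancestor. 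Goodness forces $R$ to be at most $m$ generations above the cubes.
\item \emph{Nested pairs with $P\subsetneq Q$.} Here I write $h_Q=\langle h_Q\rangle_{Q_P}+(h_Q-\langle h_Q\rangle_{Q_P})$, where $Q_P$ is the child of $Q$ containing $P$. The constant parts resum into $\Pi^*_{T^*1}$ (or $\Pi_{T1}$ in the symmetric case). The remaining parts are off-support and are estimated by the kernel as in (a), again with the factor $\omega(\delta^m)$.
\item \emph{Adjacent or equal pairs of comparable size.} These use only the size bound \eqref{eq:CZ} and $L^2$-boundedness of $T$, and they are collected into $\mathbb S_{0,j},\mathbb S_{j,0}$ and $\mathbb S_{0,0}$.
\end{enumerate}

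After this, I regroup all pairs with a common ancestor $R$, with $P\in\operatorname{ch}^i(R,k)$ and $Q\in\operatorname{ch}^j(R,k)$, into components $A_{(R,k)}$. Normalising by $\omega(\delta^m)$, where $m=\max(i,j)$, gives normalised shifts $\mathbb S_{i,j}$ in the sense of Definition~\ref{lab-S}. Finally I remove the goodness restriction by averaging, which yields the stated expectation formula.

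I expect the main obstacle to be step (a), together with the bookkeeping in the regrouping. On a space of homogeneous type one must show that goodness plus separation really bounds the generation gap by the complexity $m$. One must also get the decay $\omega(\delta^m)$ uniformly, which requires comparing $V(x,y)$ with $\mu(R)$ through the doubling property. On top of this, a set $R$ may belong to several levels $\mathscr D_k$, so the pairs $(R,k)$ must be tracked to avoid double counting. To settle these points I would follow the specific estimates of \cite[\S16]{Hyto}.
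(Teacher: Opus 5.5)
The paper does not prove this theorem. It quotes it from \cite{HytonenA2} and \cite[\S16]{Hyto}, and your overall route (random dyadic systems, Haar expansion, good/bad cubes, the separated/nested/adjacent split, paraproducts from the constant parts of nested pairs) is the route of those sources. The problem is that the steps meant to produce the \emph{precise shape} of the formula do not produce it.

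In (b), the off-support part is not localised to $Q$. Your remainder $\phi:=h_Q-\langle h_Q\rangle_{Q_P}$ vanishes on $Q_P$ but equals the constant $-\langle h_Q\rangle_{Q_P}$ on all of $X\setminus Q$. Integrating \eqref{eq:CZomega} over dyadic annuli around $P$ therefore gives $|\langle Th_P,\phi\rangle|\lesssim\sqrt{\mu(P)/\mu(Q)}\,\sum_{m\geq m_0}\omega(\delta^m)$, where $\delta^{m_0}\approx\ell(P)/\operatorname{dist}(P,X\setminus Q_P)$. This is a \emph{tail} of $\sum_m\omega(\delta^m)$, not a single factor $\omega(\delta^m)$.

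Splitting this tail annulus by annulus is exactly what creates the group $\omega(\delta^m)\sum_{j=1}^m(\mathbb S_{0,j}+\mathbb S_{j,0})$ in the statement. Shifts of complexity $(j,0)$ enter with weight $\sum_{m\geq j}\omega(\delta^m)$, which for a general $\omega$ is not $\lesssim\omega(\delta^j)$. For example, take $\omega(t)=(1+\log\frac1t)^{-2}$. Your regrouping (``normalise by $\omega(\delta^{\max(i,j)})$'') never generates this group. Nor can the $\mathbb S_{0,j},\mathbb S_{j,0}$ with large $j$ come from (c). Pairs in (c) have bounded complexity: for non-nested adjacent pairs with the smaller cube good, the minimal common ancestor lies at most about $r$ generations up. Those terms are absorbed into the terms with $m\lesssim r$.

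In (a), the goodness you impose does not give the claimed bound either. Suppose $R=P\vee Q$ lies $m$ generations above $P$. Goodness of $P$ relative to the child of $R$ containing $P$ only gives $\operatorname{dist}(P,Q)\gtrsim\ell(P)^{\gamma}\ell(R)^{1-\gamma}$. Equivalently, for pairs at distance $\approx\delta^{-m}\ell(P)$, $R$ may lie up to about $m/(1-\gamma)$ generations above $P$, not $m$. Two losses follow:
\begin{enumerate}
\item \eqref{eq:CZomega} only yields the factor $\omega(\delta^{(1-\gamma)m})$.
\item Comparing $V(x,y)$, which is only $\gtrsim\mu(B(z_P,\operatorname{dist}(P,Q)))$, with $\mu(R)$ costs a doubling factor up to $(\ell(R)/\operatorname{dist}(P,Q))^{D}\lesssim\delta^{-\gamma Dm}$, where $D$ is an upper dimension.
\end{enumerate}
So you get $\omega(\delta^{(1-\gamma)m})\delta^{-\gamma Dm}$ instead of $\omega(\delta^m)$.

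For $\omega(t)=t^\eta$ this is $\delta^{m(\eta-\gamma(\eta+D))}$. That would still suffice for Theorem~\ref{main}, because $p>p(\eta)$ is an open condition and $\gamma$ may be taken small. But it is not the stated coefficient, and for a general $\omega$ the exponential loss is not absorbed at all. This is precisely the step you flag as the main obstacle and leave to \cite[\S16]{Hyto}, so your sketch does not establish it.

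A smaller slip: when $\mu(X)<\infty$, your expansion of $\langle Tf,g\rangle$ drops the cross terms $\langle T\mathbb E_Xf,\,g-\mathbb E_Xg\rangle$ and $\langle T(f-\mathbb E_Xf),\,\mathbb E_Xg\rangle$. Without them, the constant parts in (b) resum to factors $\langle g\rangle_P-\langle g\rangle_X$ instead of $\langle g\rangle_P$. You would then obtain the paraproducts only up to a rank-one remainder.
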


\begin{lemma}\label{T1lemma}
Let $W\in A_2$, $b\in L^1_{\mathrm{loc}}(\mu)$, $p\in(0,\infty)$, and
$q\in(0,\infty]$. Then
\[
\|[b,T_1]\|_{S^{p,q}(L^2(W))}
\lesssim
\|b\|_{\mathrm{Osc}^{p,q}}.
\]
\end{lemma}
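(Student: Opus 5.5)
If $\mu(X)=\infty$, then $T_1=0$ and there is nothing to prove. So the plan is to assume $\mu(X)<\infty$ and exploit the fact that $[b,T_1]$ has rank at most two. For such operators all the quasi-norms $S^{p,q}$ are comparable to the operator norm, with constants depending only on $p,q$. Hence it suffices to bound $\|[b,T_1]\|_{\mathscr L(L^2(W))}$ by a single term of the sequence defining $\|b\|_{\mathrm{Osc}^{p,q}}$.

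\emph{Step 1: reduce to a mean-zero symbol.} Write $c_T:=\mu(X)^{-2}\langle T1,1\rangle$. Since $T\in\mathscr L(L^2(\mu))$,
\[
|\langle T1,1\rangle|\le\|T\|\,\mu(X), \qquad\text{so}\qquad |c_T|\lesssim\mu(X)^{-1}.
\]
Here $T_1$ acts on $\mathbb C^N$-valued functions by $f\mapsto c_T\big(\int_X f\,d\mu\big)\mathbf 1$. Constants commute with $T_1$, so with $c:=\langle b\rangle_X$,
\[
[b,T_1]=[b-c,T_1]=c_T\big((b-c)\otimes 1-1\otimes\overline{(b-c)}\big),
\]
acting componentwise. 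It therefore suffices to bound the two rank-one pieces on $L^2(W)$.

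\emph{Step 2: weighted norm of a rank-one piece.} Put $g=W^{1/2}f$. Inserting $\langle W^{-1}\rangle_X^{\pm1/2}$ as in Lemma~\ref{bimaximal},
\[
\Big\|W^{\frac12}(x)(b(x)-c)\int_X W^{-\frac12}g\Big\|_{L^2}
\le\Big(\int_X|b-c|^2\,\big\|W^{\frac12}(x)\langle W^{-1}\rangle_X^{\frac12}\big\|_\infty^2\,d\mu\Big)^{\frac12}\,
\Big|\langle W^{-1}\rangle_X^{-\frac12}\int_X W^{-\frac12}g\Big|.
\]
By Cauchy--Schwarz, the last factor is at most $\mu(X)^{1/2}\|g\|_{L^2}$. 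For the first factor, apply H\"older with exponents $r,r'$ and then the reverse H\"older inequality \eqref{reverseholder} with $2r=s$, exactly as in the proof of Corollary~\ref{5.6}. This bounds it by
\[
\mu(X)^{\frac12}\Big(\fint_X|b-c|^{2r'}\,d\mu\Big)^{\frac1{2r'}}.
\]
The second piece $1\otimes\overline{(b-c)}$ is handled symmetrically, with $W$ and $W^{-1}$ interchanged; this is where $W^{-1}\in A_2$ enters. Multiplying by $|c_T|\lesssim\mu(X)^{-1}$, the factors of $\mu(X)$ cancel and
\[
\|[b,T_1]\|_{S^{p,q}(L^2(W))}\lesssim\Big(\fint_X|b-\langle b\rangle_X|^{2r'}\,d\mu\Big)^{\frac1{2r'}}.
\]

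\emph{Step 3: compare with the Osc norm.} Since $\mu(X)<\infty$, $X$ is bounded. For $k$ sufficiently negative, $X$ itself is a cube $Q\in\mathscr D_k$; for this $Q$ we have $B_Q=X$ as a set. The triangle inequality gives
\[
\Big(\fint_X|b-\langle b\rangle_X|^{2r'}\Big)^{\frac1{2r'}}\le 2\inf_{c'}\Big(\fint_{B_Q}|b-c'|^{2r'}\Big)^{\frac1{2r'}}.
\]
The right-hand side is (twice) one entry of the sequence defining $\|b\|_{\mathrm{Osc}^{p,q}_{2r'}}$, so it is dominated by that norm. By \eqref{osc-r}, this is $\approx\|b\|_{\mathrm{Osc}^{p,q}}$, which proves the lemma.

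The main point needing care is Step 2. The weight does not commute with the rank-one operator, so the matrix averages must be inserted exactly as in Lemma~\ref{bimaximal} and Corollary~\ref{5.6} to make the $A_2$ characteristic appear. The remaining steps are routine bookkeeping.
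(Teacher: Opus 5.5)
Your proof is correct and follows essentially the same route as the paper. You make the rank-two reduction after subtracting $\langle b\rangle_X$, apply H\"older plus reverse H\"older on $X$ (with $W$ and $W^{-1}$ swapped for the second piece), and compare with the top-cube entry of the $\mathrm{Osc}^{p,q}_{2r'}$ sequence via \eqref{osc-r}. The only cosmetic difference is that you bound the operator norm by Cauchy--Schwarz, whereas the paper bounds the Hilbert--Schmidt norm via Lemma~\ref{kernel} and a trace; these are equivalent for operators of rank at most two.
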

\begin{proof}
If $\mu(X)=\infty$, then $T_1=0$. Assume therefore that $\mu(X)<\infty$. Then
\[
[b,T_1]=\frac{\langle T1,1\rangle}{\mu(X)^2}(b\otimes 1-1\otimes b)
=\frac{\langle T1,1\rangle}{\mu(X)^2}\bigl((b-\langle b\rangle_X)\otimes 1
-1\otimes (b-\langle b\rangle_X)\bigr).
\]
Since this operator has rank at most two, its $S^{p,q}$ quasi-norm is controlled,
up to a constant depending only on $p$ and $q$, by its Hilbert--Schmidt norm. We
estimate the first rank-one term; the second is identical with $W$ and $W^{-1}$
interchanged. By Lemma~\ref{kernel},
\begin{align*}
&\left\|W^{1/2}\frac{\langle T1,1\rangle}{\mu(X)^2}
(b-\langle b\rangle_X)\otimes 1\, W^{-1/2}\right\|_{S^2(L^2(\mu,\mathbb C^N))}^2
\\
&\qquad\lesssim
\tau\left(
\frac1{\mu(X)}\int_X |b-\langle b\rangle_X|^2 W\,d\mu\,
\frac1{\mu(X)}\int_X W^{-1}\,d\mu
\right),
\end{align*}
where we used
$|\langle T\mu(X)^{-1/2},\mu(X)^{-1/2}\rangle|\leq \|T\|_{\mathscr L(L^2(\mu))}$.
Choosing $r>1$ so that the reverse H\"older inequality applies and using
H\"older's inequality, the last trace is bounded by
\[
\left(\fint_X |b-\langle b\rangle_X|^{2r'}\,d\mu\right)^{1/r'}
\left(\fint_X \|W^{1/2}\langle W^{-1}\rangle_X^{1/2}\|_\infty^{2r}\,d\mu\right)^{1/r}
\lesssim
\left(\fint_X |b-\langle b\rangle_X|^{2r'}\,d\mu\right)^{1/r'}.
\]
When $\mu(X)<\infty$, the connected dyadic system contains $X$ as a top cube, and
therefore the final expression is controlled by $\|b\|_{\mathrm{Osc}^{p,q}}^2$,
using \eqref{osc-r}. Taking square roots and treating the second rank-one term in
the same way proves the lemma.
\end{proof}

\begin{proposition}\label{upperbound}
Let $(X,\rho,\mu)$, $W$, $T$, and $b$ be as in Theorem~\ref{main}. If
$q\in[1,\infty]$ and
\[
    p>p(\eta):=\max\Big\{1,\big(\tfrac{\eta}{\Delta}+\tfrac12\big)^{-1}\Big\},
\]
then
\[
\|[b,T]\|_{S^{p,q}(L^2(W))}
\lesssim
\|b\|_{\mathrm{Osc}^{p,q}}.
\]
\end{proposition}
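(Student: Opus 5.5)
We apply the dyadic representation theorem (Theorem~\ref{representation}) with $\omega(t)=t^\eta$ and commute both sides with $b$:
\[
[b,T]=[b,T_1]+\mathbb E\big([b,\Pi_{T1}]+[b,\Pi^*_{T^*1}]\big)+\mathbb E\sum_{m=0}^\infty\delta^{m\eta}\Big\{[b,\mathbb S_{m,m}]+\sum_{i<m}\big([b,\mathbb S_{m,i}]+[b,\mathbb S_{i,m}]\big)+\sum_{j=1}^m\big([b,\mathbb S_{0,j}]+[b,\mathbb S_{j,0}]\big)\Big\}.
\]
The identity holds as bilinear forms on $L^2(\mu)$. To pass it to $L^2(W)$ we first restrict to nice test functions, for instance bounded functions with bounded support, which are dense in $L^2(W)$ when $W\in A_2$. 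If the right-hand side converges absolutely in $S^{p,q}(L^2(W))$, the identity then extends to an operator identity. Since $q\ge1$ and $p>1$, $S^{p,q}$ can be equipped with an equivalent norm, so the triangle inequality is available, including for the Bochner-type expectation over random dyadic systems. The implied constants do not depend on the realisation of $\mathscr D$.

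The term $[b,T_1]$ is handled by Lemma~\ref{T1lemma}. For the paraproducts, $T1,T^*1\in\mathrm{BMO}(\mathscr D)$ by the $T(1)$-theorem, so Proposition~\ref{4.1} gives
\[
\|[b,\Pi_{T1}]\|_{S^{p,q}(L^2(W))}\lesssim\|b\|_{\mathrm{Osc}^{p,q}}\|T1\|_{\mathrm{BMO}}.
\]
The adjoint paraproduct is treated in the same way. Note that $[b,\Pi^*_\beta]=-[\bar b,\Pi_\beta]^*$. Also, the adjoint of an operator on $L^2(W)$ corresponds to the $L^2(\mu)$-adjoint on $L^2(W^{-1})$ after conjugation. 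Since $W^{-1}\in A_2$, the same proposition applies. Alternatively, one can note that the NWO argument of Proposition~\ref{4.1} is symmetric in $e_Q,h_Q$.

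For the shifts, each $\mathbb S_{m,n}$ is a sum of $O(1)$ normalised shifts. Theorem~\ref{5.2} then gives the bound
\[
\|[b,\mathbb S_{m,n}]\|_{S^{p,q}(L^2(W))}\lesssim\delta^{-(m\wedge n)\Delta(\frac1p-\frac12)_{++}}(1+m\wedge n)^{1/p}\|b\|_{\mathrm{Osc}^{p,q}}.
\]
In the $m$-th block, $m\wedge n\le m$, and there are $O(m+1)$ shifts. The $m$-th block is therefore bounded by
\[
(1+m)^{1+1/p}\,\delta^{m\eta-m\Delta(\frac1p-\frac12)_{++}}\,\|b\|_{\mathrm{Osc}^{p,q}}.
\]
The sum over $m$ converges if and only if $\eta>\Delta(\frac1p-\frac12)_{++}$.

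For $p>2$ the exponent $(\frac1p-\frac12)_{++}$ is $0$, so the condition holds. For $p<2$ the condition reads $\frac1p<\frac\eta\Delta+\frac12$, i.e.\ $p>(\frac\eta\Delta+\frac12)^{-1}$, which is exactly the hypothesis $p>p(\eta)$. For $p=2$ the condition $2>(\frac\eta\Delta+\frac12)^{-1}$ always holds since $\eta>0$, and we choose $\varepsilon<\eta/\Delta$ in the definition of $(\cdot)_{++}$. We stress that $\varepsilon$ is fixed here, so the constant from Proposition~\ref{square} is legitimate and no limit $\varepsilon\to0$ is taken.

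The main obstacle is not the summation but justifying the operator identity in $S^{p,q}(L^2(W))$. The representation is only a bilinear-form identity on $L^2(\mu)$ and involves an average over random dyadic systems, while the Schatten class lives on $L^2(W)$. We first prove it for truncated sums of shifts (finitely many $m$) and for $b$ bounded with bounded support, where all operators are bounded on $L^2(W)$. We then pass to the limit using the absolute convergence established above and the Fatou property of $S^{p,q}$ norms under weak operator convergence. Finally, a general $b$ with $\|b\|_{\mathrm{Osc}^{p,q}}<\infty$ is approximated by truncations, whose oscillation norms are controlled uniformly.
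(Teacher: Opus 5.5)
Your proposal is correct and follows the same route as the paper: the dyadic representation theorem, Lemma~\ref{T1lemma} for the finite-rank term, Proposition~\ref{4.1} for the paraproducts, Theorem~\ref{5.2} for the shifts, and summation of the $\delta^{m\eta}$-weighted blocks, which converges exactly when $p>p(\eta)$. Your cruder block bound $(1+m)^{1+1/p}\delta^{-m\Delta(\frac1p-\frac12)_{++}}$ is sufficient, and your choice $\varepsilon<\eta/\Delta$ at $p=2$ is the precise requirement (the paper's ``$\varepsilon\in(0,\eta)$'' should be read this way); your treatment of the adjoint paraproduct, and of the passage from the bilinear-form identity to an operator identity on $L^2(W)$, supplies details the paper leaves implicit.
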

\begin{proof}
Since $p>1$ and $q\geq1$, $S^{p,q}$ is a Banach operator ideal. We apply the
dyadic representation theorem to $T$ and estimate the commutator of $b$ with each
term in the representation.

The finite-rank term is controlled by Lemma~\ref{T1lemma}. For the paraproduct
terms, Proposition~\ref{4.1} gives
\[
\|[b,\Pi_{T1}]\|_{S^{p,q}(L^2(W))}
\lesssim
\|b\|_{\mathrm{Osc}^{p,q}(\mathscr D)}\,\|T1\|_{\mathrm{BMO}(\mathscr D)}
\lesssim
\|b\|_{\mathrm{Osc}^{p,q}(\mathscr D)},
\]
and similarly
\[
\|[b,\Pi^*_{T^*1}]\|_{S^{p,q}(L^2(W))}
\lesssim
\|b\|_{\mathrm{Osc}^{p,q}(\mathscr D)},
\]
because Calder\'on--Zygmund operators map $L^\infty$ to $\mathrm{BMO}$, and the
same is true of $T^*$.

It remains to sum the dyadic-shift contributions. Theorem~\ref{5.2} gives, for
fixed $m$,
\begin{align*}
&\|[b,\mathbb S_{m,m}]\|_{S^{p,q}(L^2(W))}
 +\sum_{i=0}^{m-1}\bigl(\|[b,\mathbb S_{m,i}]\|_{S^{p,q}(L^2(W))}
 +\|[b,\mathbb S_{i,m}]\|_{S^{p,q}(L^2(W))}\bigr) \\
&\quad +\sum_{j=1}^{m}\bigl(\|[b,\mathbb S_{0,j}]\|_{S^{p,q}(L^2(W))}
 +\|[b,\mathbb S_{j,0}]\|_{S^{p,q}(L^2(W))}\bigr) \\
&\lesssim \|b\|_{\mathrm{Osc}^{p,q}}
\begin{cases}
(1+m)^{1/p}\delta^{-m\Delta(1/p-1/2)}, & 1<p<2,\\(1+m)^{1/2}\delta^{-m\Delta\varepsilon},&p=2\\[2mm]
(1+m)^{1+1/p}, & p>2.
\end{cases}
\end{align*}
Here we used $m\wedge i=i$, $0\wedge j=0$, and the elementary estimates for the
resulting geometric or polynomial sums. If the space has positive lower
dimension, the factors $(1+i\wedge j)^{1/p}$ from Theorem~\ref{5.2} are omitted,
and the following argument is unchanged.

Since the kernel has smoothness exponent $\eta$, the representation theorem has
coefficients $\omega(\delta^m)=\delta^{m\eta}$. Hence the shift part is bounded by
\[
\|b\|_{\mathrm{Osc}^{p,q}}
\sum_{m=0}^\infty \delta^{m\eta}
\begin{cases}
(1+m)^{1/p}\delta^{-m\Delta(1/p-1/2)}, & 1<p<2,\\(1+m)^{1/2}\delta^{-m\Delta\varepsilon},&p=2\\[2mm](1+m)^{1+1/p}, &p>2.
\end{cases}
\]
For $p>2$, this series converges for every $\eta>0$. 
For $p=2$, this is still the case, provided that we take $\varepsilon\in(0,\eta)$, as we may, since  $\varepsilon>0$ can be taken as small as we like.
For $1<p<2$, we have convergence under the condition $\eta>\Delta(1/p-1/2)$, equivalently
$p>(\eta/\Delta+1/2)^{-1}$. In each case, this is exactly $p>p(\eta)$. Combining the finite-rank,
paraproduct, and shift estimates proves the proposition.
\end{proof}

\begin{proof}[Proof of Theorem~\ref{main}]
Let $p\in(1,\infty)$ and $q\in[1,\infty]$. The lower estimate
\[
\|b\|_{\mathrm{Osc}^{p,q}}
\lesssim
\|[b,T]\|_{S^{p,q}(L^2(W))}
\]
is Proposition~\ref{lowerbound}. If $p>p(\eta)$, the upper estimate
\[
\|[b,T]\|_{S^{p,q}(L^2(W))}
\lesssim
\|b\|_{\mathrm{Osc}^{p,q}}
\]
is Proposition~\ref{upperbound}. The two assertions together give the claimed
equivalence in the range $p>p(\eta)$.
\end{proof}

\section{Declarations}

\subsection*{AI declaration:} The authors acknowledge the use of AI models in helping polish the paper. No mathematical content, results or proofs came from the AI.

\subsection*{Data availability:} There is no data associated with this paper.

\subsection*{Conflicts of interest:} The authors have no conflicts of interest to declare.


\begin{thebibliography}{10}
	


\bibitem{BL1976}
J.~Bergh, J.~L\"ofstr\"om, \emph{Interpolation spaces. An introduction}, Grundlehren der Mathematischen Wissenschaften, No.~223, Springer-Verlag, Berlin--New York, 1976.

\bibitem{bloom} S.~Bloom, \emph{A commutator theorem and weighted {BMO}}, Trans. Amer. Math. Soc. \textbf{292} (1985), 103--122.





\bibitem{Christ2001}
M.~Christ, M.~Goldberg, \emph{Vector {$A_2$} weights and a
	{H}ardy-{L}ittlewood maximal function}, Trans. Amer. Math. Soc. \textbf{353} (2001), no.~5, 1995--2002.



\bibitem{Coifman1}R.~Coifman, R.~Rochberg, G.~Weiss, \emph{Factorization theorems for {H}ardy spaces in several variables}, Ann. of Math. \textbf{103} (1976), no.~3, 611--635.

\bibitem{fan1}
Z.~Fan, M.~Lacey, J.~Li, and X.~Xiong, \emph{Schatten--Lorentz characterization of Riesz transform commutator associated with {B}essel operators}, J. Funct. Anal. \textbf{290} (2026), no. 2, Paper No. 111233, 52 pp.

\bibitem{Fan2} Z.~Fan, J.~Li, E.~McDonald, F.~Sukochev, and D.~Zanin, \emph{Endpoint weak {S}chatten class estimates and trace formula for commutators of {R}iesz transforms with multipliers on {H}eisenberg groups}, J. Funct. Anal. \textbf{286} (2024), no.~1, Paper No.~110188, 72 pp.



\bibitem{GLW}
Z.~Gong, J.~Li, B.~D.~Wick, \emph{Besov spaces, Schatten classes and weighted versions of the quantised derivative}, Anal. Math. \textbf{49} (2023), no.~4, 971--1006.

\bibitem{Hajlasz}
P.~Haj\l{}asz, \emph{Sobolev spaces on an arbitrary metric space.}
Potential Anal. \textbf{5} (1996), no.~4, 403--415.

\bibitem{HytonenA2}
T.~Hyt\"{o}nen, \emph{The sharp weighted bound for general
	{C}alder\'{o}n-{Z}ygmund operators}, Ann. of Math. (2) \textbf{175} (2012), no.~3, 1473--1506.



	\bibitem{Hyto} T.~Hyt\"{o}nen, \emph{Schatten properties of commutators on metric spaces}, J. Funct. Anal. \textbf{291} (2026), no.~7, Paper No.~111556, 98 pp.

\bibitem{HvNVW2}
T.~Hyt\"onen, J.~van Neerven, M.~Veraar, L.~Weis, \emph{Analysis in {B}anach spaces. {V}ol.~{II}. {P}robabilistic methods and operator theory}, Ergebnisse der Mathematik und ihrer Grenzgebiete, vol.~67, Springer, Cham, 2017.



	\bibitem{Hyto1}
T.~Hyt\"{o}nen, R.~Korte, \emph{Characterisations of Sobolev spaces and constant functions over metric spaces }, arXiv:2508.07801, 2025.


\bibitem{HW}
T.~Hyt\"onen, L.~Wu, \emph{Schatten properties of commutators of fractional integrals on spaces of homogeneous type}, arXiv:2511.02686, 2025.

\bibitem{Isralowitz}
J.~Isralowitz, H.~Kwon, S.~Pott, \emph{Matrix weighted norm inequalities for commutators and paraproducts with matrix symbols}, J. Lond. Math. Soc. \textbf{96} (2017), no.~1, 243--270.

\bibitem{Isralowitz2022}
J.~Isralowitz, S.~Pott, S.~Treil, \emph{Commutators in the two scalar and matrix weighted setting}, J. Lond. Math. Soc. (2) \textbf{106} (2022), no.~1, 1--26.



\bibitem{janson}
S.~Janson and J.~Peetre, \emph{Paracommutators--boundedness and {S}chatten--von {N}eumann properties}, Trans. Amer. Math. Soc. \textbf{305} (1988), no.~2, 467--504.
\bibitem{janson2}
S.~Janson and T.~H.~Wolff, \emph{{S}chatten classes and commutators of singular integral operators}, Ark. Mat. \textbf{20} (1982), no.~2, 301--310.

\bibitem{kree1967}
P.~Kr\'ee, \emph{Interpolation d'espaces vectoriels qui ne sont ni norm\'es, ni complets. {A}pplications}, Ann. Inst. Fourier (Grenoble) \textbf{17} (1967), 137--174.

\bibitem{lacey}
M.~Lacey, J.~Li, and B.~D.~Wick, \emph{{S}chatten classes and commutators in the two weight setting, {I}: {H}ilbert transform}, Potential Anal. \textbf{60} (2024), no.~2, 875--894.


\bibitem{lacey3}
M.~Lacey, J.~Li, B.~D.~Wick, and L.~Wu, \emph{{S}chatten classes and commutators in the two weight setting}, J. Funct. Anal. \textbf{289} (2025), no. 6, Paper No. 111028, 51 pp.



\bibitem{LLSW}
J.~Li, C.-W.~Liang, C.-Y.~Shen, B.~D.~Wick, \emph{Commutators with two matrix weights}, arXiv:2609.17054, 2026.

\bibitem{LXY}
J. Li, X. Xiong and F. Yang, \emph{Schatten properties of Calder\'on--Zygmund singular integral commutator
on stratified Lie groups}, J. Math. Pures Appl. (9) \textbf{188} (2024), 73--113.

\bibitem{merucci}
C.~Merucci, \emph{Interpolation dans $C^\omega(H)$}, C.~R. Acad. Sci. Paris S\'er. A-B \textbf{274} (1972), A1163--A1166.



\bibitem{peller}
V.~Peller, \emph{Hankel operators and their applications}, Springer Monographs in Mathematics, Springer-Verlag, New York, 2003.




\bibitem{Rochberg1}
R.~Rochberg, S.~Semmes, \emph{A decomposition theorem for {BMO} and applications}, J. Funct. Anal. \textbf{67} (1986), no.~2, 228--263.

\bibitem{Rochberg}
R.~Rochberg, S.~Semmes, \emph{Nearly weakly orthonormal sequences, singular value estimates, and {C}alder\'on--{Z}ygmund operators}, J. Funct. Anal. \textbf{86} (1989), no.~2, 237--306.

\bibitem{square2}
S.~Treil, \emph{Mixed $A_2$-$A_\infty$ estimates of the nonhomogeneous vector square function with matrix weights},
Proc. Amer. Math. Soc., \textbf{151} (2023), 3381--3389.


\bibitem{Treil1997}
S.~Treil, A.~Volberg, \emph{Wavelets and the angle between past and future}, J. Funct. Anal. \textbf{143} (1997), no.~2, 269--308.


\bibitem{wei}
Z.~Wei, H.~Zhang, \emph{On the boundedness and Schatten class property of noncommutative martingale paraproducts and operator-valued commutators}, arXiv:2407.14988, 2024.

\end{thebibliography}
\end{document}